\documentclass[12pt]{amsart}

\usepackage{amsmath,amssymb,amsfonts,amscd,pictexwd,dcpic}

\newtheorem{theorem}{Theorem}[section]
\newtheorem{lemma}[theorem]{Lemma}
\newtheorem{proposition}[theorem]{Proposition}
\newtheorem{definition}[theorem]{Definition}
\newtheorem{corollary}[theorem]{Corollary}

\newtheorem{condition}{Condition}

\newcommand{\Z}{{\mathbb Z}}

\newcommand{\id}{\operatorname{id}}
\newcommand{\from}{\colon}

\newcommand{\f}{\bar f}
\begin{document}
\title[Cofinite graphs and groupoids]{Cofinite graphs and groupoids and their profinite completions}

\author{Amrita Acharyya}
\address{Department of Mathematics and Statistics\\
University of Toledo, Main Campus\\
Toledo, OH 43606-3390}
\email{Amrita.Acharyya@utoledo.edu}

\author{Jon M. Corson}
\address{Department of Mathematics\\
University of Alabama\\
Tuscaloosa, AL 35487-0350}
\email{jcorson@ua.edu}

\author{Bikash Das}
\address{Department of Mathematics\\
University of North Georgia, Gainesville Campus\\
Oakwood, Ga. 30566}
\email{Bikash.Das@ung.edu}

\subjclass[2010]{05C63, 54F65, 57M15, 20E18}

\keywords{profinite graph, cofinite graph, profinite group, cofinite group, profinite groupoid, cofinite groupoid, uniform space, completion, cofinite entourage}

\begin{abstract}
We define cofinite graphs and cofinite groupoids in a unified way that extends the notion of  cofinite groups introduced by Hartley. 
The common underlying structure of all these objects is that they are directed graphs endowed with a certain type of uniform structure, that we call a cofinite uniformity. 
We begin by exploring the fundamental theory of cofinite directed graphs in full generality. The general theory turns out to be almost completely analogous to that of cofinite groups. 
For instance, every cofinite directed graph has a unique completion which is a compact cofinite direct graph. Moreover, compact cofinite directed graphs are precisely the profinite directed graphs, i.e., projective limits of finite discrete topological directed graphs.  
We then apply the general theory to directed graphs with additional structure such as graphs (in the sense of Serre) and groupoids, thus leading to the notions of cofinite graphs and cofinite groupoids. 
Cofinite groupoids with only finitely many identities behave much the same as cofinite groups, which are the same thing as cofinite groupoids with a single identity. However the situation for cofinite groupoids with infinitely many identities is more complicated.  
\end{abstract}

\maketitle

%%%%%%%%%%%%%%  Introduction

\section{Introduction} \label{s:Intro}

Embedding an algebraic object, such as a group, ring, or module, into a projective limit of well-behaved objects is a frequently used tactic in algebra and number theory. 
For example, in commutative algebra, polynomial rings are embedded in rings of formal power series. Or more generally, if $R$ is any commutative ring and $I$ is an ideal, then the $I$-adic completion of $R$ is the projective limit of the inverse system of quotient rings $R/I^n$, $n\ge0$. 
The case of $R=\Z$ and $I=(p)$, where $p$ is a prime, yields the $p$-adic integers. These rings are of particular importance in number theory and are instances of projective limits of finite rings, and thus are profinite rings. Similarly, of importance in group theory, any residually  finite group can be embedded in a profinite group (i.e., projective limit of finite groups). 

There is a topological approach to producing such projective limits know as {\it completion\/}.
The rough idea is to impose a suitable topology on the object making it into a topological group (or ring, or  module, etc.) so that Cauchy sequences (or more generally, Cauchy nets) can be defined and used to construct the completion.  By choosing different such topologies, various completions are formed. In the case of a residually finite group, 
Hartley~\cite{bH77} introduced the terminology of cofinite groups for the topological group structures that can be imposed such that the completion is a profinite group. 
Namely, a {\it cofinite group\/} is a topological group $G$ whose open normal subgroups of finite index form a neighborhood base for the identity. 
Our purpose in this paper is to investigate to what extent the technique of profinite completion can be extended to more general structures that may only have at most a partial multiplication; for instance, we consider graphs (in the sense of Serre~\cite{jS80}) and groupoids.

One feature that all the objects that will concern us here have in common is an underlying directed graph structure. (It may be quite simple, as in the case of a group, which we view as a directed graph whose vertex set consists of only the identity element.) 
Thus, we begin by exploring embeddings of directed graphs as dense sub-directed graphs of profinite directed graphs with the goal of developing a general theory that can be applied widely in many situations. 
Initially we note that, without some modification, the topological approach used in the classical situations to construct and distinguish various completions breaks down for directed graphs in general. 
The following easy example illustrates this point. 

\subsubsection*{Example} \label{e:1,2-point compactification} 
Let $\Gamma$ be the directed graph obtained by subdividing the real line at the integer points, so $V(\Gamma)=\Z$, and direct each edge in the increasing direction. 
Incidentally, $\Gamma$ is the Cayley directed graph of the additive group $\Z$ with respect to the generating set $\{1\}$. 
We view $\Gamma$ as a topological directed graph with the discrete topology. 
(Our terminology and notation for directed graphs is fully explained in the next section.)

For each integer $n\ge0$, let $[-n,n]$ denote the connected sub-directed graph of $\Gamma$ with vertex set $\{0,\pm1,\ldots,\pm n\}$. Thus $[-n,n]$ is a combinatorial arc of length $2n$.  
There is a unique retraction $\theta_n\from\Gamma\to[-n,n]$; it is the simplicial map defined on vertices by $\theta(i)=i$ for $-n\le i\le n$, 
$\theta(i)=n$ for $i\ge n$, and $\theta(i)=-n$ for $i\le-n$.
Also form the quotient directed graph $\Gamma_n=[-n,n]/(-n\!=\!n)$ obtained by identifying the vertex $-n$ with the vertex $n$, and let $q_n\from[-n,n]\to\Gamma_n$ denote the natural quotient map. Thus $\Gamma_n$ is a combinatorial circle of length $2n$. Let $\phi_n=q_n\theta_n\from\Gamma\to\Gamma_n$ denote the composite map of directed graphs.  

For integers $0\le m\le n$, let $\theta_{mn}\from[-n,n]\to[-m,m]$ be the restriction of the retraction $\theta_m\from\Gamma\to[-m,m]$, and note that $\theta_{mn}\theta_n=\theta_m$. Also note that $\theta_{mn}$ determines a map of directed graphs $\phi_{mn}\from\Gamma_n\to\Gamma_m$ on the quotient directed graphs, and 
$\phi_{mn}\phi_n=\phi_m$ holds. We have constructed two inverse systems of finite directed graphs and maps of directed graphs indexed by the directed set of nonnegative integers: 
$([-n,n],\theta_{mn})$ and $(\Gamma_n,\phi_{mn})$. Denote their inverse limits by
$\widehat\Gamma_1=\varprojlim\Gamma_n$ and $\widehat\Gamma_2=\varprojlim[-n,n]$.
Endow each of the finite graphs $[-n,n]$ and $\Gamma_n$ with the discrete topology so that  
$\widehat\Gamma_1$ and $\widehat\Gamma_2$ are compact Hausdorff topological directed graphs.

Embed $\Gamma$ in $\widehat\Gamma_1$ via the canonical map of directed graphs determined by the compatible family of maps $\phi_n\from\Gamma\to\Gamma_n$. 
Then it is easy to see that $\widehat\Gamma_1=\Gamma\cup\{\infty\}$ and its underlying topological space is the one-point compactification of the discrete underlying topological space of $\Gamma$. 
Similarly, $\Gamma$ embeds in $\widehat\Gamma_2$ as a dense discrete sub-directed graph and $\widehat\Gamma_2=\Gamma\cup\{-\infty,\infty\}$ is a two-point compactification of $\Gamma$. Thus $\widehat\Gamma_1$ and $\widehat\Gamma_2$ are non-isomorphic topological directed graphs, each containing $\Gamma$ as a dense sub-directed topological directed graph.  
\bigbreak

Thus endowing a directed graph $\Gamma$ with a topological directed graph structure is not enough to uniquely determine a completion of $\Gamma$, and so something more is needed to define a suitable notion of a cofinite directed graph.  
Although, in the  previous example, the relative topology that $\Gamma$ inherits from each $\widehat\Gamma_i$ is the same, namely the discrete topology, the uniform structures they induce on $\Gamma$ are different. 

A profinite directed graph, like any compact Hausdorff space, has a unique uniformity that induces its topology. We explore this uniform structure on  
a profinite directed graph in Section~\ref{s:Profinite},  and characterize it in terms of what we call cofinite entourages (Theorem~\ref{t:Profinite directed graph}). 
Then by analyzing the relative uniform structure induced on a sub-directed graph of a profinite directed graph, a natural notion of cofinite directed graph is discovered 
(Definition~\ref{d:Cofinite directed graph}). As justification for our definition, in Section~\ref{s:Completions} we show that every cofinite graph $\Gamma$ has a unique completion $\overline\Gamma$ (Theorem~\ref{t:Existence and uniqueness}), and it is a profinite directed graph (Theorem~\ref{t:Completion is profinite}). Moreover, profinite directed graphs are precisely the compact cofinite directed graphs.  

In Section~\ref{s:Applications} we conclude that paper with some applications that we are interested in for future reference.  Specifically we consider graphs (in the sense of Serre~\cite{jS80}) and groupoids. These objects are directed graphs with additional structure. We require that the cofinite directed graph structures on them also respect the additional structure. In the case of a graph, it turns out that in general the completion of a cofinite graph is a profinite graph. However for a cofinite groupoid, we need to impose a mild restriction to ensure that the completion is a groupoid.  This condition automatically holds for groupoids with only finitely many identities (such as a group, which has a unique identity), and thus the completion of a cofinite groupoid with a finite vertex set is a profinite groupoid (Theorem~\ref{t:Finite vertex set}).

%%%%%%%%%%%%%%  Preliminaries

\section{Preliminaries} \label{s:Preliminaries}

%%%%%%%%%%%%%%%%  Binary relations

\subsection{Co-discrete equivalence relations} \label{ss:Binary relations}  

We do not distinguish between a binary relation and its graph. Thus, by a {\it binary relation\/} on a set $X$ we mean simply a subset $R\subseteq X\times X$. 
Given a binary relation $R$ on a set $X$ and $a\in X$, we write 
$R[a]=\{x\in X\mid (a,x)\in R\}$. More generally, for any subset $A$ of $X$, we put
$R[A]=\bigcup\{R[a]\mid a\in A\}$.

The following lemma identifies certain equivalence relations on topological spaces that are useful in describing the uniform structure of profinite spaces. Recall that the {\it index\/} of an equivalence relation $R$ on a set $X$ is the cardinality of the set $X/R$ of equivalence classes of $X$ by $R$. 

\begin{lemma} \label{l:Co-discrete}
Let $R$ be an equivalence relation on a topological space~$X$. Then the following conditions are equivalent: 
\begin{enumerate}
\item[(1)] $R$ is an open subset of the product space $X\times X$.
\item[(2)] Every equivalence class $R[a]$ is an open subset of $X$. 
\item[(3)] The quotient space $X/R$ is a discrete space.
\end{enumerate}
Additionally when $X$ is compact, such an equivalence relation $R$ has finite index and is a clopen subset of $X\times X$. 
\end{lemma}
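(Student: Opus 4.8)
The plan is to establish the equivalence of (1), (2), (3) by the cycle $(1)\Rightarrow(2)\Rightarrow(3)\Rightarrow(1)$, using only the definitions of the product topology and the quotient topology, and then to deduce the compact case as an easy consequence of the equivalence.

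For $(1)\Rightarrow(2)$, fix $a\in X$; the map $\iota_a\from X\to X\times X$ given by $x\mapsto(a,x)$ is continuous, and $R[a]=\iota_a^{-1}(R)$, so $R[a]$ is open whenever $R$ is. For $(2)\Rightarrow(3)$, let $\pi\from X\to X/R$ be the canonical surjection; by definition of the quotient topology a subset of $X/R$ is open precisely when its preimage under $\pi$ is open in $X$, and the preimage of the singleton $\{\pi(a)\}$ is exactly the class $R[a]$, which is open by hypothesis, so every point of $X/R$ is open and $X/R$ is discrete. For $(3)\Rightarrow(1)$, observe that $R=(\pi\times\pi)^{-1}(\Delta)$, where $\Delta\subseteq(X/R)\times(X/R)$ is the diagonal; if $X/R$ is discrete then so is $(X/R)\times(X/R)$, hence $\Delta$ is open there, and since $\pi\times\pi$ is continuous it follows that $R$ is open in $X\times X$.

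For the final assertion, suppose $X$ is compact and $R$ satisfies the equivalent conditions. The equivalence classes $R[a]$ form a partition of $X$ into nonempty open sets; since they are pairwise disjoint, no proper subfamily covers $X$, so compactness forces the partition to be finite, i.e.\ $R$ has finite index. Once there are only finitely many classes, each class is also closed, being the complement of the union of the (finitely many) remaining open classes; choosing representatives $a_1,\dots,a_k$, we get $R=\bigcup_{i=1}^k R[a_i]\times R[a_i]$, a finite union of products of clopen sets, hence $R$ is clopen in $X\times X$. There is no serious obstacle here; the only points requiring a little care are the identification of $R$ with $(\pi\times\pi)^{-1}(\Delta)$ in the proof of $(3)\Rightarrow(1)$, and the elementary observation that a partition of a compact space into open pieces must be finite.
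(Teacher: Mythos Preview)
Your proof is correct and follows essentially the same approach as the paper: the implications among (1), (2), (3) are handled via the slice map, the definition of the quotient topology, and the identification $R=\bigcup_a R[a]\times R[a]$ (which the paper uses for $(2)\Rightarrow(1)$, whereas you close the cycle with the equally valid $R=(\pi\times\pi)^{-1}(\Delta)$ for $(3)\Rightarrow(1)$). The compact case is also argued the same way, with the paper phrasing finiteness as ``$X/R$ is compact and discrete, hence finite'' rather than via the open-partition argument, but these are the same observation.
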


\begin{proof}
By the corollary to Proposition 4 of~\cite[Section I.4.2]{nB66}, $(1)\Rightarrow(2)$. 
Being an equivalence relation,   
$$
R=\bigcup_{a\in X}\big(R[a]\times R[a]\big)
$$
which makes it clear that $(2)\Rightarrow(1)$. 
Note also that $(2)\Leftrightarrow(3)$ since points are open in $X/R$ if and only if equivalence classes are open in $X$. 
To show the last part, assume now that $X$ is compact and that $R$ is an equivalence relation on $X$ satisfying the equivalent conditions~(1)--(3). Then the quotient space $X/R$ is compact and discrete, and thus a finite space. Consequently $R$ is of finite index and it is closed in $X\times X$ since it is the finite union of the closed sets 
$R[a]\times R[a]$, for $a\in X$. 
\end{proof}

%%%%%  Directed graphs

\subsection{Directed graphs} \label{ss:Directed graphs}  

A {\it directed graph\/} is a set $\Gamma$ with a distinguished subset $V(\Gamma)$ and two functions $s,t\from\Gamma\to V(\Gamma)$ satisfying $s(x)=t(x)=x$ for all $x\in V(\Gamma)$. 
Each element of $V(\Gamma)$ is called a {\it vertex\/}. 
We denote the complement of $V(\Gamma)$ by 
$E(\Gamma)=\Gamma\setminus V(\Gamma)$ and each $e\in E(\Gamma)$ is called an {\it edge\/} with {\it source\/} $s(e)$ and {\it target\/} $t(e)$. 
Observe that the vertex set $V(\Gamma)$ is determined by the source map $s$, and by the target map $t$: it is precisely the set of fixed points of each of these maps.

\subsubsection*{Sub-directed graphs}
A subset $\Sigma$ of a directed graph $\Gamma$ is called a 
{\it sub-directed graph\/} if it contains the source and target of each of its members. 
Under the restrictions of the source and target maps on $\Gamma$, a sub-directed graph $\Sigma$ is itself a directed graph. 
Note that $V(\Sigma)=V(\Gamma)\cap\Sigma$. 

\subsubsection*{Products of directed graphs}
Let $(\Gamma_i)_{i\in I}$ be a family of directed graphs. We make the Cartesian product 
$\Gamma=\prod_{i\in I}\Gamma_i$ into a directed graph in the following way. 
The vertex set of $\Gamma$ is the set $V(\Gamma)=\prod_{i\in I}V(\Gamma_i)$. 
The source and target maps are defined coordinate-wise: 
if $x=(x_i)\in\Gamma$, then $s(x)=(s(x_i))$ and $t(x)=(t(x_i))$. 
The set $\Gamma$ with this directed graph structure is called a {\it product directed graph}.

\subsubsection*{Maps of directed graphs}
A {\it map of directed graphs\/} is a function $f\from\Gamma\to\Delta$ between directed graphs that preserves sources and targets.
Note that if $f\from\Gamma\to\Delta$ is a map of directed graphs, 
then $f(V(\Gamma))\subseteq V(\Delta)$. For if $x\in V(\Gamma)$, then 
$f(x)=f(s(x))=s(f(x))$ and hence $f(x)\in V(\Delta)$.
However $f$ does not necessarily map edges to edges. Those maps of directed graphs that do map edges to edges are called {\it rigid\/}. 
Compositions of maps of directed graphs are maps of directed graphs. 
Also note that if $f\from\Gamma\to\Delta$ is a map of directed graphs and $\Sigma$ is a 
sub-directed graph of $\Gamma$, then $f(\Sigma)$ is a sub-directed graph of $\Delta$. 

\subsubsection*{Compatible equivalence relations on directed graphs}
A equivalence relation $R$ on a directed graph $\Gamma$ is {\it compatible\/} with the directed graph structure if $(s(x),s(y)\in R$ and $(t(x),t(y))\in R$ whenever $(x,y)\in R$.
In other words, an equivalence  relation $R$ on $\Gamma$ is compatible if and only if $R$ is a sub-directed graph of the product directed graph $\Gamma\times\Gamma$.

If $f\from\Gamma\to\Delta$ is a map of directed graphs, then its kernel 
$$
\ker f=\{ (x,y)\in\Gamma\times\Gamma\mid f(x)=f(y) \}
$$ 
is clearly a compatible equivalence relation on $\Gamma$. 

On the other hand, if $R$ is any compatible equivalence relation on a directed graph $\Gamma$, then the set of equivalence classes $\Gamma/R$ has a unique directed graph structure such that the natural map $\nu\from\Gamma\to\Gamma/R$ is a map of directed graphs. Its source and target maps are given by $s(R[x])=R[s(x)]$ and $t(R[x])=R[t(x)]$, and $V(\Gamma/R)=\nu(V(\Gamma))$. 
Note that in this case $\ker\nu=R$, and hence, every compatible equivalence relation on a directed graph $\Gamma$ is the kernel of some map of directed graphs with domain $\Gamma$. 

Moreover, we have this {\it first isomorphism theorem\/} for directed graphs: 
Let $f\from\Gamma\to\Delta$ be a map of directed graphs and let $K=\ker f$. Then $K$ is a compatible equivalence relation on\/ $\Gamma$ and there is an injective map of directed graphs $f'\from\Gamma/K\to\Delta$ given by $K[x]\mapsto f(x)$, which is an isomorphism when $f$ is surjective.
$$\begindc{\commdiag}[30]
\obj(0,0)[K]{$\Gamma/K$}
\obj(5,5)[Delta]{$\Delta$}
\obj(0,5)[Gamma]{$\Gamma$}
\mor{Gamma}{K}{$\nu$}[\atright,\solidarrow]
\mor{Gamma}{Delta}{$f$}[\atleft,\solidarrow]
\mor{K}{Delta}{$f'$}[\atright,\dashArrow]
\enddc$$

The intersection of any family of compatible equivalence relations on a directed graph $\Gamma$ is again a compatible equivalence relation on $\Gamma$. 
Thus, given any subset $T\subseteq\Gamma\times\Gamma$, the intersection of all compatible equivalence relations on $\Gamma$ containing $T$ is the unique smallest such equivalence relation on $\Gamma$. It is denoted $T^\sharp$ and called the {\it compatible equivalence relation on $\Gamma$ generated by $T$}. 
Alternatively, $T^\sharp$ can be constructed as follows. Let 
$\langle T\rangle= T\cup(s\times s)[T]\cup(t\times t)[T]$, the smallest sub-directed graph of $\Gamma\times\Gamma$ containing $T$. 
Then $T^\sharp=\langle T\rangle^*$, the reflexive and transitive closure of $\langle T\rangle$.

%%%%%%%%%%  The uniform structure of a profinite directed graph

\section{The uniform structure of a profinite directed graph} \label{s:Profinite}

A {\it topological directed graph\/} is a directed graph $\Gamma$ endowed with a topology such that the source and target functions  
$s,t\from\Gamma\to V(\Gamma)$ are continuous maps onto the subspace $V(\Gamma)$.
A {\it profinite graph\/} is a topological group which is the limit of an inverse system of finite discrete topological graphs, or equivalently, a compact Hausdorff topological graph; see, for instance,~\cite{RZ00}. 
Note that the vertex set $V(\Gamma)$ of a profinite directed graph $\Gamma$ is closed, since it is the fixed point set of a continuous map of a Hausdorff space to itself. On the other hand, the edge set $E(\Gamma)=\Gamma\setminus V(\Gamma)$ need not be closed. 

Recall that every compact Hausdorff space $X$ has a unique uniformity that induces its topology. This uniformity consists of all neighborhoods of the diagonal $\delta_X$ in 
$X\times X$; see, for instance,~\cite[II.4.1, Theorem~1]{nB66}. Therefore, as we may, we regard every compact Hausdorff space as a uniform space endowed with the unique uniformity inducing its topology. Our goal in this section is to characterize profinite directed graphs in terms of their unique uniform structures. 

For this purpose, we make use of the following observation. 

\begin{lemma} \label{l:Profinite space}
Let $X$ be a compact, Hausdorff, totally disconnected space and let $W$ be an entourage of $X$. Then there exists an equivalence relation $R$ on $X$ such that $R$ is open in 
$X\times X$ and $R\subseteq W$. 
In particular, $R$ has finite index and is an entourage of $X$. 
\end{lemma}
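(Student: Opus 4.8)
The plan is to start from the entourage $W$ and shrink it down to an open equivalence relation by a standard compactness-plus-total-disconnectedness argument. First I would replace $W$ by a smaller symmetric entourage; since the uniformity of $X$ consists of all neighbourhoods of the diagonal $\delta_X$, I may assume without loss that $W$ is an open neighbourhood of $\delta_X$ in $X\times X$. Now fix a point $a\in X$. Because $X$ is compact Hausdorff and totally disconnected, it has a base of clopen sets; using continuity of the map $x\mapsto(a,x)$ and the fact that $W$ is an open neighbourhood of $(a,a)$, I can choose a clopen set $U_a\ni a$ with $U_a\times U_a\subseteq W$. The family $\{U_a\}_{a\in X}$ is an open cover of the compact space $X$, so finitely many of them, say $U_{a_1},\dots,U_{a_n}$, cover $X$.

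Next I would turn this finite clopen cover into a finite clopen \emph{partition} of $X$. Given clopen sets $U_1,\dots,U_n$ covering $X$, the sets obtained by taking, for each subset $S\subseteq\{1,\dots,n\}$, the intersection $\bigcap_{i\in S}U_i\cap\bigcap_{i\notin S}(X\setminus U_i)$, form a finite partition of $X$ into clopen pieces $P_1,\dots,P_m$, and each $P_j$ is contained in at least one $U_i$ (since the $U_i$ cover $X$). Let $R$ be the equivalence relation on $X$ whose classes are exactly the blocks $P_1,\dots,P_m$; explicitly $R=\bigcup_{j=1}^m P_j\times P_j$. Then $R$ is a finite union of products of clopen sets, hence clopen — in particular open — in $X\times X$, and it has finite index $m$. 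It remains to check $R\subseteq W$: if $(x,y)\in R$ then $x,y$ lie in a common block $P_j$, and $P_j\subseteq U_{a_i}$ for some $i$, so $(x,y)\in U_{a_i}\times U_{a_i}\subseteq W$. Finally, $R$ is an entourage because it is a neighbourhood of the diagonal: each $P_j$ is nonempty, so $\delta_X\subseteq\bigcup_j P_j\times P_j=R$, and $R$ is open.

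The only mildly delicate point — and the step I would flag as the main obstacle — is the first one: producing, for each $a\in X$, a \emph{clopen} neighbourhood $U_a$ of $a$ with $U_a\times U_a\subseteq W$. This is where total disconnectedness is used (via the standard fact that a compact Hausdorff totally disconnected space is zero-dimensional, i.e.\ has a base of clopen sets). Once an \emph{open} neighbourhood $V_a$ of $a$ with $V_a\times V_a\subseteq W$ is found from continuity, one simply shrinks $V_a$ to a clopen $U_a$ with $a\in U_a\subseteq V_a$. The rest of the argument — finite subcover, refinement to a partition, verifying that the associated equivalence relation is open and contained in $W$ — is routine bookkeeping, and the ``in particular'' clause (finite index, entourage) then follows either directly from the construction or by invoking Lemma~\ref{l:Co-discrete} with $X$ compact.
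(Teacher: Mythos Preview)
Your proof is correct and complete. It takes a genuinely different route from the paper's argument, though both rest on the same two ingredients: zero-dimensionality of $X$ (compact Hausdorff totally disconnected spaces have a base of clopen sets) and compactness.

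The paper argues more abstractly on $X\times X$: it lets $I$ be the family of all clopen equivalence relations on $X$, observes that for any $x\neq y$ the two-block relation coming from a clopen set separating $x$ from $y$ lies in $I$, and hence $\bigcap_{R\in I}R=\delta_X$. Then, after shrinking $W$ to an open neighbourhood of $\delta_X$, compactness of $X\times X$ applied to the closed sets $R\setminus W$ yields finitely many $R_1,\dots,R_n\in I$ with $R_1\cap\cdots\cap R_n\subseteq W$, and this intersection is the desired clopen equivalence relation. Your approach instead works directly on $X$: you cover $X$ by clopen $U_a$ with $U_a\times U_a\subseteq W$, pass to a finite subcover, and then refine to a clopen partition whose associated equivalence relation sits inside $W$. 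Your argument is more constructive and is the textbook route to showing that finite clopen partitions form a base for the uniformity; the paper's argument avoids the explicit partition-refinement bookkeeping at the cost of a slightly less transparent use of compactness. Either way the ``in particular'' clause follows immediately, as you note, from Lemma~\ref{l:Co-discrete} or from the construction itself.
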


\begin{proof}
Let $I$ be the set of all equivalence relations $R$ on $X$ such that $R$ is a clopen subset of  $X\times X$. 
Let $x,y\in X$ with $x\ne y$. Since $X$ is Hausdorff and the clopen subsets of $X$ form a base for its topology (see, for example,~\cite[Lemma~0.1.1(c)]{jW98}), there is a clopen subset $C$ of $X$ with $x\in C$ and $y\notin C$. The equivalence relation $R$ that partitions $X$ into the two equivalence classes $C$ and $C'=X\setminus C$, namely
$$
R=(C\times C) \cup (C'\times C'),
$$
is a clopen subset of $X\times X$ and $(x,y)\notin R$. It follows that the intersection of the family $I$ is equal to the diagonal $\delta_X$ in $X\times X$.

By replacing $W$ with its interior in $X\times X$, we may assume that $W$ is an open neighborhood of $\delta_X$ in $X\times X$. So for each $R\in I$, the set $R\setminus W$ is closed in $X\times X$ and 
$$
\bigcap_{R\in I}R\setminus W\subseteq \bigcap_{R\in I}R\setminus\delta_X=\emptyset.
$$
However $X\times X$ is compact, so there exists finitely many $R_1,\ldots,R_n\in I$ such that $(R_1\setminus W)\cap\cdots\cap(R_n\setminus W)=\emptyset$. 
Let $R=R_1\cap\cdots\cap R_n$. Then $R$ is an equivalence relation on $X$ which is a clopen subset of $X\times X$ and $R\subseteq W$. Furthermore, $R$ is a neighborhood of $\delta_X$ in $X\times X$, and so $R$ is an entourage of $X$. 
\end{proof}

To describe the uniform structure of profinite directed graphs and their uniform sub-directed graphs, it is convenient to make the following definition. 

\begin{definition}[Cofinite entourage] \rm
Let $\Gamma$ be a directed graph endowed with a uniformity. 
A {\it cofinite entourage\/} of $\Gamma$ is a compatible equivalence relation of finite index on $\Gamma$ which is also an entourage of~$\Gamma$. 
\end{definition}

Here is a list of some fundamental properties of cofinite entourages that the reader can easily verify.  

\begin{lemma} \label{l:Properties of cofinite entourages}
Let\/ $\Gamma$ and $\Delta$ be directed graphs endowed with uniformities. 
Then their cofinite entourages have the following properties. 
\begin{enumerate}
\item If $R$ is a cofinite entourage of\/ $\Gamma$ and $x\in\Gamma$, then $R[x]$ is a clopen neighborhood of $x$ in\/ $\Gamma$.
\item Every cofinite entourage of\/ $\Gamma$ is a clopen subset 
of\/ $\Gamma\times\Gamma$.
\item If $R_1$ and $R_2$ are cofinite entourages of\/ $\Gamma$, then  
$R_1\cap R_2$ is also a cofinite entourage of\/ $\Gamma$. 
\item If $R$ is a compatible equivalence relation on\/ $\Gamma$ that contains a cofinite entourage of\/ $\Gamma$, then $R$ is itself a cofinite entourage.
\item If $f\from\Gamma\to\Delta$ is a uniformly continuous map and $S$ is a cofinite entourage of $\Delta$, then $(f\times f)^{-1}[S]$ is a cofinite entourage of\/ $\Gamma$.
\item If\/ $\Sigma$ is a sub-directed graph of\/ $\Gamma$ and $R$ is a cofinite entourage of\/ $\Gamma$, then the restriction $R\cap(\Sigma\times\Sigma)$ is a cofinite entourage on\/ $\Sigma$. 
\end{enumerate}
\end{lemma}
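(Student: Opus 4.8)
The plan is to verify the six assertions one at a time; each is a short deduction from the definition of a cofinite entourage together with a few elementary facts about uniform spaces --- a superset of an entourage is an entourage, a finite intersection of entourages is an entourage, the preimage of an entourage under a uniformly continuous map is an entourage, and $W[x]$ is a neighbourhood of $x$ for every entourage $W$ --- and the material on compatible equivalence relations recorded in Section~\ref{s:Preliminaries}. The one genuinely load-bearing step is~(1): once it is in hand, (2) is immediate, (3) and (4) are routine, and (5)--(6) amount to functoriality. So I would prove~(1) first.

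For~(1): since $R$ is an entourage, $R[x]$ is a neighbourhood of $x$; but $R$ is an equivalence relation, so $R[y]=R[x]$ for every $y\in R[x]$, whence $R[x]$ is a neighbourhood of each of its points, i.e.\ open. Because $R$ has finite index, $\Gamma$ is the disjoint union of finitely many classes $R[x_1],\dots,R[x_n]$, each of which is open by the preceding remark, so the complement of $R[x]$ is a finite union of open sets; thus $R[x]$ is also closed. Item~(2) then follows from the identity $R=\bigcup_{i=1}^{n}\big(R[x_i]\times R[x_i]\big)$, which exhibits $R$ as a finite union of clopen subsets of $\Gamma\times\Gamma$.

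For~(3): $R_1\cap R_2$ is an equivalence relation, it is compatible because an intersection of compatible equivalence relations is compatible, it is an entourage as a finite intersection of entourages, and it has finite index because the canonical map $\Gamma/(R_1\cap R_2)\to\Gamma/R_1\times\Gamma/R_2$ is injective. For~(4): if the cofinite entourage $S$ satisfies $S\subseteq R$, then $R$ is an entourage (it contains one), $R$ is a compatible equivalence relation by hypothesis, and the canonical surjection $\Gamma/S\onto\Gamma/R$ shows that $R$ has finite index.

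For~(5): I would note that $(f\times f)^{-1}[S]$ is precisely the kernel of the composite map of directed graphs $\nu f\from\Gamma\to\Delta/S$, where $\nu\from\Delta\to\Delta/S$ is the quotient map, and hence is a compatible equivalence relation on $\Gamma$; it is an entourage by the uniform continuity of $f$; and the first isomorphism theorem for directed graphs induces an injection $\Gamma/(f\times f)^{-1}[S]\hookrightarrow\Delta/S$, so it has finite index. Item~(6) is the special case of~(5) with $f$ the inclusion $\Sigma\hookrightarrow\Gamma$, which is a map of directed graphs because $\Sigma$ is a sub-directed graph and is uniformly continuous for the subspace uniformity on $\Sigma$; here $(f\times f)^{-1}[R]=R\cap(\Sigma\times\Sigma)$. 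No step is a real obstacle; the only point deserving attention is that the ``uniformly continuous map'' $f$ in~(5)--(6) must also be understood as a map of directed graphs, since that is exactly what forces $(f\times f)^{-1}[S]$ to be a \emph{compatible} equivalence relation rather than merely an equivalence relation.
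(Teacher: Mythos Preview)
Your proof is correct and complete. The paper itself does not give a proof of this lemma at all---it simply introduces the list with ``Here is a list of some fundamental properties of cofinite entourages that the reader can easily verify''---so there is no approach to compare against; you have supplied exactly the routine verification the authors omitted. Your observation that item~(5) tacitly requires $f$ to be a map of directed graphs (not merely a uniformly continuous function) is well taken: without that hypothesis one cannot conclude that $(f\times f)^{-1}[S]$ is \emph{compatible} with the directed graph structure on $\Gamma$, and the paper's phrasing is indeed slightly imprecise on this point.
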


We are now ready to give our characterization of profinite directed graphs in terms of their uniform structures. 

\begin{theorem} \label{t:Profinite directed graph}
Let\/ $\Gamma$ be a directed graph endowed with a compact Hausdorff topology. 
Then\/ $\Gamma$ is a profinite directed graph if and only if the set of cofinite entourages of\/  $\Gamma$ is a base for the uniformity on\/ $\Gamma$. 
\end{theorem}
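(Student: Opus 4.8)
The plan is to prove both implications directly, using the two lemmas on compact spaces established above (Lemma~\ref{l:Co-discrete} and Lemma~\ref{l:Profinite space}) together with the functorial properties of cofinite entourages (Lemma~\ref{l:Properties of cofinite entourages}).

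For the forward direction, suppose $\Gamma$ is a profinite directed graph, so $\Gamma=\varprojlim\Gamma_i$ for some inverse system $(\Gamma_i,\phi_{ij})$ of finite discrete topological directed graphs, with projection maps $\phi_i\from\Gamma\to\Gamma_i$. Since $\Gamma$ carries its unique compatible uniformity, it suffices to show that for every entourage $W$ of $\Gamma$ there is a cofinite entourage $R\subseteq W$. First I would recall (or quickly argue) that the sets $(\phi_i\times\phi_i)^{-1}[\delta_{\Gamma_i}]=\ker\phi_i$ form a base for the uniformity of $\Gamma$: this is the standard description of the uniformity on an inverse limit of discrete uniform spaces. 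Each $\ker\phi_i$ is the kernel of a map of directed graphs, hence a compatible equivalence relation; it has finite index since $\Gamma_i$ is finite; and it is an entourage by the previous sentence. So each $\ker\phi_i$ is a cofinite entourage, and given $W$ we pick $i$ with $\ker\phi_i\subseteq W$. (Alternatively, one can avoid naming an explicit inverse system: $\Gamma$ is compact Hausdorff and totally disconnected — totally disconnected because it embeds in a product of finite discrete spaces — so Lemma~\ref{l:Profinite space} hands us, for any entourage $W$, an open, finite-index equivalence relation $R\subseteq W$; it remains only to refine $R$ to a \emph{compatible} one. This refinement step is where I would use compatibility of the source and target maps: intersect $R$ with $(s\times s)^{-1}[R]$ and $(t\times t)^{-1}[R]$, or better, pass to the compatible equivalence relation generated, and check it still sits inside a prescribed $W$ after one more application of Lemma~\ref{l:Profinite space}. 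I would likely present the inverse-limit argument as the clean one and relegate the second approach to a remark.)

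For the converse, suppose the cofinite entourages of $\Gamma$ form a base for its uniformity. For each cofinite entourage $R$, the quotient $\Gamma/R$ is a finite directed graph (finite index) which we give the discrete topology; by Lemma~\ref{l:Co-discrete} (part $(1)\Leftrightarrow(3)$, or directly part~$(1)$ since $R$ is an entourage hence a neighborhood of the diagonal, hence open) the quotient map $\nu_R\from\Gamma\to\Gamma/R$ is continuous, and it is a map of directed graphs by the construction of quotient directed graphs in Section~\ref{ss:Directed graphs}. Order the cofinite entourages by reverse inclusion; this is a directed set by Lemma~\ref{l:Properties of cofinite entourages}(3), and whenever $R\subseteq R'$ there is an induced map $\Gamma/R\to\Gamma/R'$ of finite discrete directed graphs compatible with the $\nu$'s. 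This gives an inverse system whose limit $\widehat\Gamma$ receives a canonical continuous map of directed graphs $\Phi\from\Gamma\to\widehat\Gamma$. It remains to show $\Phi$ is an isomorphism of topological directed graphs. Injectivity follows from the fact that the cofinite entourages form a base and $\Gamma$ is Hausdorff, so their intersection is the diagonal: if $x\ne y$ then some cofinite entourage separates them, so $\nu_R(x)\ne\nu_R(y)$. Surjectivity and the homeomorphism claim follow from a compactness argument: $\Phi$ is a continuous injection from a compact space to a Hausdorff space, hence a homeomorphism onto its image $\Phi(\Gamma)$, which is closed in $\widehat\Gamma$; and $\Phi(\Gamma)$ is dense because its image in each finite quotient $\Gamma/R$ is all of $\Gamma/R$ (the $\nu_R$ are surjective), so $\Phi(\Gamma)=\widehat\Gamma$. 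Thus $\Gamma\cong\widehat\Gamma$ is a profinite directed graph. I would also double-check that the directed-graph structure transported through $\Phi$ agrees with the one on $\widehat\Gamma$, which is immediate since $\Phi$, the $\nu_R$, and all the bonding maps are maps of directed graphs and source/target are computed coordinatewise in the product defining $\widehat\Gamma$.

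The main obstacle I anticipate is the forward direction's compatibility refinement if one goes the Lemma~\ref{l:Profinite space} route: a bare finite-index open equivalence relation produced by that lemma need not be compatible with $s$ and $t$, and one must produce a compatible one still contained in the given entourage. The cleanest fix is to sidestep it by using an explicit presentation of $\Gamma$ as an inverse limit of \emph{finite directed graphs} (not just finite sets), where the kernels of the projections are compatible by construction; the only thing to verify there is the folklore fact that those kernels form a uniformity base, which follows from the definition of the product/subspace uniformity on $\varprojlim\Gamma_i\subseteq\prod\Gamma_i$. Everything else — continuity of quotient maps, directedness of the index set, the compact-injection-into-Hausdorff homeomorphism trick — is routine given the lemmas already proved.
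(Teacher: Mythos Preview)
Your argument is correct, but both halves proceed differently from the paper. For the forward direction the paper follows precisely your ``alternative'' route and shows that your anticipated obstacle is no obstacle at all: given the open finite-index equivalence relation $R\subseteq W$ from Lemma~\ref{l:Profinite space}, the single intersection
\[
S=R\cap(s\times s)^{-1}[R]\cap(t\times t)^{-1}[R]
\]
is already compatible (the verification uses only that $s$ and $t$ restrict to the identity on $V(\Gamma)$, so e.g.\ $s(s(x),s(y))=(s(x),s(y))$), open (since $s,t$ are continuous on the topological directed graph $\Gamma$), and hence a cofinite entourage contained in $W$. No second pass through Lemma~\ref{l:Profinite space} is needed, and there is no reason to prefer your inverse-limit-kernel argument, which is equally valid but leans on the folklore description of the inverse-limit uniformity rather than on the lemmas just proved. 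For the converse the paper does not build $\widehat\Gamma$ at all: it checks directly that $s,t$ are uniformly continuous (so $\Gamma$ is a topological directed graph) and that $\Gamma$ is totally disconnected, then appeals to the characterization of profinite directed graphs as compact, Hausdorff, totally disconnected topological directed graphs stated at the opening of Section~\ref{s:Profinite}. Your explicit inverse-limit construction is more self-contained and in effect previews the completion argument of Section~\ref{s:Completions}; the paper's route is shorter but relies on the cited equivalence. One small slip in your write-up: an entourage is a neighborhood of the diagonal but need not be open, so to conclude that $\Gamma/R$ is discrete you should invoke Lemma~\ref{l:Properties of cofinite entourages}(1) or~(2) rather than the bare definition of entourage.
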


\begin{proof}
Suppose first that $\Gamma$ is a profinite directed graph and let $W$ be an entourage of $\Gamma$. By Lemma~\ref{l:Profinite space}, there exists an equivalence relation $R$ on $\Gamma$ such that $R$ is open in $X\times X$ and $R\subseteq W$. Let 
$$
S=R\cap(s\times s)^{-1}[R]\cap(t\times t)^{-1}[R].
$$
Then $S$ is an equivalence relation on $\Gamma$ and it is open in $X\times X$. 
We claim that $S$ is also a subgraph of $\Gamma\times\Gamma$, and thus is a compatible equivalence relation on $\Gamma$. 
For if $(x,y)\in S$, then 
\begin{enumerate}
\item[(i)] $s(x,y)=(s(x),s(y))\in R$ since $(x,y)\in(s\times s)^{-1}[R]$,
\item[(ii)] $s(x,y)\in (s\times s)^{-1}[R]$ since $s(s(x),s(y))=(s(x),s(y))\in R$, and
\item[(iii)] $s(x,y)\in (t\times t)^{-1}[R]$ since $t(s(x),s(y))=(s(x),s(y))\in R$.
\end{enumerate}
Hence $s(x,y)\in S$. Similarly $t(x,y)\in S$, and therefore $S$ is compatible. Furthermore, it is of finite index by Lemma~\ref{l:Co-discrete}. Hence $S$ is a cofinite entourage of $\Gamma$ such that $S\subseteq W$. It follows that the set of all cofinite entourages of $\Gamma$ a base for the uniformity on\/ $\Gamma$.

Conversely, assume now that the set, say $I$, of all cofinite entourages of $\Gamma$ is a base for the uniformity on\/ $\Gamma$. 
Then the family $I|_{V(\Gamma)}$, consisting of all $R\cap[V(\Gamma\times\Gamma]$ for  $R\in I$, is a fundamental system of entourages of the uniform vertex space $V(\Gamma)$. Note also that if $R\in I$, then
$(s\times s)[R]\subseteq R\cap[V(\Gamma\times V(\Gamma)]$ and  
$(t\times t)[R]\subseteq R\cap[V(\Gamma\times V(\Gamma)]$
since $R$ is a sub-directed graph of $\Gamma\times\Gamma$. It follows that $s$ and $t$ are (uniformly) continuous, and so $\Gamma$ is a topological directed graph. 
It remains to see that $\Gamma$ is totally disconnected. 
Let $x\in\Gamma$ and let $C$ be the component of $\Gamma$ containing $x$. Then 
$C\subseteq\bigcap_{R\in I}R[x]=\{x\}$ since the $R[x]$ are clopen sets and form a neighborhood base for $\Gamma$ at $x$ and $\Gamma$ is Hausdorff. It follows that $C=\{x\}$, and thus $\Gamma$ is totally disconnected. 
\end{proof}

%%%%%%%%%%  Cofinite directed graphs

\section{Cofinite directed graphs} \label{s:Cofinite}

Let $\Gamma$ be a profinite directed graph and let $\Sigma$ be a sub-directed graph endowed with the induced uniformity. Then $\Sigma$ is Hausdorff and the family of all $R\cap(\Sigma\times\Sigma)$ as $R$ runs though all cofinite entourages of $\Gamma$, is a fundamental system of entourages of $\Sigma$. Since each $R\cap(\Sigma\times\Sigma)$ is a cofinite entourage of the uniform sub-directed graph $\Sigma$, it follows that every uniform sub-directed graph $\Sigma$ of a profinite directed graph $\Gamma$ has a fundamental system of entourages consisting of cofinite entourages of $\Sigma$. In light of this observation, we make the following definition.

\begin{definition}[Cofinite directed graph] \label{d:Cofinite directed graph} \rm
A {\it cofinite directed graph\/} is a directed graph $\Delta$ endowed with a Hausdroff uniformity that has a base consisting of cofinite entourages of~$\Delta$.  
\end{definition}

Note that by virtue of Theorem~\ref{t:Profinite directed graph}, profinite directed graphs are precisely the compact cofinite directed graphs. 
Also note that the definition of cofinite directed graphs does not assume that the source and target maps are uniformly continuous, or even continuous. However, we next observe that this is automatically true. 

\begin{lemma} \label{l:Uniformly continuous source and target}
Let\/ $\Gamma$ be a cofinite directed graph. Then the source and target maps $s,t\from\Gamma\to V(\Gamma)$ are uniformly continuous maps onto the uniform subspace $V(\Gamma)$. In particular, $\Gamma$ is a Hausdorff topological graph and $V(\Gamma)$ is a closed subset of\/ $\Gamma$. 
\end{lemma}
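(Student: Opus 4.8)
The plan is to exploit the fact that a cofinite directed graph $\Gamma$ has a base of cofinite entourages, each of which is a compatible equivalence relation of finite index, and then pass to the associated finite quotients. First I would show directly that $s$ and $t$ are uniformly continuous: given a cofinite entourage $S$ of the uniform vertex subspace $V(\Gamma)$, by definition of the subspace uniformity there is an entourage $W$ of $\Gamma$ with $W\cap(V(\Gamma)\times V(\Gamma))\subseteq S$, and shrinking $W$ I may take $W=R$ to be a cofinite entourage of $\Gamma$. Since $R$ is a sub-directed graph of $\Gamma\times\Gamma$, whenever $(x,y)\in R$ we have $(s(x),s(y))\in R$, and since $s(x),s(y)\in V(\Gamma)$ this says $(s(x),s(y))\in R\cap(V(\Gamma)\times V(\Gamma))\subseteq S$. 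Hence $(s\times s)[R]\subseteq S$, which is exactly uniform continuity of $s\colon\Gamma\to V(\Gamma)$; the argument for $t$ is identical. Uniformly continuous maps are continuous, so $\Gamma$ is a topological directed graph.

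Next, Hausdorffness of $\Gamma$ is part of the definition of a cofinite directed graph, so the only remaining claim is that $V(\Gamma)$ is closed in $\Gamma$. The cleanest route is to observe that $V(\Gamma)$ is the equalizer of the two continuous maps $s,t\colon\Gamma\to\Gamma$ (composing with the inclusion $V(\Gamma)\hookrightarrow\Gamma$), i.e.\ $V(\Gamma)=\{x\in\Gamma\mid s(x)=t(x)\}$ — this holds because $s(x)=x=t(x)$ for vertices, while for an edge $e$ the target $t(e)$ and source $s(e)$ are vertices, hence distinct from $e$, so $s(e)=t(e)$ would force $e\in V(\Gamma)$, a contradiction; actually one needs $s(e)\ne t(e)$ is not guaranteed, so instead use that $s(e),t(e)\in V(\Gamma)$ while $e\notin V(\Gamma)$, whence $e\ne s(e)$, so $x\ne s(x)$ for $x\notin V(\Gamma)$. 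Therefore $V(\Gamma)=\{x\in\Gamma\mid x=s(x)\}$ is the equalizer of $s$ and $\id_\Gamma$. In a Hausdorff space the equalizer of two continuous maps (here into the Hausdorff space $\Gamma$ itself) is closed, so $V(\Gamma)$ is closed in $\Gamma$.

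The main obstacle, such as it is, is being careful about the distinction between $s$ as a map $\Gamma\to V(\Gamma)$ (onto the subspace, which is what uniform continuity refers to) and $s$ as a map $\Gamma\to\Gamma$ (which is what is used to realize $V(\Gamma)$ as an equalizer); one must check the subspace $V(\Gamma)$ carries the uniformity induced from $\Gamma$ so that the two notions of continuity of $s$ agree, which is immediate since $V(\Gamma)$ is given the induced uniform structure by hypothesis. There is no deep content here — everything reduces to the fact that cofinite entourages are compatible equivalence relations, so they automatically ``respect'' $s$ and $t$, together with the standard fact that equalizers into Hausdorff spaces are closed. I would write the proof in essentially the two paragraphs above, first establishing uniform continuity of $s$ and $t$ from a single cofinite entourage, then deducing the topological-graph statement and the closedness of $V(\Gamma)$ as the equalizer of $s$ and $\id_\Gamma$.
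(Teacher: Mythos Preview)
Your proposal is correct and follows essentially the same approach as the paper: uniform continuity of $s$ and $t$ comes from the compatibility of cofinite entourages (so that $(s\times s)[R]\subseteq R\cap(V(\Gamma)\times V(\Gamma))$), and closedness of $V(\Gamma)$ follows because it is the fixed-point set of the continuous map $s$ in the Hausdorff space $\Gamma$. The paper states the last step more tersely, simply invoking that the vertex set of a Hausdorff topological directed graph is closed (a fact noted earlier in the paper via exactly your fixed-point argument); you might also tidy up by deleting the false start with $\{x:s(x)=t(x)\}$, since loops show this set need not equal $V(\Gamma)$.
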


\begin{proof}
Let $R$ be a cofinite entourage of $\Gamma$. Then 
$$
(s\times s)[R]\subseteq R\cap[V(\Gamma)\times V(\Gamma)] 
\hbox{~and~} (t\times t)[R]\subseteq R\cap[V(\Gamma)\times V(\Gamma)]
$$
and thus $s$, $t$ are uniformly continuous since the sets 
$R\cap[V(\Gamma)\times V(\Gamma)]$, as $R$ runs through all cofinite entourages of $\Gamma$, form a base for the relative uniformity on $V(\Gamma)$. 
The last part follows since uniformly continuous maps are continuous and the vertex set of a Hausdorff topological directed graph is closed. 
\end{proof}

%%%%%%%%%%%%  Topological structure of cofinite objects

We always endow a cofinite directed graph with the topology induced by its uniformity, and as we just observed, this makes it into a Hausdorff topological directed graph. 
This topology can be characterized as follows. 

Let $\Gamma$ be a cofinite directed graph and let $I$ be a set of cofinite entourages of $\Gamma$ forming a base for the uniformity on $\Gamma$. 
We denote the closure of a subset $A$ of $\Gamma$ (in its uniform topology) by 
$\overline A$, and note that it is given by 
$$
\overline A=\bigcap_{R\in I}R[A]
$$
and each $R[A]$ is a clopen neighborhood of $A$ in $\Gamma$; 
see, for instance, Proposition~II.1.2.2 and its corollary of~\cite{nB66}. 
Similarly, if $W\subseteq\Gamma\times\Gamma$, then its closure $\overline W$ in the product space $\Gamma\times\Gamma$ is given by 
$$
\overline W=\bigcap_{R\in I}(R\times R)[W]=\bigcap_{R\in I}RWR
$$
and each $RWR$ is a clopen neighborhood of $W$ in $\Gamma\times\Gamma$.
See \cite[Chapter II]{nB66} or \cite[Chapter 6]{jK55} for more on the basic properties of uniform topologies.   

As we noted above, every sub-directed graph of a profinite directed graph is a cofinite directed graph when it is endowed with its relative uniformity. On the other hand, cofinite directed graphs can be constructed from scratch as follows. 

Let $\Gamma$ be any abstract directed graph and let $I$ be a filter base of compatible equivalence relations of finite index on $\Gamma$. (That is, the set $I$ has the property: if $R_1,R_2\in I$, then there exists $R_3\in I$ such that $R_3\subseteq R_1\cap R_2$.) 
Then $I$ is a base for a unique uniformity on $\Gamma$; see for instance~\cite[Theorem~6.2]{jK55}.
Thus $\Gamma$ endowed with this uniformity is a cofinite directed graph provided that the induced topology is Hausdorff. Moreover, it is obvious that all of the various cofinite structures that can be put on $\Gamma$ arise in this way. 

Amongst all uniform structures of this type that can be put on $\Gamma$, there is a unique finest one, namely that in which $I$ consists of {\it all\/} compatible equivalence relations of finite index on $\Gamma$. We next observe that $\Gamma$ endowed with this finest uniformity is Hausdorff, and so is a cofinite directed graph. 

\begin{proposition} \label{p:Finest cofinite uniformity}
Let $\Gamma$ be an abstract directed graph endowed with the unique finest uniformity having a base consisting of compatible equivalence relations of finite index on $\Gamma$. 
Then $\Gamma$ is a discrete cofinite directed graph. 
\end{proposition}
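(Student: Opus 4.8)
The claim has two parts: that the induced topology on $\Gamma$ is Hausdorff (so that $\Gamma$ qualifies as a cofinite directed graph under Definition~\ref{d:Cofinite directed graph}), and that this topology is in fact discrete. The plan is to establish discreteness directly, since discreteness trivially implies Hausdorff. To do this I would show that for each fixed element $a\in\Gamma$ there is a single compatible equivalence relation $R_a$ of finite index on $\Gamma$ whose class $R_a[a]$ equals $\{a\}$; then $\{a\}$ is a neighborhood of $a$ in the uniform topology (since the $R[a]$, as $R$ ranges over the base, form a neighborhood base at $a$ by Lemma~\ref{l:Properties of cofinite entourages}(1) and the discussion of uniform topologies in the excerpt), and hence every singleton is open.

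The construction of $R_a$ is the heart of the argument. Fix $a\in\Gamma$. I would like to build a compatible equivalence relation with finitely many classes, one of which is the singleton $\{a\}$. First handle the case where $a$ is a vertex: take the partition of $\Gamma$ whose classes are $\{a\}$, $V(\Gamma)\setminus\{a\}$, and $E(\Gamma)$ — one checks this is compatible (the source and target of any edge lie in $V(\Gamma)$, and if two elements are equivalent then either both are the vertex $a$, both lie in $V(\Gamma)\setminus\{a\}$, or both are edges, and in each case their sources are equivalent and their targets are equivalent; note the source and target of $a$ are $a$ itself) and it has index at most $3$. If $a$ is an edge, the situation is slightly more delicate because collapsing $s(a)$ and $t(a)$ to the same class could force, via transitivity and compatibility, a collapse one does not want; but one can take the classes to be $\{a\}$, $E(\Gamma)\setminus\{a\}$, $\{s(a)\}$, $\{t(a)\}$, and $V(\Gamma)\setminus\{s(a),t(a)\}$, i.e.\ separate out at most five classes. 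Compatibility is checked by the same kind of case analysis: equivalent elements are either both the edge $a$ (with common source $s(a)$ and common target $t(a)$, both singleton classes), both in $E(\Gamma)\setminus\{a\}$ (sources and targets in $V(\Gamma)$, and they need not be separated), or lie in one of the three vertex classes (their sources and targets equal themselves). In all cases the finite index and the compatibility hold, and $\{a\}$ is one of the classes. (If $s(a)=t(a)$ one simply has fewer classes; the argument is unaffected.)

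Having produced, for every $a\in\Gamma$, a compatible equivalence relation $R_a$ of finite index with $R_a[a]=\{a\}$, I note that $R_a$ belongs to the base $I$ of the finest cofinite uniformity, since $I$ consists of \emph{all} compatible equivalence relations of finite index. Therefore $R_a$ is a cofinite entourage, so by Lemma~\ref{l:Properties of cofinite entourages}(1) the set $R_a[a]=\{a\}$ is a (clopen) neighborhood of $a$. Since $a$ was arbitrary, every point is open, i.e.\ the uniform topology on $\Gamma$ is discrete; in particular it is Hausdorff, so $\Gamma$ with this uniformity is a cofinite directed graph, and a discrete one. I do not expect any real obstacle here: the only point requiring a little care is the edge case of the construction of $R_a$, where one must verify that carving out the five (or fewer) classes listed above genuinely yields a \emph{compatible} equivalence relation — i.e.\ that no unwanted identifications are forced by transitivity together with the compatibility requirement on sources and targets — which the case analysis above confirms.
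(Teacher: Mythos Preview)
Your overall strategy matches the paper's: show that every singleton $\{a\}$ is the $a$-class of some compatible equivalence relation of finite index, hence open. The gap is in your concrete constructions of $R_a$ --- neither your vertex-case partition nor your edge-case partition is actually \emph{compatible}.

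In the vertex case you take the classes $\{a\}$, $V(\Gamma)\setminus\{a\}$, $E(\Gamma)$. But compatibility requires that if $(x,y)\in R_a$ then $(s(x),s(y))\in R_a$. Take two edges $e_1,e_2$ with $s(e_1)=a$ and $s(e_2)\ne a$; they lie in the same class $E(\Gamma)$, yet $s(e_1)\in\{a\}$ and $s(e_2)\in V(\Gamma)\setminus\{a\}$ are in different classes. So $R_a$ is not compatible. The same failure occurs in your edge case: you put all of $E(\Gamma)\setminus\{a\}$ into one class while splitting the vertices into three classes $\{s(a)\},\{t(a)\},V(\Gamma)\setminus\{s(a),t(a)\}$; two edges in $E(\Gamma)\setminus\{a\}$ can have sources in different vertex classes. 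The slogan is: once you separate vertices into more than one class, you are forced to separate edges according to where their sources and targets fall.

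The paper avoids this by producing the relation as the kernel of a map of directed graphs to a small finite model, which automatically guarantees compatibility. For $a\in E(\Gamma)$ the paper uses the two-element target $\Delta=\{v,e\}$ (one vertex, one loop) with $a\mapsto e$ and everything else to $v$; the kernel has classes $\{a\}$ and $\Gamma\setminus\{a\}$, and compatibility is immediate since every source and target is a vertex, hence not $a$. For $a\in V(\Gamma)$ one cannot get away with so few classes: the paper maps $\Gamma$ to a target with two vertices and four edges (one loop at each vertex and one edge in each direction between them), sending $a$ to one vertex and every other vertex to the other; edges are then forced into four classes according to which of their endpoints equal $a$. If you prefer to describe partitions directly, the fix to your vertex case is to refine $E(\Gamma)$ into the four pieces $\{e:s(e)=a,\,t(e)=a\}$, $\{e:s(e)=a,\,t(e)\ne a\}$, $\{e:s(e)\ne a,\,t(e)=a\}$, $\{e:s(e)\ne a,\,t(e)\ne a\}$; together with $\{a\}$ and $V(\Gamma)\setminus\{a\}$ this gives a compatible equivalence relation of index at most $6$ with $R_a[a]=\{a\}$.
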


\begin{proof}
Let  $y\in V(\Gamma)$. 
Denote by $\Delta$ the finite (abstract) directed graph with $V(\Delta)=\{a,b\}$ and 
$E(\Delta)=\{e,f,g,\overline g\}$, where $s(e)=t(e)=a$, $s(f)=t(f)=b$, $s(g)=a$, $t(g)=b$, 
$s(\overline g)=b$ and $t(\overline g)=a$. 
There is a unique map of directed graphs $\phi\from\Gamma\to\Delta$ such that $\phi(y)=a$ and $\phi(x)=b$ for all vertices $x\ne y$. The kernel $R=\ker\phi$ is a compatible equivalence relation of finite index on $\Gamma$ such that $R[y]=\{y\}$.  

Now let $y\in E(\Gamma)$ and let $\Delta=\{a,e\}$ be the finite directed  graph with $V(\Delta)=\{a\}$. Define a map of directed graphs $\phi\from\Gamma\to\Delta$ by $\phi(y)=e$ and $\phi(x)=a$ for all $x\ne y$. Then the kernel $R=\ker\phi$ is a compatible equivalence relation of finite index on $\Gamma$ such that $R[y]=\{y\}$.

We see that every singleton subset $\{y\}$ of $\Gamma$ is open in the topology induced by the uniformity on $\Gamma$, and hence this topology is discrete. In particular, $\Gamma$ is Hausdorff, and so a cofinite directed graph. 
\end{proof}

It would be interesting to know which other filter bases $I$ of compatible equivalence relations of finite index on an abstract directed graph $\Gamma$, besides the largest one, make $\Gamma$ into a cofinite directed graph. The following result helps with this.

\begin{proposition} \label{p:Hausdorff directed graph}
If\/ $\Gamma$ is an abstract directed graph and $I$ is a filter base of compatible equivalence relations of finite index on $\Gamma$, then the following conditions on $\Gamma$ endowed with the uniformity generated by $I$ are equivalent:
\begin{enumerate}
\item $\Gamma$ is Hausdorff, and so $\Gamma$ is a cofinite directed graph; 
\item $\Gamma$ is totally disconnected;
\item $\bigcap_{R\in I}R=\delta_\Gamma$, the diagonal in $\Gamma\times\Gamma$.
\end{enumerate}
\end{proposition}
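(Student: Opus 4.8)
The plan is to prove the cycle $(1)\Rightarrow(2)\Rightarrow(3)\Rightarrow(1)$, since each implication is close to standard uniform-space theory once one unwinds the definitions. Throughout, write $\mathcal U$ for the uniformity on $\Gamma$ generated by the filter base $I$, so that $\{R[x]\mid R\in I\}$ is a neighborhood base at each point $x\in\Gamma$, and recall from Lemma~\ref{l:Co-discrete} (applied with the discrete quotient, or directly) that each $R\in I$, being a compatible equivalence relation of finite index, has every class $R[x]$ clopen, so the induced topology has a base of clopen sets.

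First I would do $(2)\Rightarrow(3)$. Suppose $(x,y)\in\bigcap_{R\in I}R$ with $x\ne y$; I want a contradiction with total disconnectedness. The set $\{x,y\}$ cannot be contained in any single class $R[x]$ cut down — wait, that is exactly what the hypothesis says fails — so instead I argue that $\bigcap_{R\in I}R[x]$ contains both $x$ and $y$, and this intersection is a connected set: it is the intersection of the clopen neighborhoods $R[x]$, and in a space with a base of clopen sets the quasi-component of $x$ equals $\bigcap\{C : C \text{ clopen}, x\in C\}$, which is contained in $\bigcap_{R\in I} R[x]$; conversely each $R[x]$ is clopen so in fact the quasi-component equals $\bigcap_{R\in I}R[x]$. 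In a compact Hausdorff space quasi-components coincide with components, but here $\Gamma$ need not be compact, so I would instead observe directly that $\bigcap_{R\in I}R[x]$ is connected: any clopen partition of it would, by the base-of-clopen-sets property, refine to one of the $R[x]$'s, forcing one piece to be empty. Hence $\{x,y\}$ lies in a connected subset, so the component of $x$ is nontrivial, contradicting $(2)$. Thus $(2)\Rightarrow(3)$.

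Next, $(3)\Rightarrow(1)$ is immediate: a uniformity is Hausdorff (equivalently, its topology is Hausdorff/$T_2$) if and only if the intersection of all its entourages is the diagonal $\delta_\Gamma$ — see \cite[Chapter II]{nB66} or \cite[Chapter 6]{jK55}. Since $I$ is a base for $\mathcal U$, $\bigcap_{R\in I}R=\bigcap_{W\in\mathcal U}W$, so condition $(3)$ says exactly that $\mathcal U$ is separated, hence $\Gamma$ is Hausdorff; and being Hausdorff with a base of compatible finite-index equivalence relations, it is by definition a cofinite directed graph. Finally, $(1)\Rightarrow(2)$: if $\Gamma$ is Hausdorff, then for $x\ne y$ there is $R\in I$ with $(x,y)\notin R$, i.e.\ $y\notin R[x]$; since $R[x]$ is clopen, the component of $x$ is contained in $R[x]$ and thus omits $y$. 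As $y$ was arbitrary, the component of $x$ is $\{x\}$, so $\Gamma$ is totally disconnected, completing the cycle.

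The only genuinely delicate point is the connectedness claim inside $(2)\Rightarrow(3)$ — namely that in a topological space with a base of clopen sets, the intersection of the clopen neighborhoods of a point is connected (equivalently, that quasi-components are connected here, which is special to the base-of-clopen-sets setting and would be false in general). I would isolate this as the one step needing a careful argument: given clopen $A$ with $\emptyset\ne A\cap\bigcap_R R[x]\ne \bigcap_R R[x]$, pull back $A$ and its complement through the fact that $\{R[z]\}$ is a clopen base to express the whole space, hence $\bigcap_R R[x]$, as a union of $R$-classes, and derive that $A$ already separates some $R[x]$, which is impossible since $R[x]$ is a single equivalence class and thus, being one block of the open partition, is connected only in the weak sense we need — more precisely, one shows $\bigcap_R R[x]$ is not the union of two nonempty relatively clopen sets by noting any such splitting lifts to a coarser $R'\in I$ refining the partition, contradicting minimality. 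Everything else is bookkeeping with the standard dictionary between uniformities and their topologies.
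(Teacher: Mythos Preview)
Your $(1)\Rightarrow(2)$ and $(3)\Rightarrow(1)$ are correct and essentially the paper's arguments. For $(2)\Rightarrow(3)$ you and the paper diverge. The paper argues directly: if $\Gamma$ is totally disconnected and $x\ne y$, the two-point subspace $\{x,y\}$ is disconnected, so there is an open $U$ with $x\in U$ and $y\notin U$; then some $R\in I$ has $R[x]\subseteq U$, whence $(x,y)\notin R$. Done in two lines.

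You instead argue the contrapositive via the claim that $A:=\bigcap_{R\in I}R[x]$ is connected. This claim is true, but your justification does not quite land: the phrase ``refine to one of the $R[x]$'s, forcing one piece to be empty'' hides the crucial step, and your final paragraph's elaboration reuses the letter $A$ for a different set, invokes an undefined ``minimality,'' and never states the point cleanly. What actually makes the argument work is this: if $a\in A$, then $a\in R[x]$ for every $R\in I$, and since each $R$ is an equivalence relation, $R[a]=R[x]$. Thus every point of $A$ has the \emph{same} basic neighborhood system $\{R[x]:R\in I\}$, and each such neighborhood contains $A$; hence the subspace topology on $A$ is indiscrete and $A$ is trivially connected. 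With that one sentence inserted your route is complete --- but the paper's direct two-point-subspace argument is shorter and avoids the detour through quasi-components entirely.
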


\begin{proof}
$(1)\Rightarrow(2)$: Let $x\in\Gamma$ and let $C$ be the component of $\Gamma$ containing $x$. For all $R\in I$, note that $C\subseteq R[x]$ since $R[x]$ is a clopen set containing $x$. Furthermore, when $\Gamma$ is Hausdorff, $\bigcap R[x]=\{x\}$, and thus $C=\{x\}$ and $\Gamma$ is totally disconnected. 

$(2)\Rightarrow(3)$: Let $x,y\in\Gamma$ with $x\ne y$. When $\Gamma$ is totally disconnected, there exists an open set $U$ of $\Gamma$ with $x\in U$ and $y\notin U$ since otherwise the subset $\{x,y\}$ would be connected. Since $U$ is open, there exists $R\in I$ such that $R[x]\subseteq U$. Then $y\notin R[x]$ so that $(x,y)\notin R$, and it follows that 
$\bigcap R=\delta_\Gamma$. 

$(3)\Rightarrow(1)$: We have that  
$\overline\delta_\Gamma=\bigcap R\delta_\Gamma R=\bigcap R$ since $\delta_\Gamma\subseteq R$ and $R^2=R$ as each $R\in I$ is an equivalence relation. 
Thus, when (3) holds, $\delta_\Gamma$ is closed in $\Gamma\times\Gamma$ and thus $\Gamma$ is Hausdorff. 
\end{proof}

Consequently, we see that every cofinite directed graph $\Gamma$ is totally disconnected and that the intersection of all of its cofinite entourages is equal to the diagonal $\delta_\Gamma$ of $\Gamma\times\Gamma$.

%%%%%%%%%%    Completions of cofinite directed graphs

\section{Completions of cofinite directed graphs} \label{s:Completions}

In this section we define completions of cofinite directed graphs analogously to the way that  completions of cofinite groups are defined in~\cite{bH77}. 
We show that every cofinite directed graph $\Gamma$ has a completion $\overline\Gamma$ (using a standard construction), and that it is unique up to an isomorphism extending the identity map on $\Gamma$. 
It turns out to be precisely the Hausdorff completion of its underlying uniform space (Corollary~\ref{c:Hausdorff completion}).
Further generalizing the situation for cofinite groups~\cite{bH77}, we observe that the completion $\overline\Gamma$ is a profinite directed graph and that its cofinite entourages are precisely the closures $\overline R$ in 
$\overline\Gamma\times\overline\Gamma$ of the cofinite entourages $R$ of $\Gamma$. 
We begin with an analogue of~\cite[Theorem~2.1]{bH77}. 

\begin{theorem} \label{t:Completion}
Let\/ $\Gamma$ be a cofinite directed graph which is embedded as a dense uniform sub-directed graph of a compact Hausdorff topological directed graph\/~$\overline\Gamma$. If $\Delta$ is any compact Hausdorff topological 
directed graph and $f\from\Gamma\to\Delta$ is a uniformly continuous map of directed graphs, then there exists a unique continuous map of directed graphs $\f\from\overline\Gamma\to\Delta$ extending $f$. 
\end{theorem}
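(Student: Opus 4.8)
The plan is to produce $\f$ by extension by uniform continuity and then to check that it respects the directed-graph structure via a density argument. First I would recall that a compact Hausdorff space carries a unique uniformity, namely the set of all neighbourhoods of the diagonal, and is complete for it; hence both $\overline\Gamma$ and $\Delta$ are complete Hausdorff uniform spaces. Since $\Gamma$ is embedded in $\overline\Gamma$ as a \emph{uniform} sub-directed graph, the uniformity on $\Gamma$ is the subspace uniformity inherited from $\overline\Gamma$, so $f$ is a uniformly continuous map from the dense subspace $\Gamma$ of the uniform space $\overline\Gamma$ into the complete Hausdorff uniform space $\Delta$. The classical extension theorem for uniformly continuous maps (e.g.\ \cite[II.3.6, Theorem~2]{nB66}, or \cite[Chapter~6]{jK55}) then yields a unique uniformly continuous map $\f\from\overline\Gamma\to\Delta$ with $\f|_\Gamma=f$; in particular $\f$ is continuous. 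Uniqueness of $\f$ even among \emph{all} continuous extensions of $f$ is then automatic, since two continuous maps into the Hausdorff space $\Delta$ that agree on the dense subset $\Gamma$ must coincide.

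It remains to verify that $\f$ preserves sources and targets. Write $s,t$ for the source and target maps of $\overline\Gamma$ and $s_\Delta,t_\Delta$ for those of $\Delta$; all four are continuous because $\overline\Gamma$ and $\Delta$ are topological directed graphs (composing with the inclusion $V(\Delta)\hookrightarrow\Delta$ if one wishes to regard $s_\Delta,t_\Delta$ as self-maps of $\Delta$). Consider the two continuous maps $\f\circ s$ and $s_\Delta\circ\f$ from $\overline\Gamma$ to $\Delta$. On $\Gamma$ the restriction of $s$ is the source map of $\Gamma$ (as $\Gamma$ is a sub-directed graph of $\overline\Gamma$), so for $x\in\Gamma$ we get $\f(s(x))=f(s(x))=s_\Delta(f(x))=s_\Delta(\f(x))$, using that $f$ is a map of directed graphs. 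Hence $\f\circ s$ and $s_\Delta\circ\f$ agree on the dense set $\Gamma$, and since $\Delta$ is Hausdorff they agree on all of $\overline\Gamma$; the same argument with $t$ in place of $s$ handles targets. Therefore $\f$ preserves sources and targets, i.e.\ it is a map of directed graphs, which completes the proof.

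I do not expect a serious obstacle: the statement is essentially the functoriality of the Hausdorff (complete) completion of a uniform space, specialised to topological directed graphs, and the only content beyond the classical extension theorem is the observation that preservation of the source and target maps passes from a dense sub-directed graph to the whole space because $\Delta$ is Hausdorff. The one point to watch is that the hypothesis ``uniform sub-directed graph'' is used exactly to guarantee that the uniformity on $\Gamma$ for which $f$ is uniformly continuous is the subspace uniformity from $\overline\Gamma$ — this is what licenses the appeal to the extension theorem — while the cofiniteness of $\Gamma$ itself plays no further role here, having already been used to place $\Gamma$ inside such a compact Hausdorff $\overline\Gamma$.
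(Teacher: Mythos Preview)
Your proof is correct and follows essentially the same approach as the paper: apply the extension theorem for uniformly continuous maps into a complete Hausdorff space to obtain $\f$, and then use the density of $\Gamma$ together with the Hausdorffness of $\Delta$ to conclude that $\f\circ s=s_\Delta\circ\f$ and $\f\circ t=t_\Delta\circ\f$. The paper phrases the density step via the closed set $B=\{x\in\overline\Gamma\mid\f(s(x))=s(\f(x))\}$, but this is exactly your argument.
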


\begin{proof}
Being compact and Hausdorff, $\Delta$ is a complete uniform space. So there exists a unique uniformly continuous map $\f\from\overline\Gamma\to\Delta$ such that 
$\f|_\Gamma=f$; see, for example,~\cite[II.3.6, Theorem~2]{nB66}. 
It remains to verify that $\f$ is a map of directed graphs. 
Let $B=\{ x\in\overline\Gamma\mid\f(s(x))=s(\f(x)) \}$. 
Then $B$ is closed in $\overline\Gamma$ since $\f\circ s$ and $s\circ\f$ are continuous maps from $\overline\Gamma$ to $\Delta$ and $\Delta$ is Hausdorff. 
However, $\Gamma\subseteq B$ and $\Gamma$ is dense in $\overline\Gamma$. Thus $B=\overline\Gamma$ and hence $\f$ preserves sources. Similarly, $\f$ preserves targets and thus is a map of directed graphs. 
\end{proof}

\begin{corollary} \label{c:Completion}
In the situation of the theorem, $\f(\overline\Gamma) = \overline{f(\Gamma)}$.
\end{corollary}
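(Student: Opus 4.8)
The plan is to unwind the equality $\bar f(\overline\Gamma)=\overline{f(\Gamma)}$ into the two inclusions and to exploit compactness together with density. First I would observe that $\bar f$ is continuous (by the theorem) and $\overline\Gamma$ is compact, so $\bar f(\overline\Gamma)$ is a compact subset of the Hausdorff space $\Delta$, hence closed. Since $\bar f$ extends $f$, we have $f(\Gamma)=\bar f(\Gamma)\subseteq\bar f(\overline\Gamma)$, and taking closures gives $\overline{f(\Gamma)}\subseteq\overline{\bar f(\overline\Gamma)}=\bar f(\overline\Gamma)$. That is one inclusion.

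For the reverse inclusion, I would use that $\Gamma$ is dense in $\overline\Gamma$ and that continuous maps carry the closure of a set into the closure of its image: for any continuous $g\colon X\to Y$ and $A\subseteq X$ one has $g(\overline A)\subseteq\overline{g(A)}$. Applying this with $g=\bar f$, $X=\overline\Gamma$, and $A=\Gamma$ (whose closure is all of $\overline\Gamma$), we get $\bar f(\overline\Gamma)=\bar f(\overline\Gamma)\subseteq\overline{\bar f(\Gamma)}=\overline{f(\Gamma)}$. Combining the two inclusions yields the claimed equality.

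I do not expect any real obstacle here; the corollary is essentially a formal consequence of three standard facts — continuous images of compact sets are compact (hence closed in a Hausdorff target), $\bar f|_\Gamma=f$, and $g(\overline A)\subseteq\overline{g(A)}$ for continuous $g$ — all of which are available once the theorem is in hand. The only point worth stating carefully is that $\overline{f(\Gamma)}$ is taken inside $\Delta$, which is exactly where $\bar f(\overline\Gamma)$ lives, so the two closures are being compared in the same ambient space. If anything, the ``hard'' part is purely expository: being explicit that compactness of $\overline\Gamma$ is what promotes the trivial inclusion $\overline{f(\Gamma)}\subseteq\overline{\bar f(\overline\Gamma)}$ to $\overline{f(\Gamma)}\subseteq\bar f(\overline\Gamma)$.
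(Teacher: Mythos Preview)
Your proof is correct and is essentially the same as the paper's: both establish the two inclusions using that $\bar f(\overline\Gamma)$ is compact (hence closed) in the Hausdorff space $\Delta$, and that continuity gives $\bar f(\overline\Gamma)\subseteq\overline{\bar f(\Gamma)}=\overline{f(\Gamma)}$. The only difference is the order in which you present the two inclusions and the level of detail you add.
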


\begin{proof}
Since $\f$ is continuous, 
$\f(\overline\Gamma)\subseteq\overline{\f(\Gamma)}
=\overline{f(\Gamma)}$.
On the other hand, $f(\Gamma)\subseteq\f(\overline\Gamma)$ and 
$\f(\overline\Gamma)$ is closed, since it is a compact subset of the Hausdorff space $\Delta$. Thus $\overline{f(\Gamma)}\subseteq\f(\overline\Gamma)$.
\end{proof}

Based on the theorem above, we make the following definition. 

\begin{definition}[Completion]\rm
Let $\Gamma$ be a cofinite directed graph. Then any compact Hausdorff topological directed graph $\overline\Gamma$ that contains $\Gamma$ as a dense uniform sub-directed graph is called a {\it completion\/} of $\Gamma$.
\end{definition}

As an immediate consequence of Theorem~\ref{t:Completion}, we have the following result on uniqueness of completions. 

\begin{corollary} \label{c:Uniqueness of completions}
If\/ $\overline\Gamma_1$ and\/ $\overline\Gamma_2$ are completions of a cofinite directed graph\/~$\Gamma$, then there is an isomorphism of topological graphs 
$\overline i\from\overline\Gamma_1\to\overline\Gamma_2$ extending the identity map 
on\/~$\Gamma$.
\end{corollary}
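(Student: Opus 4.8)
The plan is to derive this uniqueness statement directly from Theorem~\ref{t:Completion} by a standard pair of mutually inverse extensions. The key observation is that the theorem, applied with $\Delta=\overline\Gamma_2$ and $f$ the inclusion $\Gamma\hookrightarrow\overline\Gamma_2$ (which is a uniformly continuous map of directed graphs since $\Gamma$ carries the relative uniformity of $\overline\Gamma_2$ by the definition of completion), yields a unique continuous map of directed graphs $\overline i\from\overline\Gamma_1\to\overline\Gamma_2$ restricting to the identity on $\Gamma$. Symmetrically, applying the theorem with the roles of $\overline\Gamma_1$ and $\overline\Gamma_2$ reversed gives a continuous map of directed graphs $\overline j\from\overline\Gamma_2\to\overline\Gamma_1$ restricting to the identity on $\Gamma$.

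First I would check that the uniform structure on $\Gamma$ is the same whether inherited from $\overline\Gamma_1$ or from $\overline\Gamma_2$: this is exactly the content of the phrase ``dense uniform sub-directed graph'' in the definition of completion, so $\Gamma$ has a single intrinsic uniformity and the inclusions into both completions are uniform embeddings, hence uniformly continuous. Then I would form the composite $\overline j\circ\overline i\from\overline\Gamma_1\to\overline\Gamma_1$. This is a continuous map of directed graphs (compositions of such are such) and it restricts to the identity on $\Gamma$. But the identity map $\id_{\overline\Gamma_1}$ is also a continuous map of directed graphs $\overline\Gamma_1\to\overline\Gamma_1$ extending the identity on $\Gamma$, and Theorem~\ref{t:Completion} (applied with $\Delta=\overline\Gamma_1$ and $f$ the inclusion $\Gamma\hookrightarrow\overline\Gamma_1$) asserts the extension is \emph{unique}. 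Hence $\overline j\circ\overline i=\id_{\overline\Gamma_1}$. Symmetrically $\overline i\circ\overline j=\id_{\overline\Gamma_2}$.

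Therefore $\overline i$ is a bijection with continuous inverse $\overline j$, i.e. a homeomorphism, and since both $\overline i$ and $\overline i^{-1}=\overline j$ are maps of directed graphs, $\overline i$ is an isomorphism of topological directed graphs extending $\id_\Gamma$, as required.

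The only place requiring any care is the bookkeeping about uniform structures: one must be sure that the hypotheses of Theorem~\ref{t:Completion} are genuinely met, namely that the inclusion of $\Gamma$ into each $\overline\Gamma_k$ is uniformly continuous. This is immediate from the definition of completion (the embedding is as a \emph{uniform} sub-directed graph), so there is really no obstacle here; the argument is the routine ``initial object is unique up to unique isomorphism'' pattern, with Theorem~\ref{t:Completion} playing the role of the universal property.
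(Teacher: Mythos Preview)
Your argument is correct and is precisely the standard ``universal property'' derivation the paper has in mind when it declares the corollary an immediate consequence of Theorem~\ref{t:Completion}; the paper gives no further details beyond that remark. There is nothing to add.
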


We turn now to the question of existence of completions. 
Let $\Gamma$ be a cofinite directed graph and let $I$ be any cofinal set of cofinite entourages of $\Gamma$. 
Note that $(I,\le)$, where ${\le}={\supseteq}$ is the reverse of inclusion, is a directed set. Indeed $\le$ is a partial order on $I$, and 
since $I$ is cofinal, if $R,S\in I$, then there exists $T\in I$ such that $T\subseteq R\cap S$, and so $T\ge R$ and $T\ge S$. 
 
Now for $R,S\in I$ with $R\le S$, the identity map on $\Gamma$ determines a map of directed graphs $\phi_{RS}\from\Gamma/S\to\Gamma/R$ given by 
$\phi_{RS}(S[x])=R[x]$; it is well defined since $S\subseteq R$ and continuous since $\Gamma/S$ has the discrete topology. 
These maps have the properties: (i)~$\phi_{RR}=\id_{\Gamma/R}$ for all $R\in I$; and (ii)~$\phi_{RS}\phi_{ST}=\phi_{RT}$ for all $R\leq S\leq T$ in $I$. 
Hence we have an inverse system of (finite discrete) topological  directed graphs and uniformly continuous maps of directed graphs $(\Gamma/R,\phi_{RS})$, indexed by the directed set $I$. 

Let $\widehat\Gamma=\varprojlim\Gamma/R$ be the inverse limit of this inverse system and denote the projection maps by $\phi_R\from\widehat\Gamma\to\Gamma/R$.  
Then $\widehat\Gamma$ is a topological directed graph and there is a canonical map of directed graphs  $i\from\Gamma\to\widehat\Gamma$ given by $i(a)=(R[a])_{R\in I}$. 
Moreover, $\widehat\Gamma$ is compact and Hausdorff since it is an inverse limit of compact Hausdorff spaces. Thus, as usual, we regard $\widehat\Gamma$ as a uniform space with the unique uniformity compatible with its topology. 

With this setup, we make the following observation. 

\begin{lemma} \label{l:Existence}
If\/ $\Gamma$ is a cofinite directed graph, then 
the inverse limit\/ $\widehat\Gamma=\varprojlim\Gamma/R$ $(R\in I)$ is a compact Hausdorff topological directed graph and the canonical map $i:\Gamma\to\widehat\Gamma$ is a uniform embedding whose image $i(\Gamma)$ is a dense sub-directed graph of\/ $\widehat\Gamma$.
\end{lemma}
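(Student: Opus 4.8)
The plan is to verify each claim about the canonical map $i\from\Gamma\to\widehat\Gamma$ in turn, reducing everything to the defining property of $\Gamma$ that the cofinite entourages in $I$ form a base for its uniformity and that this uniformity is Hausdorff. First, that $\widehat\Gamma$ is a compact Hausdorff topological directed graph is already essentially noted before the lemma: each $\Gamma/R$ is finite discrete, hence a compact Hausdorff topological directed graph, the maps $\phi_{RS}$ are maps of directed graphs, and an inverse limit of such objects (in the product directed graph, which carries coordinatewise source and target) is again a topological directed graph that is compact Hausdorff as a closed subspace of a product of compact Hausdorff spaces. So the main content is that $i$ is a uniform embedding with dense image.

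Next I would check that $i$ is a well-defined map of directed graphs: $i(a) = (R[a])_{R\in I}$ satisfies the compatibility conditions $\phi_{RS}(S[a]) = R[a]$ by construction, so $i(a)\in\widehat\Gamma$; and since each natural map $\nu_R\from\Gamma\to\Gamma/R$ is a map of directed graphs (by the discussion of compatible equivalence relations in Section~\ref{ss:Directed graphs}) and $i$ is the map induced into the inverse limit by the compatible family $(\nu_R)$, $i$ preserves sources and targets. For injectivity: if $i(a)=i(b)$ then $(a,b)\in R$ for every $R\in I$, and since $\bigcap_{R\in I}R=\delta_\Gamma$ (this is the Hausdorff hypothesis, via Proposition~\ref{p:Hausdorff directed graph}, part~(3), together with the fact that $I$ is cofinal among cofinite entourages), we get $a=b$.

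Then I would show $i$ is a uniform embedding, i.e.\ that $i$ induces an isomorphism of uniform spaces from $\Gamma$ onto $i(\Gamma)$ with its subspace uniformity from $\widehat\Gamma$. The uniformity on $\widehat\Gamma$ has as a base the sets $(\phi_R\times\phi_R)^{-1}[\delta_{\Gamma/R}]$ for $R\in I$ (pullbacks of the discrete-diagonal entourages of the finite factors), and one computes directly that $(i\times i)^{-1}\big[(\phi_R\times\phi_R)^{-1}[\delta_{\Gamma/R}]\big] = \ker\nu_R = R$. Since the sets $R$ ($R\in I$) form a base for the uniformity of $\Gamma$, this says precisely that the uniformity on $\Gamma$ is the initial uniformity induced by $i$; combined with injectivity, $i$ is a uniform embedding onto its image. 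Finally, density: for $\xi = (x_R\cdot R)_{R}\in\widehat\Gamma$ and any basic neighborhood determined by a single coordinate $R_0$, pick $a\in\Gamma$ with $\nu_{R_0}(a) = x_{R_0}\cdot R_0$ (possible since $\nu_{R_0}$ is onto); then $\phi_{R_0}(i(a)) = \phi_{R_0}(\xi)$, so $i(a)$ lies in that neighborhood of $\xi$. Hence $i(\Gamma)$ is dense, and being the image of a sub-directed graph under a map of directed graphs it is a sub-directed graph of $\widehat\Gamma$.

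The only genuinely delicate point is the identification $(i\times i)^{-1}[(\phi_R\times\phi_R)^{-1}[\delta_{\Gamma/R}]] = R$ and the accompanying claim that the $(\phi_R\times\phi_R)^{-1}[\delta_{\Gamma/R}]$ form a base for the unique uniformity on the compact space $\widehat\Gamma$ — this uses that a finite discrete uniform space has $\{$diagonal$\}$ as its finest (and only nontrivial) entourage, that inverse-limit uniformities are generated by the pullbacks of the factor uniformities, and that on a compact Hausdorff space this generated uniformity coincides with the unique one; each of these is standard (see \cite[Chapter~II]{nB66} or \cite[Chapter~6]{jK55}), so I expect this to be the main obstacle only in the sense of bookkeeping, not in any essential difficulty. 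Everything else is a routine transcription of the corresponding facts for cofinite groups from~\cite{bH77}.
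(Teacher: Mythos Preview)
Your proposal is correct and follows essentially the same route as the paper's proof: both identify $\ker\phi_R=(\phi_R\times\phi_R)^{-1}[\delta_{\Gamma/R}]$ as a base for the uniformity on $\widehat\Gamma$, compute $(i\times i)^{-1}[\ker\phi_R]=R$, invoke Proposition~\ref{p:Hausdorff directed graph} for injectivity, and prove density by lifting a single coordinate. The only cosmetic difference is that you phrase the uniform-embedding step via the ``initial uniformity plus injectivity'' principle, whereas the paper verifies the two inclusions $(i\times i)^{-1}[\ker\phi_R]=R$ and $(i\times i)[R]=\ker\phi_R\cap(i(\Gamma)\times i(\Gamma))$ separately.
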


\begin{proof}
Since each $\Gamma/R$ has the discrete uniformity, the sets $\ker\phi_R$, as $R$ runs through $I$, is a fundamental system of entourages of $\widehat\Gamma$. 
Also $(i\times i)^{-1}[\ker\phi_R]=R$ for each $R\in I$. It thus follows that $i\from\Gamma\to\widehat\Gamma$ is uniformly continuous. 

Since $\Gamma$ is Hausdorff, $\ker i=\bigcap_{R\in I}R=\delta_{\Gamma}$ by Proposition~\ref{p:Hausdorff directed graph}. Thus $i$ is injective. 
Furthermore, $(i\times i)[R]=\ker\phi_R\cap[i(\Gamma)\times i(\Gamma)]$ for all $R\in I$. Hence $i$ is a uniform isomorphism from $\Gamma$ onto the uniform subspace $i(\Gamma)$ of $\widehat\Gamma$. That is, $i$ is a uniform embedding. 

It remains to show that $i(\Gamma)$ is dense in $\widehat\Gamma$.  
Let $x\in\widehat\Gamma$ and let $U$ be an open subset of $\widehat\Gamma$ containing $x$. Then $(\ker\phi_R)[x]\subseteq U$ for some $R\in I$. 
But  $\phi_R(x)=R[a]=\phi_R(i(a))$ for some $a\in\Gamma$, and so 
$i(a)\in(\ker\phi_R)[x]\subseteq U$. Thus $U\cap i(\Gamma)\ne\emptyset$, and it follows that $i(\Gamma)$ is dense in $\widehat\Gamma$.
\end{proof}

Combining these observations we obtain the following result. 

\begin{theorem}[Existence and uniqueness of completions] \label{t:Existence and uniqueness}
Every cofinite directed graph\/ $\Gamma$ has a completion, and it is unique up to an isomorphism of topological directed graphs extending the identity map on\/~$\Gamma$.
\end{theorem}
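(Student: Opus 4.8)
The plan is to assemble the theorem directly from the pieces already in place. For existence, I would simply invoke Lemma~\ref{l:Existence}: starting from the cofinite directed graph $\Gamma$, choose any cofinal set $I$ of cofinite entourages (for instance the set of all of them), form $\widehat\Gamma=\varprojlim\Gamma/R$ as in the construction preceding that lemma, and observe that Lemma~\ref{l:Existence} tells us $\widehat\Gamma$ is a compact Hausdorff topological directed graph containing $i(\Gamma)$ as a dense uniform sub-directed graph. Identifying $\Gamma$ with its image $i(\Gamma)$ under the uniform embedding $i$, we get that $\widehat\Gamma$ is a completion of $\Gamma$ in the sense of the definition, so a completion exists.

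For uniqueness, I would cite Corollary~\ref{c:Uniqueness of completions}, which was already deduced from Theorem~\ref{t:Completion}: any two completions $\overline\Gamma_1$ and $\overline\Gamma_2$ are related by an isomorphism of topological directed graphs extending $\id_\Gamma$. That is precisely the uniqueness clause in the statement. So the proof is essentially two sentences, each pointing to an established result.

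The only mild subtlety worth a remark is the passage from "$i$ is a uniform embedding with dense image" to "$\Gamma$ \emph{is} a dense uniform sub-directed graph of $\overline\Gamma$": one should note that a uniform embedding lets us transport the structure, i.e. replace $i(\Gamma)$ by $\Gamma$ itself via the uniform isomorphism, so that $\Gamma\subseteq\widehat\Gamma$ literally as a sub-directed graph carrying its original uniformity. This is routine but is the one place where a reader might want a word of justification. I do not anticipate any real obstacle here; the theorem is a packaging of Lemma~\ref{l:Existence} and Corollary~\ref{c:Uniqueness of completions}, and the proof should read as a short synthesis rather than a new argument.

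Concretely, the write-up would be: ``Existence follows from Lemma~\ref{l:Existence}: taking $I$ to be the set of all cofinite entourages of $\Gamma$, the inverse limit $\widehat\Gamma=\varprojlim\Gamma/R$ is a compact Hausdorff topological directed graph and $i\from\Gamma\to\widehat\Gamma$ is a uniform embedding onto a dense sub-directed graph; identifying $\Gamma$ with $i(\Gamma)$ exhibits $\widehat\Gamma$ as a completion of $\Gamma$. Uniqueness up to an isomorphism of topological directed graphs extending $\id_\Gamma$ is Corollary~\ref{c:Uniqueness of completions}.''
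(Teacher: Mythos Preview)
Your proposal is correct and matches the paper's approach exactly: the paper simply states that the theorem follows by combining the preceding observations, namely Lemma~\ref{l:Existence} for existence and Corollary~\ref{c:Uniqueness of completions} for uniqueness. Your remark about identifying $\Gamma$ with $i(\Gamma)$ via the uniform embedding is the only implicit step, and it is indeed routine.
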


\begin{corollary} \label{c:Hausdorff completion}
If\/ $\Gamma$ is a cofinite directed graph, then its completion\/ $\overline\Gamma$ is equal to the Hausdorff completion of the underlying uniform space of\/ $\Gamma$. 
\end{corollary}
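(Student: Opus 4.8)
The \emph{Hausdorff completion} of a uniform space is, by definition, a complete Hausdorff uniform space together with a uniformly continuous map from the original space enjoying the usual universal property; when the original space is itself Hausdorff, it is characterized up to a unique isomorphism (extending the given map) as a complete Hausdorff uniform space that contains the given space as a dense uniform subspace (see, for instance, \cite[Chapter~II]{nB66} or \cite[Chapter~6]{jK55}). The plan is simply to observe that the completion $\overline\Gamma$ furnished by Theorem~\ref{t:Existence and uniqueness} has underlying uniform space possessing exactly these two properties relative to the underlying uniform space of $\Gamma$, and then to invoke uniqueness.

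First, a cofinite directed graph is Hausdorff by definition, so the Hausdorff completion of (the underlying uniform space of) $\Gamma$ is just its completion as a Hausdorff uniform space. Next, $\overline\Gamma$ is compact and Hausdorff, and we regard it, as always, as a uniform space with its unique compatible uniformity; a compact Hausdorff uniform space is complete. Finally, by the definition of a completion, $\Gamma$ is a dense uniform sub-directed graph of $\overline\Gamma$; forgetting the directed-graph structure, the underlying uniform space of $\Gamma$ is therefore a dense uniform subspace of the underlying uniform space of $\overline\Gamma$.

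Thus the underlying uniform space of $\overline\Gamma$ is a complete Hausdorff uniform space inside which the underlying uniform space of $\Gamma$ sits as a dense uniform subspace. By the uniqueness of the Hausdorff completion of a Hausdorff uniform space, it follows that $\overline\Gamma$, viewed as a uniform space, is canonically isomorphic, via an isomorphism extending the identity on $\Gamma$, to the Hausdorff completion of the underlying uniform space of $\Gamma$; this is the assertion of the corollary.

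There is essentially no obstacle here: all the work has already been done in Theorem~\ref{t:Existence and uniqueness}, which produces a completion in the directed-graph sense, and the remaining ingredients are the elementary facts that compact Hausdorff uniform spaces are complete and that Hausdorff completions of Hausdorff uniform spaces are unique. The only point deserving a moment's attention is that ``Hausdorff completion'' reduces to ``completion'' precisely because $\Gamma$ is Hausdorff; as a by-product one sees a posteriori that the Hausdorff completion of the uniform space underlying any cofinite directed graph is compact, which is consistent with the fact that this uniform space is totally bounded, having a fundamental system of entourages consisting of finite-index equivalence relations.
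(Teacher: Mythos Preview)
Your argument is correct and is essentially the same as the paper's: both use that a compact Hausdorff space is complete in its unique compatible uniformity, and then invoke the uniqueness of the Hausdorff completion of a Hausdorff uniform space (the paper citing \cite[II.3.6, Theorem~2]{nB66} directly). Your write-up is simply more explicit about the steps.
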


\begin{proof}
Since a compact Hausdorff space (endowed with its unique compatible uniformity) is  complete, 
this follows from~\cite[II.3.6, Theorem~2]{nB66}.
\end{proof}

In the definition of the completion of a cofinite directed graph $\Gamma$, we did not require that $\overline\Gamma$ be a cofinite directed graph. However, it turns out that this is automatically true. 
In proving this and other things, we modify our notation as follows.  

From now on, if $A$ is a subset of a cofinite directed graph $\Gamma$, then $\overline A$ will denote the closure of $A$ in the completion $\overline\Gamma$.  
Thus the closure of $A$ in $\Gamma$ is given by $\overline A\cap\Gamma$. 
Likewise, if $W\subset\Gamma\times\Gamma$, then $\overline W$ will denote the closure of $W$ in the product space $\overline\Gamma\times\overline\Gamma$; 
and thus $\overline W\cap(\Gamma\times\Gamma)$ is its closure in $\Gamma\times\Gamma$. 
(That is, a bar over a set will mean its closure in the largest available space.)
Note that this convention is consistent with denoting the completion of $\Gamma$ by $\overline\Gamma$ since $\Gamma$ is dense in $\overline\Gamma$. 
It is also consistent with denoting the unique extension given by Theorem~\ref{t:Completion} of a uniformly continuous map $f$ from $\Gamma$ to a compact Hausdorff topological directed graph $\Delta$ by 
$\f\from\overline\Gamma\to\Delta$ since $\f$ is the closure in $\overline\Gamma\times\Delta$ of the subset $f\subseteq\Gamma\times\Delta$. 

We first prove a couple of lemmas.

\begin{lemma} \label{l:Completion of a sub-directed graph}
If\/ $\Gamma$ is a cofinite directed graph and\/ $\Sigma$ is a sub-directed graph of $\Gamma$, then\/ $\overline\Sigma$ is a sub-directed graph of\/ $\overline\Gamma$ and 
$V(\overline\Sigma)=\overline{V(\Sigma)}$. In particular, $V(\overline\Gamma)=\overline{V(\Gamma)}$. 
\end{lemma}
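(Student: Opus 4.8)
The plan is to verify the two claims in turn, using the description of closures in terms of cofinite entourages and the fact (Corollary~\ref{c:Completion}, or rather the extension Theorem~\ref{t:Completion}) that the source and target maps on $\overline\Gamma$ extend those on $\Gamma$. First I would show $\overline\Sigma$ is a sub-directed graph of $\overline\Gamma$, i.e. that it is closed under $s$ and $t$. Let $s,t\from\overline\Gamma\to V(\overline\Gamma)$ be the (unique, continuous) extensions of the source and target maps of $\Gamma$ guaranteed by Theorem~\ref{t:Completion} applied to the inclusion $\Gamma\hookrightarrow\overline\Gamma$ composed with $s,t$; these agree with the source/target structure of $\overline\Gamma$ as a topological directed graph. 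Since $s(\Sigma)\subseteq\Sigma\subseteq\overline\Sigma$ and $s$ is continuous, $s(\overline\Sigma)=s(\overline\Sigma\cap\overline\Gamma)\subseteq\overline{s(\Sigma)}\subseteq\overline\Sigma$, where the bar on $s(\Sigma)$ is its closure in $\overline\Gamma$; similarly for $t$. Hence $\overline\Sigma$ is a sub-directed graph of $\overline\Gamma$.

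Next I would identify the vertex set. The inclusion $\overline{V(\Sigma)}\subseteq V(\overline\Sigma)$ is easy: every point of $V(\Sigma)$ is a vertex of $\overline\Gamma$ (since $V(\Gamma)\subseteq V(\overline\Gamma)$, the source map agreeing), so $V(\Sigma)\subseteq V(\overline\Sigma)$, and $V(\overline\Sigma)=\overline\Sigma\cap V(\overline\Gamma)$ is closed in $\overline\Sigma$, so the closure $\overline{V(\Sigma)}$ of $V(\Sigma)$ is contained in it. For the reverse inclusion, take $x\in V(\overline\Sigma)$, so $x\in\overline\Sigma$ and $s(x)=x$. Using a cofinal set $I$ of cofinite entourages of $\Gamma$ and writing $\overline R$ for the closure of $R$ in $\overline\Gamma\times\overline\Gamma$, we have $\overline\Sigma=\bigcap_{R\in I}\overline R[\Sigma]$ (each $\overline R[\Sigma]$ being a clopen neighbourhood of $\Sigma$, by the closure formulas recorded before Lemma~\ref{l:Completion of a sub-directed graph}). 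So for each $R$ there is $a_R\in\Sigma$ with $(a_R,x)\in\overline R$. Applying $s\times s$: since $R$ is a compatible equivalence relation, $(s\times s)(\overline R)\subseteq\overline R$, hence $(s(a_R),s(x))=(s(a_R),x)\in\overline R$, and $s(a_R)\in V(\Sigma)$ because $a_R\in\Sigma$ and $\Sigma$ is a sub-directed graph. Thus $x\in\overline R[V(\Sigma)]$ for every $R\in I$, so $x\in\bigcap_R\overline R[V(\Sigma)]=\overline{V(\Sigma)}$. This gives $V(\overline\Sigma)=\overline{V(\Sigma)}$, and the final sentence is the case $\Sigma=\Gamma$.

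The step I expect to require the most care is justifying $(s\times s)(\overline R)\subseteq\overline R$ — that the compatibility of $R$ on $\Gamma$ passes to its closure in $\overline\Gamma\times\overline\Gamma$. This should follow because $s\times s\from\overline\Gamma\times\overline\Gamma\to\overline\Gamma\times\overline\Gamma$ is continuous and $\overline R$ is closed, so $(s\times s)(\overline R)\subseteq\overline{(s\times s)(R)}\subseteq\overline R$, using $(s\times s)(R)\subseteq R$ (compatibility of $R$ on $\Gamma$). One small subtlety: I should make sure that the source map of $\overline\Gamma$, viewed as a topological directed graph, really is the Theorem~\ref{t:Completion} extension of the source map of $\Gamma$; this is immediate from the uniqueness clause in that theorem, since the source map of $\overline\Gamma$ restricts to that of $\Gamma$ on the dense subset $\Gamma$ and is continuous. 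Everything else is a routine manipulation of the closure formulas $\overline A=\bigcap_{R\in I}\overline R[A]$ already established in the text.
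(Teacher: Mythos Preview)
Your argument for the first claim and for the inclusion $\overline{V(\Sigma)}\subseteq V(\overline\Sigma)$ matches the paper exactly. For the reverse inclusion, however, the paper is much more direct: since $s$ is surjective onto the vertex set and continuous, one has in a single line
\[
V(\overline\Sigma)=s(\overline\Sigma)\subseteq\overline{s(\Sigma)}=\overline{V(\Sigma)}.
\]
This uses nothing beyond continuity of $s\from\overline\Gamma\to\overline\Gamma$ and the identity $s(\Sigma)=V(\Sigma)$.

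Your entourage-based argument is really the same continuity statement unpacked at the level of basic entourages, so the underlying idea is identical; but as written it leans on the identity $\overline A=\bigcap_{R\in I}\overline R[A]$ for closures \emph{in $\overline\Gamma$}. The closure formulas recorded earlier in the text are for closures in $\Gamma$ (using the entourages $R$ themselves). To use the analogous formula in $\overline\Gamma$ you need to know that the $\overline R$ form a base of entourages for $\overline\Gamma$, which is precisely the content of Lemma~\ref{l:Closure of R} and Theorem~\ref{t:Completion is profinite}, both of which appear \emph{after} this lemma. So your route introduces a forward reference that the paper's one-line argument avoids. There is no mathematical error, but you should either reorder the appeal or, better, replace the entourage computation by the direct $s(\overline\Sigma)\subseteq\overline{s(\Sigma)}$ step.
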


\begin{proof}
Since $s(\Sigma)\subseteq\Sigma$ and the source map is continuous, it follows that 
$s(\overline\Sigma)\subseteq\overline\Sigma$. Similarly, $t(\overline\Sigma)\subseteq\overline\Sigma$, and so $\overline\Sigma$ is a subgraph of $\overline\Gamma$. 
Its vertex set $V(\overline\Sigma)=\overline\Sigma\cap V(\overline\Gamma)$ is a closed set containing $V(\Sigma)$, and thus $\overline{V(\Sigma)}\subseteq V(\overline\Sigma)$.
On the other hand, $V(\overline\Sigma)=s(\overline\Sigma)\subseteq\overline{s(\Sigma)}
=\overline{V(\Sigma)}$. Hence $V(\overline\Sigma)=\overline{V(\Sigma)}$.
\end{proof}

It follows from this lemma that if $\Gamma$ is a cofinite directed graph and $\Sigma$ is a sub-directed graph (endowed with the relative uniformity), 
then $\overline\Sigma$ (endowed with the relative topology)  is the completion of $\Sigma$.

\begin{lemma} \label{l:Closure of R}
Let\/ $\Gamma$ be a cofinite directed graph and let\/ $\overline\Gamma$ be its completion. 
If $R$ is a cofinite entourage of\/ $\Gamma$, then its closure $\overline R$ in\/ 
$\overline\Gamma\times\overline\Gamma$ is a cofinite entourage of\/  $\overline\Gamma$ and $\overline R\cap(\Gamma\times\Gamma)=R$.
\end{lemma}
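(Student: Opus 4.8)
The plan is to identify $\overline R$ with the kernel of the canonical extension to $\overline\Gamma$ of the quotient map $\nu_R\from\Gamma\to\Gamma/R$. First I would note that $\nu_R$ is a uniformly continuous map of directed graphs, since $\ker\nu_R=R$ is an entourage of $\Gamma$ and $\Gamma/R$ carries the discrete uniformity, and that $\Gamma/R$ is a finite discrete --- hence compact Hausdorff --- topological directed graph. Theorem~\ref{t:Completion} then yields a unique (uniformly) continuous map of directed graphs $\overline{\nu_R}\from\overline\Gamma\to\Gamma/R$ extending $\nu_R$, and I would set $K=\ker\overline{\nu_R}$. Since the diagonal $\delta_{\Gamma/R}$ is a cofinite entourage of $\Gamma/R$ (it is the equality relation on a finite discrete directed graph) and $K=(\overline{\nu_R}\times\overline{\nu_R})^{-1}[\delta_{\Gamma/R}]$, Lemma~\ref{l:Properties of cofinite entourages}(5) immediately gives that $K$ is a cofinite entourage of $\overline\Gamma$. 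It is also straightforward that $K\cap(\Gamma\times\Gamma)=R$, because for $x,y\in\Gamma$ we have $(x,y)\in K\iff\overline{\nu_R}(x)=\overline{\nu_R}(y)\iff\nu_R(x)=\nu_R(y)\iff(x,y)\in R$.

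It then remains to show $\overline R=K$, which would finish the proof. One inclusion is free: $R\subseteq K$ and $K$ is closed in $\overline\Gamma\times\overline\Gamma$ by Lemma~\ref{l:Properties of cofinite entourages}(2), so $\overline R\subseteq K$. For the reverse inclusion I would exploit that $K$ has finite index. Writing $C_1,\dots,C_n$ for its equivalence classes --- each clopen in $\overline\Gamma$ by Lemma~\ref{l:Properties of cofinite entourages}(1), so that $K=\bigcup_{i=1}^n C_i\times C_i$ --- each $C_i$ is open and $\Gamma$ is dense in $\overline\Gamma$, hence $C_i\cap\Gamma\neq\emptyset$; picking $a_i\in C_i\cap\Gamma$ and using $K\cap(\Gamma\times\Gamma)=R$ gives $C_i\cap\Gamma=K[a_i]\cap\Gamma=R[a_i]$, whence $R=\bigcup_{i=1}^n(C_i\cap\Gamma)\times(C_i\cap\Gamma)$. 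A short density argument then forces $\overline{C_i\cap\Gamma}=C_i$ for each $i$: the closures $\overline{C_i\cap\Gamma}$ are pairwise disjoint, each is contained in $C_i$, and their union equals $\overline{\bigcup_i(C_i\cap\Gamma)}=\overline\Gamma$, which is partitioned by the $C_i$. Since closure commutes with finite unions and with finite products,
$$\overline R=\overline{\bigcup_{i=1}^n(C_i\cap\Gamma)\times(C_i\cap\Gamma)}=\bigcup_{i=1}^n\overline{C_i\cap\Gamma}\times\overline{C_i\cap\Gamma}=\bigcup_{i=1}^n C_i\times C_i=K.$$

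The step I expect to be the real obstacle is the inclusion $K\subseteq\overline R$ --- that is, that passing to the completion creates no pairs in $K$ lying outside the closure of $R$. This is precisely where compactness (finiteness of the index of $K$) and the density of $\Gamma$ in $\overline\Gamma$ enter. The temptation to bypass $\overline{\nu_R}$ and argue directly that $\overline R$ itself is a cofinite entourage founders on transitivity, since closures of relations need not compose; working with the kernel of $\overline{\nu_R}$, which is an equivalence relation by construction, cleanly avoids this difficulty.
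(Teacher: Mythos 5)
Your proposal is correct and follows essentially the same route as the paper: both identify $\overline R$ with the kernel of the unique continuous extension $\overline{\nu_R}\from\overline\Gamma\to\Gamma/R$ of the quotient map, observe that this kernel is a cofinite entourage restricting to $R$ on $\Gamma\times\Gamma$, and then show it equals $\overline R$. The only difference is cosmetic: for the inclusion $K\subseteq\overline R$ the paper argues directly that every open neighborhood of a point of the open set $K$ meets $K\cap(\Gamma\times\Gamma)=R$ by density of $\Gamma\times\Gamma$, whereas you decompose $K$ into its finitely many clopen classes and use that closure commutes with finite unions and products --- both hinge on exactly the same two facts (openness of the kernel and density of $\Gamma$).
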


\begin{proof}
The natural map $\nu\from\Gamma\to\Gamma/R$ is a uniformly continuous map of directed graphs, and so by Theorem~\ref{t:Completion}, it extends to a uniformly continuous map of directed graphs $\overline\nu\from\overline\Gamma\to\Gamma/R$. 
Note that $\ker\overline\nu$ is a cofinite entourage of $\overline\Gamma$ since $\Gamma/R$ is discrete. Also note that $\ker\overline\nu\cap(\Gamma\times\Gamma)=\ker\nu=R$. 
To complete the proof, it suffices to show that $\ker\overline\nu=\overline R$.

First note that $R=\ker\nu\subseteq\ker\overline\nu$ and $\ker\overline\nu$ is closed in 
$\overline\Gamma\times\overline\Gamma$. Thus $\overline R\subseteq\ker\overline\nu$. 
Reversely, let $(x,y)\in\ker\overline\nu$ and let $W$ be an open neighborhood of $(x,y)$ in 
$\overline\Gamma\times\overline\Gamma$. Then 
$$
W\cap R=W\cap\ker\nu=W\cap\ker\overline\nu\cap(\Gamma\times\Gamma)\ne\emptyset
$$
since $W\cap\ker\overline\nu$ is an open subset of $\overline\Gamma\times\overline\Gamma$ and $\Gamma\times\Gamma$ is dense in 
$\overline\Gamma\times\overline\Gamma$. 
So $(x,y)\in\overline R$, and thus the opposite inclusion 
$\ker\overline\nu\subseteq\overline R$ also holds. 
\end{proof}

\begin{theorem} \label{t:Completion is profinite}
Let\/ $\Gamma$ be a cofinite directed graph. Then its completion\/ $\overline\Gamma$ is a profinite directed graph and the cofinite entourages of\/ $\overline\Gamma$ are precisely all the $\overline R$, where $R$ runs through all cofinite entourages 
of\/ $\Gamma$.
\end{theorem}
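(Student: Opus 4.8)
\section*{Proof proposal}

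The plan is to deduce profiniteness of $\overline\Gamma$ from Theorem~\ref{t:Profinite directed graph} once we know its cofinite entourages form a base for its uniformity, and then to identify those cofinite entourages by combining Lemma~\ref{l:Closure of R} with a short density argument. The first task is the delicate one, so I would address it by realizing the completion concretely.

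Fix a base $I$ of cofinite entourages of $\Gamma$ (such a base is automatically cofinal in the set of all cofinite entourages of $\Gamma$ ordered by reverse inclusion), and by Theorem~\ref{t:Existence and uniqueness} we may take $\overline\Gamma=\widehat\Gamma=\varprojlim\Gamma/R$ $(R\in I)$ as in Lemma~\ref{l:Existence}, identifying $\Gamma$ with its dense image $i(\Gamma)$. From the proof of Lemma~\ref{l:Existence}, the kernels $\ker\phi_R$ of the projections $\phi_R\from\overline\Gamma\to\Gamma/R$ form a fundamental system of entourages of $\overline\Gamma$. Since $\phi_R\circ i$ is the natural map $\nu\from\Gamma\to\Gamma/R$ and $\Gamma/R$ is a finite discrete topological directed graph, the uniqueness part of Theorem~\ref{t:Completion} identifies $\phi_R$ with the extension $\overline\nu$ used in the proof of Lemma~\ref{l:Closure of R}; hence $\ker\phi_R=\ker\overline\nu=\overline R$. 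Thus $\{\overline R\mid R\in I\}$ is a fundamental system of entourages of $\overline\Gamma$, and by Lemma~\ref{l:Closure of R} each $\overline R$ is a cofinite entourage of $\overline\Gamma$. Therefore the set of cofinite entourages of $\overline\Gamma$ is a base for the uniformity on $\overline\Gamma$, and since $\overline\Gamma$ is compact Hausdorff, Theorem~\ref{t:Profinite directed graph} shows it is a profinite directed graph.

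For the second assertion, that each $\overline R$ is a cofinite entourage of $\overline\Gamma$ is Lemma~\ref{l:Closure of R}. Conversely, let $S$ be an arbitrary cofinite entourage of $\overline\Gamma$. Since $\Gamma$ is a sub-directed graph of $\overline\Gamma$, Lemma~\ref{l:Properties of cofinite entourages}(6) gives that $R:=S\cap(\Gamma\times\Gamma)$ is a cofinite entourage of $\Gamma$. By Lemma~\ref{l:Properties of cofinite entourages}(2), $S$ is clopen, hence closed, in $\overline\Gamma\times\overline\Gamma$, so $R\subseteq S$ forces $\overline R\subseteq S$. On the other hand $S$ is open and $\Gamma\times\Gamma$ is dense in $\overline\Gamma\times\overline\Gamma$, so $R=S\cap(\Gamma\times\Gamma)$ is dense in the open set $S$, whence $S\subseteq\overline R$. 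Therefore $S=\overline R$, and the cofinite entourages of $\overline\Gamma$ are exactly the sets $\overline R$ with $R$ a cofinite entourage of $\Gamma$. The main obstacle is the middle step: linking the uniformity produced by the abstract completion to the closures of the cofinite entourages of $\Gamma$; handling it through the explicit inverse-limit picture (identifying $\ker\phi_R$ with $\overline R$ via uniqueness of extensions) keeps everything internal to the paper, after which Lemma~\ref{l:Closure of R}, clopenness of cofinite entourages, and the elementary fact that an open set lies in the closure of its intersection with a dense set finish the argument routinely.
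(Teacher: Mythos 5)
Your proof is correct, and its second half (identifying an arbitrary cofinite entourage $S$ of $\overline\Gamma$ with $\overline{S\cap(\Gamma\times\Gamma)}$ via clopenness and density) is essentially identical to the paper's. The first half, however, takes a genuinely different route. The paper argues intrinsically in the abstract completion: given an entourage $U$ of $\overline\Gamma$, it chooses a \emph{closed} entourage $W\subseteq U$ (Bourbaki), restricts to get an entourage $W\cap(\Gamma\times\Gamma)$ of $\Gamma$, finds a cofinite entourage $R$ of $\Gamma$ inside it, and concludes $\overline R\subseteq W\subseteq U$ from closedness of $W$; profiniteness then follows from Theorem~\ref{t:Profinite directed graph}. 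You instead realize $\overline\Gamma$ as the inverse limit $\varprojlim\Gamma/R$ via Lemma~\ref{l:Existence} and uniqueness of completions, observe that the kernels $\ker\phi_R$ form a fundamental system of entourages there, and identify $\ker\phi_R=\ker\overline\nu=\overline R$ using the uniqueness clause of Theorem~\ref{t:Completion} together with the computation inside the proof of Lemma~\ref{l:Closure of R}. Both arguments are sound. The paper's is shorter and stays within the abstract completion, at the cost of invoking the fact that closed entourages form a base of the uniformity of a uniform space; yours avoids that fact but leans on the concrete inverse-limit model and on details drawn from the \emph{proofs} (not merely the statements) of Lemma~\ref{l:Existence} and Lemma~\ref{l:Closure of R}, plus one more appeal to uniqueness of extensions. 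Since all of those ingredients are already established in the paper, your chain of reasoning closes without gaps.
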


\begin{proof}
Let $U$ be an entourage of $\overline\Gamma$. Choose a closed entourage $W$ such that $W\subseteq U$; see, for example, \cite[II.1.2, Corollary~2]{nB66}. Since $W\cap(\Gamma\times\Gamma)$ is an entourage of $\Gamma$, there exists a cofinite entourage $R$ of $\Gamma$ such that 
$R\subseteq W\cap(\Gamma\times\Gamma)$. By Lemma~\ref{l:Closure of R}, $\overline R$ is a cofinite entourage of $\overline\Gamma$ and $\overline R\subseteq W\subseteq U$ since $W$ is closed. 
It follows that the cofinite entourages of $\overline\Gamma$ form a fundamental system of entourages of $\overline\Gamma$. Therefore $\overline\Gamma$ is a profinite directed graph by Theorem~\ref{t:Profinite directed graph}.

Now let $S$ be any cofinite entourage of $\overline\Gamma$ and put $R=S\cap(\Gamma\times\Gamma)$. 
Then $R$ is a cofinite entourage of $\Gamma$ and it is dense in $S$, since $S$ is an open subset of $\overline\Gamma\times\overline\Gamma$ and $\Gamma\times\Gamma$ is dense in  $\overline\Gamma\times\overline\Gamma$. 
Thus, $S=\overline R\cap S=\overline R$ since $S$ is also closed. 
Conversely, if $R$ is a cofinite entourage of $\Gamma$, then $\overline R$ is a cofinite entourage of $\overline\Gamma$ by Lemma~\ref{l:Closure of R}.
\end{proof}

\begin{corollary} \label{c:Discrete quotients}
If\/ $\Gamma$ is a cofinite directed graph and $R$ is a cofinite entourage of\/ $\Gamma$, then the inclusion map $\Gamma\hookrightarrow\overline\Gamma$ induces an isomorphism of directed graphs $\Gamma/R\to\overline\Gamma/\overline R$ (which is also a homeomorphism).
\end{corollary}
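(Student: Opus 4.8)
The plan is to construct the desired isomorphism by applying the first isomorphism theorem for directed graphs (from Section~\ref{ss:Directed graphs}) to the extended quotient map $\overline\nu\from\overline\Gamma\to\Gamma/R$ that appeared in the proof of Lemma~\ref{l:Closure of R}. Recall that $\nu\from\Gamma\to\Gamma/R$ extends, by Theorem~\ref{t:Completion}, to a continuous map of directed graphs $\overline\nu\from\overline\Gamma\to\Gamma/R$, and that the proof of Lemma~\ref{l:Closure of R} shows $\ker\overline\nu=\overline R$. First I would observe that $\overline\nu$ is surjective: its image contains $\nu(\Gamma)=\Gamma/R$, so it is onto. Hence by the first isomorphism theorem there is an isomorphism of directed graphs $\overline\Gamma/\overline R\to\Gamma/R$ sending $\overline R[x]\mapsto\overline\nu(x)$. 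Composing with the inverse gives an isomorphism $\Gamma/R\to\overline\Gamma/\overline R$, and I would check that it is the map induced by the inclusion $\Gamma\hookrightarrow\overline\Gamma$, i.e.\ it sends $R[a]\mapsto\overline R[a]$ for $a\in\Gamma$: indeed $\overline\nu(a)=\nu(a)=R[a]$, so the isomorphism $\overline\Gamma/\overline R\to\Gamma/R$ sends $\overline R[a]\mapsto R[a]$, which is exactly the inverse of the claimed map.

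Next I would address the topological claim. Both $\Gamma/R$ and $\overline\Gamma/\overline R$ carry the discrete topology: the former because $R$ is a cofinite entourage of $\Gamma$, the latter because $\overline R$ is a cofinite entourage of $\overline\Gamma$ by Lemma~\ref{l:Closure of R} (both quotients are finite discrete spaces by Lemma~\ref{l:Co-discrete}, using compactness of $\overline\Gamma$, or simply by finiteness of index). Any bijection between discrete spaces is automatically a homeomorphism, so the isomorphism of directed graphs just constructed is simultaneously a homeomorphism, completing the proof. Alternatively, and perhaps more cleanly, one can note that $\overline\nu$ is a continuous surjection from a compact space onto a (discrete, hence Hausdorff) space, so it is a quotient map, and a continuous bijection from a compact space to a Hausdorff space is a homeomorphism; applied to the induced map $\overline\Gamma/\overline R\to\Gamma/R$ this gives the homeomorphism directly.

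I do not anticipate a serious obstacle here: the statement is essentially a bookkeeping corollary assembling Theorem~\ref{t:Completion}, Lemma~\ref{l:Closure of R}, and the first isomorphism theorem. The one point requiring a little care is verifying that the abstract isomorphism produced by the first isomorphism theorem really is ``the map induced by the inclusion'' — that is, checking the two relevant maps agree on the dense subset $\Gamma$ and then using continuity (or just the explicit formula $\overline R[a]\mapsto R[a]$) to identify them. A second minor point is making sure we invoke the right version of ``discrete'': since $R$ and $\overline R$ have finite index by hypothesis and by Lemma~\ref{l:Co-discrete} respectively, the quotients are finite discrete directed graphs, so no subtlety about the quotient topology arises.
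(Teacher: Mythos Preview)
Your proof is correct and uses the same ingredients as the paper's---the first isomorphism theorem together with Lemma~\ref{l:Closure of R}---though applied in the opposite direction: the paper applies the first isomorphism theorem directly to the map $f\from\Gamma\to\overline\Gamma/\overline R$ induced by the inclusion (noting $\ker f=\overline R\cap(\Gamma\times\Gamma)=R$ and that $f$ is onto by density of $\Gamma$), whereas you factor $\overline\nu\from\overline\Gamma\to\Gamma/R$ and then invert. The paper's route is marginally more direct since it avoids the extra verification that the inverted isomorphism coincides with the inclusion-induced map, but the two arguments are otherwise the same.
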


\begin{proof}
Let $f\from\Gamma\to\overline\Gamma/\overline R$ be the map of directed graphs given by $f(x)=\overline R[x]$, i.e., the map induced by the inclusion map $\Gamma\hookrightarrow\overline\Gamma$. Note that $f$ is onto since $\Gamma$ is dense in $\overline\Gamma$, and $\ker f=\overline R\cap(\Gamma\times\Gamma)=R$ by Lemma~\ref{l:Closure of R}. Thus, by the first isomorphism theorem for directed graphs, there is an isomorphism of directed  graphs 
$f'\from\Gamma/R\to\overline\Gamma/\overline R$ such that the following diagram commutes:
$$\begindc{\commdiag}[30]
\obj(0,0)[R]{$\Gamma/R$}
\obj(10,5)[Y]{$\overline\Gamma/\overline R$}
\obj(0,5)[X]{$\Gamma$}
\mor{X}{R}{$q$}[\atright,\solidarrow]
\mor{X}{Y}{$f$}
\mor{R}{Y}{$f'$}[\atright,\dashArrow]
\enddc$$
and $f'$ is also a homeomorphism since $\Gamma/R$ and $\overline\Gamma/\overline R$ are discrete.
\end{proof}

%%%%%%%%%%%%%%% Applications

\section{Applications} \label{s:Applications}

\subsection{Cofinite spaces}

A {\it cofinite space\/} is a cofinite directed graph $X$ with $V(X)=X$. In this case, the source and target maps are equal to the identity map on $X$, and so are irrelevant. 
The completion $\overline X$ of a cofinite space $X$ is also a cofinite space since 
$V(\overline X)=\overline{V(X)}=\overline X$ 
(by Lemma~\ref{l:Completion of a sub-directed graph}), and it is compact. Thus $\overline X$ is a profinite space (i.e., a compact Hausdorff totally disconnected space).

%%%%%%%%%%%%%%%%  Cofinite graphs

\subsection{Cofinite graphs}

By a graph we mean a graph in the sense of Serre~\cite{jS80}. That is, a {\it graph\/} is a directed graph $\Gamma$ equipped with an involution, denoted $x\mapsto x^{-1}$, satisfying the conditions:
$$
\hbox{$s(x^{-1})=t(x)$, $t(x^{-1})=s(x)$, $(x^{-1})^{-1}=x$, 
and $x^{-1}=x\Leftrightarrow x\in V(\Gamma)$}.
$$
Note that if $x$ is an edge of $\Gamma$, then $x^{-1}$ is also an edge and $x^{-1}\ne x$. 
(We call a function on a directed graph satisfying these conditions an {\it involution without fixed edges}.) 
Our terminology and notation for graphs essentially follows that of~\cite{jS83}. 

If $\Gamma$ and $\Delta$ are graphs, a {\it map of graphs\/} $f\from\Gamma\to\Delta$ is a a map of directed graphs that preserves inverses: $f(s(x))=s(f(x))$, $f(t(x))=t(f(x))$, and $f(x^{-1})=f(x)^{-1}$ for all $x\in\Gamma$. 

Let $R$ be an equivalence relation on a graph $\Gamma$. We say that $R$ is a {\it graph equivalence relation\/} if $\Gamma/R$ admits the structure of a graph such that the natural map $\nu\from\Gamma\to\Gamma/R$ is a map of graphs. In this case, it is clear that the graph structure on $\Gamma/R$ is unique and given by: 
$s(R[x])=R[s(x)]$, $t(R[x])=R[t(x)]$, $R[x]^{-1}=R[x^{-1}]$, and 
$V(\Gamma/R)=\nu(V(\Gamma))$. 
It is also easy to see that $R$ is a graph equivalence relation if and only if these two conditions hold:
\begin{itemize}
\item if $(x,y)\in R$, then $(s(x),s(y)),(t(x),t(y)),(x^{-1},y^{-1})\in R$;
\item if $(x,x^{-1})\in R$, then $x\in R[V(\Gamma)]$.
\end{itemize}

\begin{definition}[Cofinite graph] \label{d:Cofinite graph} \rm
Let $\Gamma$ be a graph endowed with a uniformity. A {\it cofinite graph entourage\/} of $\Gamma$ is a graph equivalence relation $R$ of finite index on $\Gamma$  which is also an entourage of $\Gamma$. We say that $\Gamma$ is a {\it cofinite graph\/} if it is Hausdorff and its uniformity has a base consisting of cofinite graph entourages of~$\Gamma$.  
\end{definition}

In particular, a cofinite graph $\Gamma$ is a cofinite directed graph. So by Lemma~\ref{l:Uniformly continuous source and target}, the source and target maps of a cofinite graph $\Gamma$ are uniformly continuous, and by a similar argument, its involution is also uniformly continuous. Thus $\Gamma$ is a Hausdorff (uniform) topological graph. 
Furthermore, by Theorems~\ref{t:Completion is profinite}, its completion $\overline\Gamma$ is a compact cofinite directed graph (i.e., a profinite directed graph).  
We show next that there is a unique way to make $\overline\Gamma$ into a cofinite graph such that $\Gamma$ is a subgraph. 

\begin{theorem} \label{t:Cofinite graph}
If\/ $\Gamma$ is a cofinite graph, then its completion\/ $\overline\Gamma$ has a unique involution making it into a  cofinite graph such that\/ $\Gamma$ is a subgraph. 
Moreover, the cofinite graph entourages of\/ $\overline\Gamma$ are precisely the closures $\overline R$ in\/ $\overline\Gamma\times\overline\Gamma$ of all cofinite graph entourages $R$ of\/ $\Gamma$.
\end{theorem}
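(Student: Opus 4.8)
The plan is to exploit the fact, already established in Theorem~\ref{t:Completion is profinite}, that $\overline\Gamma$ is a profinite directed graph and that its cofinite entourages are exactly the closures $\overline R$ of the cofinite entourages $R$ of $\Gamma$. The extra structure to be carried across is the involution. First I would observe that the involution $\iota\from\Gamma\to\Gamma$, $x\mapsto x^{-1}$, is a map of directed graphs from $\Gamma$ to the \emph{opposite} directed graph (or, more conveniently, one notes directly that $\iota$ is uniformly continuous: for any cofinite graph entourage $R$ one has $(\iota\times\iota)[R]\subseteq R$ by the first defining condition of a graph equivalence relation, and the cofinite graph entourages form a base). Since $\overline\Gamma$ is compact Hausdorff, Theorem~\ref{t:Completion} (applied with the target being $\overline\Gamma$ itself, after composing $\iota$ with the inclusion $\Gamma\hookrightarrow\overline\Gamma$) yields a unique continuous map of directed graphs $\overline\iota\from\overline\Gamma\to\overline\Gamma$ extending $\iota$; moreover $\overline\iota$ is uniformly continuous.

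Next I would verify that $\overline\iota$ is an involution without fixed edges, i.e.\ that it satisfies all four of Serre's conditions. The identities $\overline\iota\circ\overline\iota=\id$, $s\circ\overline\iota=t$, and $t\circ\overline\iota=s$ are each equalities between two continuous maps $\overline\Gamma\to\overline\Gamma$ (resp.\ $\overline\Gamma\to V(\overline\Gamma)$) that agree on the dense subset $\Gamma$; since $\overline\Gamma$ is Hausdorff, the coincidence sets are closed and hence all of $\overline\Gamma$ — the same density argument used for the source/target preservation in the proof of Theorem~\ref{t:Completion}. The remaining condition, $\overline\iota(x)=x \Leftrightarrow x\in V(\overline\Gamma)$, needs a separate argument: the implication $\Leftarrow$ is immediate since $\overline\iota$ restricts to the identity on $V(\overline\Gamma)=\overline{V(\Gamma)}$ (by Lemma~\ref{l:Completion of a sub-directed graph} and continuity). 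For $\Rightarrow$, suppose $\overline\iota(x)=x$ with $x\notin V(\overline\Gamma)$; I would pick, using Lemma~\ref{l:Closure of R} and the fact that $\overline\Gamma$ is profinite, a cofinite graph entourage $\overline R$ of $\overline\Gamma$ small enough that $\overline R[x]\cap V(\overline\Gamma)=\emptyset$ (possible because $V(\overline\Gamma)$ is closed and the $\overline R[x]$ form a neighborhood base at $x$; here I must first know that $\overline\Gamma$ even \emph{has} cofinite \emph{graph} entourages, which is the content of the second sentence of the theorem, so the argument should be arranged to establish the descriptions of the entourages first). Then in the finite quotient graph $\overline\Gamma/\overline R\cong\Gamma/R$ the class of $x$ is a fixed edge of the induced involution, contradicting that $R$ is a graph equivalence relation (second defining condition: $(x,x^{-1})\in R$ forces $x\in R[V(\Gamma)]$). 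This is the step I expect to be the main obstacle — it is where the "no fixed edges" hypothesis genuinely gets used, and it requires having the entourage description of $\overline\Gamma$ in hand, so the logical order within the proof matters.

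For the entourage description, I would argue as follows. If $R$ is a cofinite graph entourage of $\Gamma$, then $\nu\from\Gamma\to\Gamma/R$ is a map of graphs to a finite discrete graph, so by Theorem~\ref{t:Completion} it extends to a continuous map of directed graphs $\overline\nu\from\overline\Gamma\to\Gamma/R$; once $\overline\Gamma$ is known to carry the involution $\overline\iota$, the same density argument shows $\overline\nu$ preserves inverses, so $\ker\overline\nu$ is a graph equivalence relation of finite index, and by Lemma~\ref{l:Closure of R} it equals $\overline R$ and satisfies $\overline R\cap(\Gamma\times\Gamma)=R$. Conversely, if $S$ is any cofinite graph entourage of $\overline\Gamma$, then $R:=S\cap(\Gamma\times\Gamma)$ is a cofinite graph entourage of $\Gamma$ (restriction of a graph equivalence relation to a subgraph is one, and it is an entourage of $\Gamma$ by the relative uniformity), and $R$ is dense in the open set $S$, so $S=\overline R$ exactly as in the last paragraph of the proof of Theorem~\ref{t:Completion is profinite}. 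Finally, uniqueness of the involution on $\overline\Gamma$: any two involutions making $\overline\Gamma$ a cofinite graph with $\Gamma$ a subgraph are continuous maps $\overline\Gamma\to\overline\Gamma$ agreeing with $x\mapsto x^{-1}$ on the dense set $\Gamma$, hence equal; and the cofinite graph entourages of $\overline\Gamma$ form a base for its (unique) uniformity because they are cofinal among its cofinite entourages, which by Theorem~\ref{t:Profinite directed graph} already form a base — so $\overline\Gamma$ with $\overline\iota$ is indeed a cofinite graph.
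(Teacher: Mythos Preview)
Your proposal is correct and follows essentially the same route as the paper: extend the involution by Theorem~\ref{t:Completion}, transfer the three algebraic identities by density, handle the ``no fixed edges'' condition via the finite quotients $\Gamma/R$, and derive the entourage description from Lemma~\ref{l:Closure of R} and Corollary~\ref{c:Discrete quotients}.

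The one place where your account differs from the paper is the logical ordering you flag as ``the main obstacle.'' You suggest establishing the entourage description of $\overline\Gamma$ \emph{before} proving that $\overline\iota$ has no fixed edges, because your contradiction argument wants a cofinite \emph{graph} entourage $\overline R$ of $\overline\Gamma$. The paper sidesteps this apparent circularity: it works directly with a cofinite graph entourage $R$ of $\Gamma$ (not of $\overline\Gamma$) and the extended quotient map $\overline\nu\from\overline\Gamma\to\Gamma/R$, observing that $\overline\nu$ is involution-preserving by density. Then $x=x^{-1}$ forces $\overline\nu(x)=\overline\nu(x)^{-1}$ in the finite graph $\Gamma/R$, hence $\overline\nu(x)\in V(\Gamma/R)$, i.e.\ $x\in\overline R[V(\Gamma)]$; intersecting over all such $R$ gives $x\in\overline{V(\Gamma)}=V(\overline\Gamma)$. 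This is your argument in direct form rather than by contradiction, and it never needs to know in advance that $\overline R$ is a \emph{graph} entourage of $\overline\Gamma$ --- only that the $\overline R$ form a base for the uniformity on $\overline\Gamma$, which is already available from Theorem~\ref{t:Completion is profinite}. So the circularity you were worried about is not actually present, and the paper's order (involution first, entourage description second) goes through cleanly.
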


\begin{proof}
Applying Theorem~\ref{t:Completion} to the composition $\Gamma\to\Gamma\hookrightarrow\overline\Gamma$ of the involution on $\Gamma$ and the inclusion map, we see that there is a unique continuous extension $\overline\Gamma\to\overline\Gamma$ which we also denote by $x\mapsto x^{-1}$. 
Since $\Gamma$ is dense in $\overline\Gamma$, it is clear that this map on $\overline\Gamma$ inherits the properties: $s(x^{-1})=t(x)$, $t(x^{-1})=s(x)$, and $(x^{-1})^{-1}=x$. 

It remains to show that the fixed set $A=\{x\in\overline\Gamma\mid x=x^{-1}\}$ of this involution is equal to $V(\overline\Gamma)$. 
Note first that $A$ is a closed subset of $\overline\Gamma$ containing $V(\Gamma)$, and thus $V(\overline\Gamma)=\overline{V(\Gamma)}\subseteq A$ 
(using Lemma~\ref{l:Completion of a sub-directed graph}). 
To see the opposite inclusion, let $x\in A$ and let $R$ be a cofinite graph entourage of $\Gamma$. Then the natural map $\nu\from\Gamma\to\Gamma/R$ is a map of graphs, and in particular is involution-preserving. Since $\Gamma$ is dense in $\overline\Gamma$, it is clear that the unique continuous extension $\overline\nu\from\overline\Gamma\to\Gamma/R$ is also involution-preserving.  
Thus $\overline\nu(x)=\overline\nu(x^{-1})=\overline\nu(x)^{-1}$. 
However $\Gamma/R$ is a graph, and so $\overline\nu(x)$ is a vertex of $\Gamma/R$. 
Since $\overline R=\ker\overline\nu$ (by the proof of Lemma~\ref{l:Closure of R}) and the vertex set of $\Gamma/R$ is $\nu(V(\Gamma))$, 
it follows that $x\in\overline R[V(\Gamma)]$. 
However the set of closures $\overline R$ of all cofinite graph entourages $R$ of $\Gamma$ form a base for the uniformity on $\overline\Gamma$. Consequently $x\in\overline{V(\Gamma)}$, and so $x\in V(\overline\Gamma)$ 
by Lemma~\ref{l:Completion of a sub-directed graph}. 

To establish the second part, it suffices (by Theorem~\ref{t:Completion is profinite}) to show that for each cofinite entourage $R$ of $\Gamma$, $R$ is a graph equivalence relation on $\Gamma$ if and only if its closure $\overline R$ is a graph equivalence relation on $\overline\Gamma$.
However, this is an immediate consequence of Corollary~\ref{c:Discrete quotients} since it is clear that the induced isomorphism of directed graphs 
$\Gamma/R\to\overline\Gamma/\overline R$ is also involution-preserving. 
Thus $\nu\from\Gamma\to\Gamma/R$ is a map of graphs if and only if 
$\overline\nu\from\overline\Gamma\to\overline\Gamma/\overline R$ is a map of graphs. 
\end{proof}

\begin{corollary}
If $\Gamma$ is a cofinite graph, then its completion $\overline\Gamma$ is a compact Hausdorff totally disconnected topological graph. 
\end{corollary}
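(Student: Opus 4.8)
The plan is simply to assemble facts already in hand. By Theorem~\ref{t:Cofinite graph}, the completion $\overline\Gamma$ carries a (unique) involution making it a cofinite graph with $\Gamma$ as a subgraph; in particular $\overline\Gamma$ is a cofinite directed graph. Since $\overline\Gamma$ is by definition a completion, it is a compact Hausdorff topological directed graph, so compactness and the Hausdorff property are immediate.

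Next I would invoke the structure theory of cofinite directed graphs developed in Section~\ref{s:Cofinite}: by Proposition~\ref{p:Hausdorff directed graph} (and the remark following it, that every cofinite directed graph is totally disconnected), the space $\overline\Gamma$ is totally disconnected. Finally, to upgrade ``topological directed graph'' to ``topological graph'', recall that by Lemma~\ref{l:Uniformly continuous source and target} the source and target maps of the cofinite directed graph $\overline\Gamma$ are uniformly continuous, and by the argument sketched after Definition~\ref{d:Cofinite graph} the involution of any cofinite graph is uniformly continuous as well; uniformly continuous maps are continuous, so $\overline\Gamma$ is a topological graph in the sense of Serre.

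Combining these observations, $\overline\Gamma$ is a compact Hausdorff totally disconnected topological graph, as claimed. I do not expect any genuine obstacle here: all the substance is contained in Theorem~\ref{t:Cofinite graph} together with the basic properties of cofinite directed graphs, and this corollary merely records the consequences.
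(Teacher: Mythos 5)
Your proof is correct and follows essentially the same route as the paper: invoke Theorem~\ref{t:Cofinite graph} to get the cofinite graph structure (hence a topological graph), and then cite the general theory for compactness, the Hausdorff property, and total disconnectedness. The paper packages the latter three properties as a single citation of Theorem~\ref{t:Completion is profinite}, whereas you distribute them over the definition of completion and Proposition~\ref{p:Hausdorff directed graph}, but this is only a cosmetic difference.
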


\begin{proof}
By Theorem~\ref{t:Cofinite graph}, $\overline\Gamma$ is a cofinite graph, and thus a topological graph. Moreover, it is compact, Hausdorff, and totally disconnected by Theorem~\ref{t:Completion is profinite}.
\end{proof}

\subsubsection*{Remark}

It is easy to see that a cofinite graph $\Gamma$ is compact if and only if it is isomorphic to the inverse limit of an inverse system of finite discrete topological graphs and maps of graphs. Such topological graphs are called {\it profinite graphs\/}. A somewhat more complicated fact is that profinite graphs are the same thing as compact Hausdorff totally disconnected topological graphs.

%%%%%%%%%%%%%%%%%%%  Cofinite groupoids

\subsection{Cofinite groupoids} 

Above we saw that the completions of cofinite spaces and cofinite graphs are profinite spaces and and profinite graphs, respectively. We will see that the situation for cofinite groupoids is more complicated in general.  

To begin, we review some basic groupoid theory. 
Recall that a groupoid  is a small category in which every arrow is invertible. To be more precise, a {\it groupoid\/} is a directed graph $G$ equipped with a partial binary operation (denoted by juxtaposition) satisfying these conditions: for all $g,h,k\in G$, 
\begin{enumerate} 
\item $gh$ is defined if and only if $t(g)=s(h)$, and in this case $s(gh)=s(g)$ and $t(gh)=t(h)$;
\item if $t(g)=s(h)$ and $t(h)=s(k)$, then $(gh)k=g(hk)$;
\item $s(g)g=g$ and $gt(g)=g$;
\item there exists $g^{-1}\in G$ such that $s(g^{-1})=t(g)$, $t(g^{-1})=s(g)$ and $gg^{-1}=s(g)$, $g^{-1}g=t(g)$.
\end{enumerate}
Customarily the set of composable pairs in a groupoid $G$ is denoted 
$$
G^{(2)}=\{(g,h)\in G\times G\mid t(g)=s(h)\}.
$$
Then the partial binary operation on $G$ is a function $G^{(2)}\to G$, denoted by 
$(g,h)\mapsto gh$.
Likewise, we write $G^{(3)}$ for the set of composable triples in $G$. Then the associativity property (Axiom~2) can be stated as: if $(g,h,k)\in G^{(3)}$, then $(gh)k=g(hk)$.
Also note that there is an identity at each vertex $v$ in a groupoid which we are taking to be $v$ itself; by a standard argument, it is unique. Similarly, the inverse $g^{-1}$ of an element $g$ satisfying Axiom~4 is unique.   

A groupoid $G$ with a single vertex is nothing other than a group, and in this case, $V(G)=\{1\}$ and $G^{(2)}=G\times G$. 

A subset $H$ of a groupoid $G$ is a {\it subgroupoid\/} of $G$ if: (1)~$H$ is a sub-directed graph of $G$; (2)~if $h_1,h_2\in H$ and $(h_1,h_2)\in G^{(2)}$, then $h_1h_2\in H$; 
and (3)~$h^{-1}\in H$ whenever $h\in H$. 
In this case, $H$ is itself a groupoid under the restrictions of the operations on $G$.

Given any groupoid $G$ and vertices $x,y\in V(G)$, we denote by $G(x,y)$ the (possible empty) set of all arrows from $x$ to $y$ in $G$. Note in particular that when $x=y$, the set $G(x,x)$ is a subgroupoid of $G$ with a single vertex, and so is a group. 
The groups $G(x,x)$, for $x\in V(G)$, are called the {\it vertex groups\/} or 
{\it local groups\/} of $G$.  

The {\it product groupoid\/} of groupoids $G_1$ and $G_2$ is the directed graph $G_1\times G_2$ with partial binary operation $(G_1\times G_2)^{(2)}\to G_1\times G_2$ defined as follows. 
If $(g_1,g_2), (h_1,h_2)\in G_1\times G_2$ and $t(g_1,g_2)=s(h_1,h_2)$, then 
$(g_1,g_2)(h_1,h_2)=(g_1h_1,g_2h_2)$. 
The product of an arbitrary family of groupoids is defined similarly. 

If $G$ and $H$ are groupoids, a {\it homomorphism\/} $\phi\from G\to H$ is a function with the property: if $(g_1,g_2)\in G^{(2)}$, then $(\phi(g_1),\phi(g_2))\in H^{(2)}$ and 
$\phi(g_1g_2)=\phi(g_1)\phi(g_2)$. 

An equivalence relation $\rho$ on a groupoid $G$ is called a {\it congruence\/} if it is also a subgroupoid of the product groupoid $G\times G$. That is, a congruence on $G$ is an  equivalence relation $\rho$ on $G$ with the properties: 
(1)~if $(g,h)\in\rho$, then $(s(g),s(h))\in\rho$ and $(t(g),t(h))\in\rho$; 
(2)~if $(g_1,g_2),(h_1,h_2)\in\rho$ and $t(g_1,g_2)=s(h_1,h_2)$, then $(g_1h_1,g_2h_2)\in\rho$; and  
(3)~if $(g,h)\in\rho$, then $(g^{-1},h^{-1})\in\rho$.

Our pattern for defining cofinite objects yields the following definition of a cofinite groupoid.  

\begin{definition} \rm
Let $G$ be a groupoid endowed with a uniformity.  
A {\it cofinite congruence\/} on $G$ is a congruence $\rho$ of finite index on $G$ such that $\rho$ is also an entourage of $G$. 
We say that $G$ is a {\it cofinite groupoid\/} if it is Hausdorff and the set of cofinite congruences on $G$ is a base for its uniformity. 
\end{definition}

Note that a cofinite groupoid is a cofinite directed graph, and thus has uniformly continuous source and  target maps by Lemma~\ref{l:Uniformly continuous source and target}. We observe next that the same goes for the involution and partial product maps. 

\begin{lemma} \label{l:Uniformly continuous involution and partial product}
Let $G$ be a cofinite groupoid. Then the involution map $G\to G$, $g\mapsto g^{-1}$, and the partial product $G^{(2)}\to G$ are uniformly continuous maps. In particular, $G$ is a Hausdorff topological groupoid.  
\end{lemma}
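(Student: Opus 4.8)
The plan is to reduce each assertion to the universal property of the completion (Theorem~\ref{t:Completion}) together with the now-standard ``dense subset'' trick already used in the proofs of Theorem~\ref{t:Completion} and Theorem~\ref{t:Cofinite graph}. First I would handle the involution: a groupoid is in particular a graph (via $g\mapsto g^{-1}$), and a cofinite congruence is in particular a graph equivalence relation, so a cofinite groupoid is a cofinite graph in the sense of Definition~\ref{d:Cofinite graph}. Hence Lemma~\ref{l:Uniformly continuous source and target} (applied to $G$ as a cofinite directed graph) and the argument recorded just before Theorem~\ref{t:Cofinite graph} already give that the involution on $G$ is uniformly continuous; alternatively one quotes directly that $\nu\from G\to G/\rho$ preserves inverses for every cofinite congruence $\rho$, so $(\nu\times\nu)^{-1}$ of the (discrete, hence uniform) graph of the inversion on $G/\rho$ is an entourage of $G\times G$ witnessing uniform continuity.

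The substantive part is uniform continuity of the partial product $m\from G^{(2)}\to G$, where $G^{(2)}=\{(g,h)\in G\times G\mid t(g)=s(h)\}$ carries the relative uniformity from $G\times G$. Fix a cofinite congruence $\rho$ on $G$; I must produce a cofinite congruence $\sigma$ on $G$ such that $(g_1,h_1)$ and $(g_2,h_2)$ composable with $(g_1,g_2)\in\sigma$ and $(h_1,h_2)\in\sigma$ forces $(g_1h_1,g_2h_2)\in\rho$. But this is exactly axiom~(2) in the definition of a congruence: take $\sigma=\rho$ itself. Thus if $W=(\rho\times\rho)\cap\big(G^{(2)}\times G^{(2)}\big)$ — an entourage of $G^{(2)}$ since $\rho$ is an entourage of $G$ — then $((g_1,h_1),(g_2,h_2))\in W$ implies $(m(g_1,h_1),m(g_2,h_2))=(g_1h_1,g_2h_2)\in\rho$, i.e. $(m\times m)[W]\subseteq\rho$. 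Since the cofinite congruences $\rho$ form a base for the uniformity on $G$, and their restrictions to $G^{(2)}\times G^{(2)}$ together with the induced uniformity on $G^{(2)}$ give a base there, this shows $m$ is uniformly continuous. I do not expect a genuine obstacle here: the congruence axioms were tailored precisely so that multiplication becomes uniformly continuous, much as compatibility of equivalence relations was tailored for $s,t$ in Lemma~\ref{l:Uniformly continuous source and target}; the only point requiring a line of care is that the relative uniformity on $G^{(2)}$ really is generated by the $W$'s, which is immediate from the definition of relative (subspace) uniformity on a subset of $G\times G$.

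Finally, for the ``in particular'' clause: uniformly continuous maps between uniform spaces are continuous, so $s$, $t$ (Lemma~\ref{l:Uniformly continuous source and target}), the involution, and $m$ are all continuous; since $G$ is Hausdorff by hypothesis, $G$ is a Hausdorff topological groupoid by definition. I would close by remarking that $G^{(2)}$ is a closed subset of $G\times G$ — it is the equalizer $\{(g,h)\mid t(g)=s(h)\}$ of the continuous maps $(g,h)\mapsto t(g)$ and $(g,h)\mapsto s(h)$ into the Hausdorff space $V(G)$ — which makes the topological-groupoid structure well behaved, though this last observation is not strictly needed for the statement as written.
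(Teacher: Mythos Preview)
Your argument for the partial product is correct and essentially identical to the paper's: take $\sigma=\rho$, invoke congruence axiom~(2), and note that $(\rho\times\rho)\cap\bigl(G^{(2)}\times G^{(2)}\bigr)$ is an entourage of $G^{(2)}$. The paper says exactly this in one sentence. Your ``in particular'' clause is also fine.

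There is a genuine error in your treatment of the involution, though it does not ultimately sink the proof. You assert that ``a groupoid is in particular a graph (via $g\mapsto g^{-1}$)'' and hence a cofinite groupoid is a cofinite graph. This is false in the paper's sense: Definition of a graph requires the involution to be without fixed edges, i.e.\ $x^{-1}=x\Leftrightarrow x\in V(\Gamma)$. In a groupoid, any element $g$ of order~$2$ in a vertex group $G(x,x)$ satisfies $g^{-1}=g$ without being a vertex, so the involution may well have fixed edges. Hence you cannot appeal to the cofinite-graph machinery preceding Theorem~\ref{t:Cofinite graph}. Your alternative argument via $\nu\from G\to G/\rho$ is salvageable but unnecessarily roundabout (and at this point in the paper $G/\rho$ has not been shown to be a groupoid; that requires Condition~\ref{condition3}). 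The paper's argument is the one-liner you should have led with: congruence axiom~(3) says $(g,h)\in\rho$ implies $(g^{-1},h^{-1})\in\rho$, which is precisely the statement that the involution carries the basic entourage $\rho$ into $\rho$, hence is uniformly continuous.

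Finally, your opening paragraph announces a plan based on Theorem~\ref{t:Completion} and a density trick, but you never use either---nor should you, since the direct argument from the congruence axioms is both simpler and what the paper does. Drop that paragraph.
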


\begin{proof}
Since the cofinite congruences $\rho$ on $G$ form a base for its uniformity, and 
$(g^{-1},h^{-1})\in\rho$ whenever $(g,h)\in\rho$, it follows that the involution map $g\mapsto g^{-1}$ is uniformly continuous. 
Also given any cofinite congruence $\rho$ on $G$, $(\rho\times\rho)\cap G^{(2)}$ is an entourage of $G^{(2)}$ such that: for all $(g_1,h_1), (g_2,h_2)\in G^{(2)}$,  
if $(g_1,g_2)\in\rho$ and $(h_1,h_2)\in\rho$, then $(g_1h_1,g_2h_2)\in\rho$. Thus the partial product $G^{(2)}\to G$ is a uniformly continuous map. 
\end{proof}

It follows immediately from the previous lemma that the vertex groups of a cofinite groupoid are topological groups in their relative topologies. In fact, we can say even more. 

\begin{proposition} \label{p:Vertex groups are cofinite}
If $G$ is a cofinite groupoid and $x\in V(G)$, then the vertex group $G(x,x)$ is a cofinite group. 
\end{proposition}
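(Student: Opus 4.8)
The plan is to show directly that the relative uniformity on $G(x,x)$, inherited from the cofinite groupoid $G$, has a base consisting of cofinite entourages of $G(x,x)$ in the group sense, i.e.\ open normal subgroups of finite index. Since $G(x,x)$ is a subgroupoid of $G$, and a subgroupoid with a single vertex is a group, the relevant structure is already in place; the subtlety is that a cofinite congruence $\rho$ on $G$ restricts to a congruence on the group $G(x,x)$, and a congruence on a group corresponds to a (two-sided, hence normal) subgroup, but one must check that the induced partition is honestly a coset partition of $G(x,x)$ by an open normal subgroup of finite index.

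First I would record that, by Lemma~\ref{l:Uniformly continuous involution and partial product}, $G(x,x)$ is a topological group in its relative topology, and by Lemma~\ref{l:Properties of cofinite entourages}(6) together with the fact that $G(x,x)$ is a sub-directed graph, the sets $\rho\cap\big(G(x,x)\times G(x,x)\big)$, as $\rho$ ranges over the cofinite congruences of $G$ forming a base for the uniformity of $G$, form a base for the relative uniformity of $G(x,x)$. So it suffices to fix one cofinite congruence $\rho$ on $G$ and analyze $\rho_x := \rho\cap\big(G(x,x)\times G(x,x)\big)$. This is an equivalence relation of finite index on $G(x,x)$, and it is a congruence of the group $G(x,x)$: indeed $\rho$ is a subgroupoid of $G\times G$, and every pair in $G(x,x)\times G(x,x)$ sits over the vertex $(x,x)$, so every composable pair within $\rho_x$ is composable and its product lies in $\rho_x$; similarly $\rho_x$ is closed under the involution, which on $G(x,x)$ is group inversion. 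A congruence on a group is the coset equivalence relation of a normal subgroup, so $N := \rho_x[x] = \{\,g\in G(x,x)\mid (x,g)\in\rho\,\}$ is a normal subgroup of $G(x,x)$, and $\rho_x$ is precisely the partition of $G(x,x)$ into cosets of $N$. Since $\rho_x$ has finite index, $[G(x,x):N]$ is finite.

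The remaining point is openness: $N = \rho_x[x]$ must be an open subset of $G(x,x)$. This follows because $\rho_x$ is an entourage of the uniform space $G(x,x)$ (it is the restriction of the entourage $\rho$), so $\rho_x[x]$ is a neighborhood of $x$ in $G(x,x)$; being a subgroup that is a neighborhood of the identity, $N$ is open (standard for topological groups). Thus each $\rho_x$ is the coset equivalence relation of an open normal subgroup of finite index, and these $\rho_x$ form a base for the uniformity of $G(x,x)$; equivalently, the open normal subgroups of finite index form a neighborhood base at the identity. Since $G$ is Hausdorff and $G(x,x)$ is a subspace, $G(x,x)$ is Hausdorff, so by Hartley's definition $G(x,x)$ is a cofinite group. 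I do not expect a genuine obstacle here; the only care needed is the bookkeeping that a cofinite congruence restricts to a group congruence on the vertex group (using that all pairs there lie over the single composable vertex pair $(x,x)$), and translating "congruence of finite index that is an entourage" into "open normal subgroup of finite index" — which is exactly the group-theoretic special case of the cofinite-directed-graph machinery already developed.
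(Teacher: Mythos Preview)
Your proof is correct and follows essentially the same route as the paper: restrict a cofinite congruence $\rho$ to $G(x,x)$, observe that this restriction is a group congruence of finite index whose identity class $\rho[x]\cap G(x,x)$ is therefore a normal subgroup of finite index, and note that these classes form a neighborhood base at the identity, so $G(x,x)$ is a cofinite group. Your write-up is somewhat more detailed (e.g.\ spelling out why $N$ is open via the standard ``subgroup containing a neighborhood of the identity is open'' argument), but the underlying argument is the same.
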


\begin{proof}
The set $\{\rho[x]\cap G(x,x)\mid\hbox{$\rho$ is a cofinite congruence on $G$}\}$ is a base of open neighborhoods for the identity $x$ in $G(x,x)$. 
However, if $\rho$ is a cofinite congruence on $G$,  then $\rho\cap[G(x,x)\times G(x,x)]$ is a congruence of finite index on the group $G(x,x)$ and $\rho[x]\cap G(x,x)$ is the congruence class of the identity. It follows that each $\rho[x]\cap G(x,x)$ is a normal subgroup of finite index in $G(x,x)$. Hence the identity in $G(x,x)$ has a neighborhood base consisting of open normal subgroups of finite index in $G(x,x)$, and so it is a cofinite group.
\end{proof}

In particular, we see that cofinite groups are precisely the cofinite groupoids with a single vertex. 

\subsubsection*{Questions}
 
Since a cofinite groupoid $G$ is a cofinite directed graph, its completion $\overline G$ is a profinite directed graph by Theorem~\ref{t:Completion is profinite}. Below we give partial answers to these three fundamental questions. 
\begin{enumerate}
\item Under what circumstances is it possible to make the completion $\overline G$ into a topological groupoid such that $G$ is a subgroupoid?
\item When $\overline G$ is a topological groupoid such that $G$ is a subgroupoid and $\rho$ is a cofinite congruence on $G$, is $\overline\rho$ a cofinite congruence on $\overline G$? If so, then $\overline G$ would be a compact cofinite groupoid. 
\item Is every compact cofinite groupoid a profinite groupoid, i.e., the inverse limit of an inverse system of finite discrete topological groupoids? The converse is easily seen to be true. 
\end{enumerate}

To begin, note that by Theorem~\ref{t:Completion}, there is a unique extension of the involution on $G$ to a continuous map $\overline G\to\overline G$ 
which we also denote by $g\mapsto g^{-1}$. 
This map on $\overline G$ inherits the properties: $s(g)g=g$, $gt(g)=g$, $s(g^{-1})=t(g)$, $t(g^{-1})=s(g)$, $gg^{-1}=s(g)$, $g^{-1}g=t(g)$ for all $g\in \overline G$. 
Indeed, the set $A=\{g\in\overline G\mid s(g)g=g\}$ is a closed subset of 
$\overline G$ since it is the set where two continuous functions $\overline G\to\overline G$ agree and $\overline G$ is Hausdorff. 
However $G\subseteq A$ and $G$ is dense in $\overline G$, and so $A=\overline G$ and $s(g)g=g$ for all $g\in\overline G$. Similarly the other properties hold. 

On the other hand, extending the partial product on $G$ to a continuous map 
$\overline{G}^{(2)}\to\overline G$ is not such an easy matter and it may be impossible in general. To get around this, we impose an additional condition on the cofinite groupoid $G$.

\begin{condition} \label{condition1}
$G^{(2)}$ is dense in $\overline G^{(2)}$.
\end{condition}

We endow $G^{(2)}$ with the relative uniformity from $G\times G$ and view it as a cofinite directed graph with source and target maps given by 
$$
\hbox{$s(g,h)=(s(g),s(g))$ and $t(g,h)=(t(h),t(h))$.}
$$ 
Note that $V(G^{(2)})=\{(v,v)\in G^{(2)}\mid v\in V(G)\}$ and that the partial binary operation $G^{(2)}\to G$ is a map of directed graphs. 
Similarly we view $\overline G^{(2)}$ as a cofinite directed graph. 
Then $G^{(2)}$ is a sub-directed graph of $\overline G^{(2)}$, and $\overline G^{(2)}$ is compact as it is a closed subset of $\overline G\times\overline G$. 
Therefore, assuming that $G$ satisfies Condition~\ref{condition1}, $\overline G^{(2)}$ is the completion of $G^{(2)}$, and so by Theorem~\ref{t:Completion}, the partial product $G^{(2)}\to G$ has a unique extension to a continuous map of directed graphs 
$\overline G^{(2)}\to\overline G$. 
It is clear that this partial product on $\overline G$ satisfies the groupoid axioms~(3) and~(4) since they hold on the dense subset $G$ of $\overline G$. Then axiom~(1) also holds since 
$\overline G^{(2)}\to\overline G$ is a map of directed graphs.
However, a further condition seems to be needed to ensure that the associativity axiom~(2) holds. For this purpose, we impose the  following stronger condition on $G$.

\begin{condition} \label{condition2}
$G^{(3)}$ is dense in $\overline G^{(3)}$. 
\end{condition}

We first show that Condition~\ref{condition2} implies Condition~\ref{condition1}. 
Suppose $G$ is a cofinite groupoid satisfying Condition~\ref{condition2} and let 
$(g,h)\in\overline G^{(2)}$. Then $(g,h,t(h))\in\overline G^{(3)}$. So for all cofinite congruences $\rho$ on $G$, there exists $(x,y,z)\in G^{(3)}$ such that 
$(g,x),(h,y),(t(h),z)\in\rho$ (by Condition~\ref{condition2}). 
In particular, $(x,y)\in G^{(2)}$, and it follows that Condition~\ref{condition1} holds.

Next note that the set $B=\{(g,h,k)\in\overline G^{(3)}\mid (gh)k=g(hk)\}$ is a closed subset of $\overline G^{(3)}$ since it is the set where two continuous functions from 
$\overline G^{(3)}\to\overline G$ agree and $\overline G$ is Hausdorff. 
But $G^{(3)}\subseteq B$ because $G$ is a groupoid. So if Condition~\ref{condition2} holds, then $B=\overline G^{(3)}$ and the associativity axiom~(2) holds for $\overline G$. 

We next show that our first two questions have affirmative answers for cofinite groupoids satisfying Condition~\ref{condition2}. 

\begin{theorem} \label{t:Completion of a cofinite groupoid}
Let $G$ be a cofinite groupoid satisfying Condition~\ref{condition2}. Then its completion $\overline G$ is a compact cofinite groupoid containing $G$ as a dense uniform subgroupoid. Furthermore, the cofinite congruences of $\overline G$ consist precisely of the closures $\overline\rho$ of all cofinite congruences $\rho$ on $G$.
\end{theorem}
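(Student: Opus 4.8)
The discussion preceding the statement already does most of the work: since Condition~\ref{condition2} forces Condition~\ref{condition1}, the set $G^{(2)}$ is dense in $\overline G^{(2)}$, so by Theorem~\ref{t:Completion} the partial product $G^{(2)}\to G$ extends uniquely to a continuous map of directed graphs $\overline G^{(2)}\to\overline G$; axioms~(1),(3),(4) hold on $\overline G$ because $G$ is dense, and associativity holds by Condition~\ref{condition2}. As this extended product (and the extended involution) restricts to the original operations on $G$, the directed graph $G$ is a dense subgroupoid of the compact Hausdorff topological groupoid $\overline G$. Thus the only things left to prove are that the cofinite congruences on $\overline G$ form a base for its uniformity and that they are exactly the closures $\overline\rho$ of cofinite congruences $\rho$ on $G$.

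The heart of the matter is: \emph{if $\rho$ is a cofinite congruence on $G$, then $\overline\rho$ is a cofinite congruence on $\overline G$.} A cofinite congruence is in particular a cofinite entourage, so by Lemma~\ref{l:Closure of R} the closure $\overline\rho$ is a cofinite entourage of $\overline G$ with $\overline\rho\cap(G\times G)=\rho$, and (from the proof of that lemma) $\overline\rho=\ker\overline\nu$, where $\overline\nu\colon\overline G\to G/\rho$ is the continuous extension of the quotient map $\nu\colon G\to G/\rho$; here $G/\rho$ is a finite discrete groupoid and $\nu$ is a groupoid homomorphism because $\rho$ is a congruence. I would then show that $\overline\nu$ is a groupoid homomorphism. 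Both maps $\overline G^{(2)}\to G/\rho$ given by $(g,h)\mapsto\overline\nu(gh)$ and $(g,h)\mapsto\overline\nu(g)\overline\nu(h)$ are continuous --- the first as a composite of the continuous partial product $\overline G^{(2)}\to\overline G$ with $\overline\nu$, the second because $\overline\nu$ preserves sources and targets (so $(g,h)\mapsto(\overline\nu(g),\overline\nu(h))$ carries $\overline G^{(2)}$ into $(G/\rho)^{(2)}$) followed by the product of the discrete groupoid $G/\rho$ --- and they agree on $G^{(2)}$. Since $G^{(2)}$ is dense in $\overline G^{(2)}$ and $G/\rho$ is Hausdorff, they agree everywhere, so $\overline\nu$ is multiplicative. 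A routine check (using that $\overline\nu$ is a multiplicative map of directed graphs) now shows $\ker\overline\nu=\overline\rho$ satisfies all three congruence axioms, so $\overline\rho$ is a cofinite congruence on $\overline G$. I expect this step --- equivalently, closure of $\overline\rho$ under the partial product --- to be the only real obstacle, and it is exactly where Condition~\ref{condition2} is indispensable.

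The rest is bookkeeping. For the basis property, given an entourage $U$ of $\overline G$ I would choose a closed entourage $W\subseteq U$; then $W\cap(G\times G)$ is an entourage of $G$, so there is a cofinite congruence $\rho$ on $G$ with $\rho\subseteq W\cap(G\times G)$, and then $\overline\rho\subseteq W\subseteq U$ (as $W$ is closed) with $\overline\rho$ a cofinite congruence on $\overline G$ by the previous paragraph; hence $\overline G$ is a compact cofinite groupoid. For the description of its cofinite congruences, one inclusion is the previous paragraph. Conversely, a cofinite congruence $\tau$ on $\overline G$ is in particular a cofinite entourage of $\overline G$, so by Theorem~\ref{t:Completion is profinite} we have $\tau=\overline\rho$ for the cofinite entourage $\rho:=\tau\cap(G\times G)$ of $G$; and $\rho$ inherits all three congruence axioms from $\tau$ because $G$ is a subgroupoid (hence closed under inverse and under the restricted partial product), so $\rho$ is a cofinite congruence on $G$. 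This completes the proof.
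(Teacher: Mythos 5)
Your proposal is correct, and its overall skeleton (closures of congruences are congruences; every cofinite congruence of $\overline G$ restricts and recovers; the basis argument) matches the paper's. But you handle the one nontrivial step --- showing $\overline\rho$ is closed under the partial product --- by a genuinely different mechanism. The paper works directly with the relation: it uses the openness of $\overline\rho$ in $\overline G\times\overline G$ (Lemma~\ref{l:Properties of cofinite entourages}(2)) to show, via basic open boxes and Condition~\ref{condition1}, that $\rho^{(2)}$ is dense in $\overline\rho^{(2)}$, and then applies continuity of the multiplication $m$ of the product groupoid $\overline G\times\overline G$ to get $m(\overline\rho^{(2)})=m(\overline{\rho^{(2)}})\subseteq\overline\rho$. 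You instead pass to the quotient: you prove that the continuous extension $\overline\nu\colon\overline G\to G/\rho$ of the quotient homomorphism is multiplicative (two continuous maps on $\overline G^{(2)}$ agreeing on the dense subset $G^{(2)}$, with $G/\rho$ discrete), and read off that its kernel $\overline\rho$ is a congruence. Your route is essentially the special case $H=G/\rho$ of Corollary~\ref{c:Continuous extension is a homomorphism}, which the paper states only \emph{after} the theorem and proves by exactly your density argument; so you have in effect reordered the logic, proving the corollary first and deducing the theorem from it. What you gain is a cleaner argument that avoids the fiddly open-box computation and the density of $\rho^{(2)}$ in $\overline\rho^{(2)}$; what the paper's version gains is a statement ($\rho^{(2)}$ dense in $\overline\rho^{(2)}$) about the relation itself rather than about a quotient map. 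One small imprecision: the step you single out uses only Condition~\ref{condition1} (density of $G^{(2)}$ in $\overline G^{(2)}$); Condition~\ref{condition2} is what is really indispensable earlier, for the associativity of the extended product on $\overline G$. This does not affect correctness, since the theorem assumes Condition~\ref{condition2} and that implies Condition~\ref{condition1}.
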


\begin{proof}
We have seen that $\overline G$ is a compact Hausdorff topological groupoid containing $G$ as a dense uniform subgroupoid. Thus, it suffices to prove the last part since it implies that $\overline G$ is cofinite.

Let $\rho$ be a cofinite congruence on $G$. By Lemma~\ref{l:Closure of R}, $\overline\rho$ is a cofinite entourage of the underlying directed graph of $\overline G$. 
So by Lemma~\ref{l:Properties of cofinite entourages}(2), $\overline\rho$ is an open subset of $\overline G\times\overline G$. Moreover, being compatible with the directed graph structure of $\overline G$, $\overline\rho$ is also a sub-directed graph of $\overline G\times\overline G$.

We claim that $\rho^{(2)}$ is dense in $\overline\rho^{(2)}$. 
Let $x=(x_1,x_2), y=(y_1,y_2)\in\overline\rho$ such that $(x,y)\in\overline\rho^{(2)}$, 
and let $U$, $V$ be open subsets of $\overline G$ such that 
$x\in U$, $y\in V$.
Since $\overline\rho$ is open in $\overline G\times\overline G$, there exists open subsets 
$A_1,A_2,B_1,B_2$ of $\overline G$ such that  
$$
\hbox{
$x\in A_1\times A_2\subseteq U\cap\overline\rho$ 
and $y\in B_1\times B_2\subseteq V\cap\overline\rho$.
}
$$
Note that  $(x_i,y_i)\in(A_i\times B_i)\cap\overline G^{(2)}$ since $t(x_1,x_2)=s(y_1,y_2)$, and so $(A_i\times B_i)\cap\overline G^{(2)}$ is a nonempty open subset of $\overline G^{(2)}$. 
However $G^{(2)}$ is dense in $\overline G^{(2)}$ (since Condition~\ref{condition2} holds), and so there exists some element 
$(g_i,h_i)\in (A_i\times B_i)\cap G^{(2)}$, $i=1,2$.
Now $g=(g_1,g_2)\in A_1\times A_2\subseteq\overline\rho$ so that $g\in\overline\rho\cap(G\times G)=\rho$ (by Lemma~\ref{l:Closure of R}). 
Likewise $h=(h_1,h_2)\in\rho$. Furthermore $t(g)=s(h)$, and so 
$(g,h)\in(U\times V)\cap\rho^{(2)}$. 
It follows that $(x,y)$ is in the closure of $\rho^{(2)}$, and hence $\rho^{(2)}$ is dense in $\overline\rho^{(2)}$, as claimed. 

Denote the partial binary operation on the product groupoid 
$\overline G\times\overline G$ by 
$m\from(\overline G\times\overline G)^{(2)}\to\overline G$.
Being a congruence, $\rho$ is a subgroupoid of $G\times G$ and so $m(\rho^{(2)})\subseteq\rho$. Thus 
$m(\overline\rho^{(2)})=m(\overline{\rho^{(2)}})\subseteq\overline\rho$
since $m$ is continuous and $\rho^{(2)}$ is dense in $\overline\rho^{(2)}$ (by the above claim). It follows that $\overline\rho$ is a subgroupoid of $\overline G\times\overline G$, and hence a congruence on $\overline G$. By Lemma~\ref{l:Closure of R}, $\overline\rho$ is also a cofinite entourage of $\overline G$ and so it is a cofinite congruence on $\overline G$. 

Conversely, let $\sigma$ be any cofinite congruence on $\overline G$. 
Then the restriction $\rho=\sigma\cap(G\times G)$ is a cofinite congruence on $G$, and as in the proof of Lemma~\ref{l:Closure of R}, $\overline\rho=\sigma$.
\end{proof} 

It is also worth noting that under these circumstances, Theorem~\ref{t:Completion} can be strengthened as follows. 

\begin{corollary}  \label{c:Continuous extension is a homomorphism}
If $G$ is a cofinite groupoid satisfying Condition~\ref{condition2}, $H$ is a compact Hausdorff topological groupoid, and $\phi\from G\to H$ is a uniformly continuous homomorphism, then the unique continuous map $\overline\phi\from\overline G\to H$ extending $\phi$ is a homomorphism of groupoids.  
\end{corollary}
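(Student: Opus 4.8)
The plan is to mimic the proof of Theorem~\ref{t:Completion}, upgrading the already-established continuous extension $\overline\phi$ from a map of directed graphs to a homomorphism of groupoids. By Theorem~\ref{t:Completion} we already have a unique continuous map of directed graphs $\overline\phi\from\overline G\to H$ with $\overline\phi|_G=\phi$; what remains is to verify that $\overline\phi$ respects the partial product, i.e., that $(\overline\phi(g),\overline\phi(h))\in H^{(2)}$ and $\overline\phi(gh)=\overline\phi(g)\overline\phi(h)$ for every $(g,h)\in\overline G^{(2)}$.

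First I would record that since $G$ satisfies Condition~\ref{condition2}, it satisfies Condition~\ref{condition1}, so $G^{(2)}$ is dense in $\overline G^{(2)}$; moreover, by the discussion preceding Theorem~\ref{t:Completion of a cofinite groupoid}, $\overline G$ carries a (unique continuous) partial product extending that of $G$, making $\overline G$ a topological groupoid. The first half of the compatibility condition is automatic: the map $\overline G^{(2)}\to H\times H$ given by $(g,h)\mapsto(\overline\phi(g),\overline\phi(h))$ is continuous, it carries the dense subset $G^{(2)}$ into $H^{(2)}$ (because $\phi$ is a homomorphism), and $H^{(2)}$ is closed in $H\times H$ (it is the preimage of the diagonal under the continuous map $(a,b)\mapsto(t(a),s(b))$ into the Hausdorff space $V(H)\times V(H)$); hence $(\overline\phi(g),\overline\phi(h))\in H^{(2)}$ for all $(g,h)\in\overline G^{(2)}$. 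For the multiplicativity, consider the set
$$
C=\{(g,h)\in\overline G^{(2)}\mid \overline\phi(gh)=\overline\phi(g)\overline\phi(h)\}.
$$
The two functions $(g,h)\mapsto\overline\phi(gh)$ and $(g,h)\mapsto\overline\phi(g)\overline\phi(h)$ from $\overline G^{(2)}$ to $H$ are continuous — the first because it is $\overline\phi$ composed with the continuous partial product $\overline G^{(2)}\to\overline G$, the second because it is the composite of $(g,h)\mapsto(\overline\phi(g),\overline\phi(h))$ (continuous into $H^{(2)}$ by the previous step) with the continuous partial product $m_H\from H^{(2)}\to H$. Since $H$ is Hausdorff, $C$ is closed in $\overline G^{(2)}$. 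But $G^{(2)}\subseteq C$ because $\phi$ is a homomorphism, and $G^{(2)}$ is dense in $\overline G^{(2)}$, so $C=\overline G^{(2)}$ and $\overline\phi$ is multiplicative.

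The main obstacle is not really present here, since the hard work — producing a continuous partial product on $\overline G$ and identifying $\overline G^{(2)}$ with the completion of $G^{(2)}$ — has already been carried out before Theorem~\ref{t:Completion of a cofinite groupoid}; the only point requiring a little care is making sure the codomain restriction $\overline G^{(2)}\to H^{(2)}$ is genuinely continuous so that composing with $m_H$ is legitimate, which is exactly why the closedness of $H^{(2)}$ in $H\times H$ is recorded above. Everything else is the standard "two continuous maps into a Hausdorff space agreeing on a dense set" argument already used in Theorem~\ref{t:Completion} and in the paragraphs establishing the groupoid axioms for $\overline G$.
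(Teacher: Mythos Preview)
Your proof is correct and follows essentially the same route as the paper's: define the coincidence set $C\subseteq\overline G^{(2)}$, observe it is closed (two continuous maps into a Hausdorff space), note $G^{(2)}\subseteq C$, and invoke Condition~\ref{condition1} to conclude $C=\overline G^{(2)}$. The only minor difference is that you establish $(\overline\phi(g),\overline\phi(h))\in H^{(2)}$ via a density-plus-closed-$H^{(2)}$ argument, whereas it falls out more directly from the fact (already given by Theorem~\ref{t:Completion}) that $\overline\phi$ is a map of directed graphs: if $t(g)=s(h)$ then $t(\overline\phi(g))=\overline\phi(t(g))=\overline\phi(s(h))=s(\overline\phi(h))$.
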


\begin{proof}
The set 
$C=\{(g,h)\in\overline G^{(2)}\mid\overline\phi(gh)=\overline\phi(g)\overline\phi(h)\}$ 
is a closed subset of $\overline G^{(2)}$ containing $G^{(2)}$. 
However, Condition~\ref{condition1} holds for $G$, and so $C=\overline G^{(2)}$ and $\overline\phi$ is a homomorphism. 
\end{proof}

Turning to our third question, we see no reason to believe that every compact cofinite groupoid is a profinite groupoid. All we do here is give a sufficient condition for this to hold. We make use of the following notation. Given a map of directed graphs $f\from\Gamma\to\Delta$, we use the same symbol to denote the induced map $f\from\Gamma^{(3)}\to\Delta^{(3)}$. It is defined by $f(x_1,x_2,x_3)=(f(x_1),f(x_2),f(x_3))$. Now consider the following condition on a cofinite groupoid $G$.

\begin{condition} \label{condition3}
The uniformity on $G$ has a base consisting of cofinite congruences $\rho$ whose natural map $\nu\from G\to G/\rho$ has the property that $\nu[G^{(3)}]=(G/\rho)^{(3)}$.
\end{condition}

First we observe that Condition~\ref{condition3} implies Condition~\ref{condition2}.

\begin{lemma} \label{l:condition3 implies condition2}
If $G$ is a cofinite groupoid satisfying Condition~\ref{condition3}, then $G$ also satisfies Condition~\ref{condition2}.
\end{lemma}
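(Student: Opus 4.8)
The plan is to check density of $G^{(3)}$ in $\overline G^{(3)}$ pointwise against a convenient base of entourages. Fix $(g,h,k)\in\overline G^{(3)}$ and let $\rho$ be a cofinite congruence on $G$ chosen from the base furnished by Condition~\ref{condition3}, so that the natural map $\nu\from G\to G/\rho$ satisfies $\nu[G^{(3)}]=(G/\rho)^{(3)}$. Since $G/\rho$ is finite and hence a compact Hausdorff topological directed graph, Theorem~\ref{t:Completion} gives a unique continuous map of directed graphs $\overline\nu\from\overline G\to G/\rho$ extending $\nu$, and by the proof of Lemma~\ref{l:Closure of R} we have $\overline\rho=\ker\overline\nu$.

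Next I use Condition~\ref{condition3}. Because $\overline\nu$ preserves sources and targets and $(g,h,k)\in\overline G^{(3)}$, the triple $\big(\overline\nu(g),\overline\nu(h),\overline\nu(k)\big)$ lies in $(G/\rho)^{(3)}$. By the defining property of $\rho$ there is a triple $(x,y,z)\in G^{(3)}$ with $\nu(x)=\overline\nu(g)$, $\nu(y)=\overline\nu(h)$ and $\nu(z)=\overline\nu(k)$. As $\overline\nu$ restricts to $\nu$ on $G$ and $\overline\rho=\ker\overline\nu$, this is precisely the statement that $(g,x),(h,y),(k,z)\in\overline\rho$.

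It remains to tie this to density. As in the proof of Theorem~\ref{t:Completion is profinite}, the closures $\overline\rho$ of the cofinite congruences $\rho$ from the Condition~\ref{condition3} base form a base for the uniformity of $\overline G$ (given an entourage $U$ of $\overline G$, shrink to a closed entourage $W\subseteq U$, pick such a $\rho$ inside $W\cap(G\times G)$, and observe $\overline\rho\subseteq\overline W=W\subseteq U$ by Lemma~\ref{l:Closure of R}); hence the coordinatewise products $\overline\rho\times\overline\rho\times\overline\rho$, restricted to $\overline G^{(3)}\times\overline G^{(3)}$, form a base for the subspace uniformity of $\overline G^{(3)}$. The triple $(x,y,z)$ produced above lies in $G^{(3)}\subseteq\overline G^{(3)}$ and in the basic neighborhood of $(g,h,k)$ attached to $\rho$, so $G^{(3)}$ is dense in $\overline G^{(3)}$; that is, Condition~\ref{condition2} holds. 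I do not anticipate a real obstacle: the argument turns entirely on the identity $\overline\rho=\ker\overline\nu$ from Lemma~\ref{l:Closure of R}, which converts equality of $\overline\nu$-values into $\overline\rho$-relatedness, combined with Condition~\ref{condition3}; the only mildly delicate point is the uniform-space bookkeeping in this last step, and half of it is already carried out inside the proof of Theorem~\ref{t:Completion is profinite}.
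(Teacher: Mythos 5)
Your proof is correct and follows essentially the same route as the paper's: both use the extension $\overline\nu$ with $\ker\overline\nu=\overline\rho$ to pull each triple of $\overline G^{(3)}$ back to a $\overline\rho$-close triple in $G^{(3)}$, and then conclude density from the fact that the products $\overline\rho\times\overline\rho\times\overline\rho$ (for $\rho$ in the Condition~\ref{condition3} base) form a base for the product uniformity. The only difference is presentational: you argue pointwise where the paper writes the same containment as $\overline G^{(3)}\subseteq\bigcap_{\rho}\overline\rho[G^{(3)}]=\overline{G^{(3)}}$.
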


\begin{proof}
Let $I$ be a base for the uniformity on $G$ of the type guaranteed by Condition~\ref{condition3}. Given $\rho\in I$, let $\overline\nu\from\overline G\to G/\rho$ be the unique extension of the natural map $\nu\from G\to G/\rho$. 
Then $\overline\nu[\overline G^{(3)}]=(\overline G/\overline\rho)^{(3)}$ also holds.
Since $\overline\rho=\ker\overline\nu$ (by the proof of Lemma~\ref{l:Closure of R}), it follows that $\overline G^{(3)}\subseteq\overline\rho[G^{(3)}]
=\bigcup_{(g,h,k)\in G^{(3)}}(\overline\rho[g]\times\overline\rho[h]\times\overline\rho[k])$.
Using Theorem~\ref{t:Completion is profinite},  we see that the set 
$\{\overline\rho \times\overline\rho\times\overline\rho\mid\rho\in I\}$ is a base for the product uniformity on $\overline G\times\overline G\times\overline G$. 
Thus 
$\overline G^{(3)}\subseteq\bigcap_{\rho\in I}\overline\rho[G^{(3)}]=\overline{G^{(3)}}$. 
It  follows that $G^{(3)}$ is dense in $\overline G^{(3)}$; i.e., Condition~\ref{condition2} holds. 
\end{proof}

\begin{theorem} \label{t:Profinite groupoid}
If $G$ is a cofinite groupoid satisfying Condition~\ref{condition3}, then its completion 
$\overline G$ is a profinite groupoid. 
\end{theorem}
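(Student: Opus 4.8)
The plan is to deduce the theorem from Theorem~\ref{t:Completion of a cofinite groupoid} by realizing $\overline G$ as an inverse limit of finite discrete groupoids. First, by Lemma~\ref{l:condition3 implies condition2} the groupoid $G$ satisfies Condition~\ref{condition2}, so Theorem~\ref{t:Completion of a cofinite groupoid} applies and $\overline G$ is a compact cofinite groupoid containing $G$ as a dense uniform subgroupoid. Next I would fix, using Condition~\ref{condition3}, a base $I$ for the uniformity on $G$ consisting of cofinite congruences $\rho$ whose natural map $\nu_\rho\from G\to G/\rho$ satisfies $\nu_\rho[G^{(3)}]=(G/\rho)^{(3)}$, and record that $\{\overline\rho\mid\rho\in I\}$ is a cofinal set of cofinite entourages of the underlying directed graph of $\overline G$: each $\overline\rho$ is a cofinite entourage of $\overline G$ by Lemma~\ref{l:Closure of R}, and given any cofinite entourage $S$ of $\overline G$, the restriction $S\cap(G\times G)$ is an entourage of $G$ and so contains some $\rho\in I$, whence $\overline\rho\subseteq S$ since $S$ is closed in $\overline G\times\overline G$.

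The heart of the proof is to show that for each $\rho\in I$ the finite discrete directed graph $\overline G/\overline\rho$ is genuinely a groupoid and the natural map $\pi_\rho\from\overline G\to\overline G/\overline\rho$ is a homomorphism. Here I would argue, just as in the proof of Lemma~\ref{l:condition3 implies condition2}, that $\pi_\rho[\overline G^{(3)}]=(\overline G/\overline\rho)^{(3)}$: the inclusion ``$\subseteq$'' holds because $\pi_\rho$ preserves sources and targets, and ``$\supseteq$'' because already $\pi_\rho[G^{(3)}]=\nu_\rho[G^{(3)}]=(G/\rho)^{(3)}=(\overline G/\overline\rho)^{(3)}$ under the identification $G/\rho\cong\overline G/\overline\rho$ of Corollary~\ref{c:Discrete quotients}. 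From this I would extract $\pi_\rho[\overline G^{(2)}]=(\overline G/\overline\rho)^{(2)}$ (given a composable pair of $\overline G/\overline\rho$, append the appropriate identity vertex to obtain a composable triple and lift it back to $\overline G$), so that $\pi_\rho(g)\pi_\rho(h):=\pi_\rho(gh)$ for $(g,h)\in\overline G^{(2)}$ is a well-defined partial product --- well defined because $\overline\rho$ is a congruence --- and is defined on \emph{all} composable pairs of $\overline G/\overline\rho$. Groupoid axioms (1), (3), (4) then descend from $\overline G$ because $\pi_\rho$ is surjective and commutes with sources, targets and inverses, and associativity (2) follows by lifting composable triples via $\pi_\rho[\overline G^{(3)}]=(\overline G/\overline\rho)^{(3)}$. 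So $\pi_\rho$ is a continuous surjective homomorphism onto a finite discrete topological groupoid.

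Finally I would assemble the inverse system: for $\rho\le\sigma$ in $I$ (reverse inclusion) the identity on $\overline G$ induces a continuous homomorphism $\phi_{\rho\sigma}\from\overline G/\overline\sigma\to\overline G/\overline\rho$ with $\phi_{\rho\sigma}\pi_\sigma=\pi_\rho$, so that $(\overline G/\overline\rho,\phi_{\rho\sigma})$ is an inverse system of finite discrete topological groupoids whose inverse limit $\varprojlim_{\rho\in I}\overline G/\overline\rho$ is a subgroupoid of the product groupoid, hence a topological groupoid. The maps $\pi_\rho$ induce a continuous homomorphism $\psi\from\overline G\to\varprojlim_{\rho\in I}\overline G/\overline\rho$ which, by Lemma~\ref{l:Existence} applied to $\overline G$ with the cofinal set $\{\overline\rho\mid\rho\in I\}$, is a uniform embedding with dense image; since $\overline G$ is compact its image is also closed, so $\psi$ is an isomorphism of topological directed graphs. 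Being a directed-graph isomorphism, $\psi$ preserves and reflects composability --- $(\psi(g),\psi(h))$ is composable iff $\psi(t(g))=\psi(s(h))$ iff $t(g)=s(h)$ --- and hence $\psi^{-1}$ is a homomorphism as well, so $\psi$ is an isomorphism of topological groupoids. Therefore $\overline G$ is a profinite groupoid.

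The step I expect to be the main obstacle is the middle one. The quotient of a groupoid by a congruence need not be a groupoid, since the partial product may be undefined on some composable pairs of the quotient, so the entire force of Condition~\ref{condition3} is used to guarantee, for the congruences in the base $I$, that composable pairs and triples of $\overline G/\overline\rho$ lift back to $\overline G$; granting that, the remaining steps are routine completion-and-inverse-limit bookkeeping.
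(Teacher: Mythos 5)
Your proposal is correct and follows essentially the same route as the paper: both use the congruences supplied by Condition~\ref{condition3} to equip the finite quotients with groupoid structures (the key point being that composable pairs and triples of the quotient lift, via the surjectivity $\nu_\rho[G^{(3)}]=(G/\rho)^{(3)}$, so the partial product is defined everywhere it should be and associativity descends), and then identify $\overline G$ with the inverse limit of these finite discrete groupoids using uniqueness of completions. The only cosmetic difference is that the paper builds $\widehat G=\varprojlim G/\rho$ from $G$ and maps $\overline G$ onto it, whereas you quotient $\overline G$ by $\overline\rho$ directly and invoke Corollary~\ref{c:Discrete quotients} to identify the two.
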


\begin{proof}
Construct $\widehat G$ using the standard construction of Lemma~\ref{l:Existence} with 
$I$ equal to the set of all cofinite congruences $\rho$ on $G$ such that $\nu_\rho[G^{(3)}]=(G/\rho)^{(3)}$, where $\nu_\rho\from G\to G/\rho$ is the natural map. 
By Condition~\ref{condition3}, $I$ is a base for the uniformity on $G$ and hence is indeed a cofinal set of cofinite congruences on $G$. 
Thus $\widehat G=\varprojlim_{\rho\in I} G/\rho$ is isomorphic (as cofinite directed graphs) to the completion of $G$. 
We show that the $G/\rho$, $\rho\in I$, have compatible groupoid structures so that  $\widehat G$ is also a profinite groupoid. 

For $\rho\in I$, define an involution and partial binary operation on $G/\rho$ as follows. 
Let $x,y\in G/\rho$.  Define $x^{-1}=\rho[g^{-1}]$, where $g\in G$ such that $x=\rho[g]$.
If $t(x)=s(y)$, then $(x,y,t(y))\in(G/\rho)^{(3)}$, and so there exists $g,h,k\in G^{(3)}$ such that $x=\rho[g]$, $y=\rho[h]$, and $t(y)=\rho[k]$; we define $xy=\rho[gh]$. These  operations on $G/\rho$ do not depend on the choices of representatives in $G$, since $\rho$ is a congruence on $G$, and thus are well-defined. Furthermore, the axioms of a groupoid carry over directly from $G$ (using the extra property on $\rho$ for associativity). 
Therefore, each $G/\rho$, $\rho\in I$, is a finite discrete topological groupoid. 
Moreover, by the way the operations on the  $G/\rho$ are defined, 
it is clear that if $\rho,\sigma\in I$ with $\rho\le\sigma$, then 
$\phi_{\rho\sigma}\from G/\sigma\to G/\rho$ is a homomorphism of groupoids. 
Thus $\widehat G$ is a profinite groupoid. 

Note that for $\rho,\sigma\in I$ with $\rho\le\sigma$, we have the compatibility condition $\phi_{\rho\sigma}\nu_\sigma=\nu_\rho$. 
Thus the unique continuous extensions $\overline\nu_\rho\from\overline G\to G/\rho$ and 
$\overline\nu_\sigma\from\overline G\to G/\sigma$ also satisfy $\phi_{\rho\sigma}\overline\nu_\sigma=\overline\nu_\rho$ and are homomorphisms of groupoids (by Lemma~\ref{l:condition3 implies condition2} and 
Corollary~\ref{c:Continuous extension is a homomorphism}). Hence, by the universal property of inverse limits, there is a unique homomorphism of groupoids 
$\psi\from\overline G\to\widehat G$ such that $\phi_\rho\psi=\overline\nu_\rho$ for all $\rho\in I$ (where $\phi_\rho\from\widehat G\to G/\rho$ is the canonical projection). 
Since the restriction of $\psi$ to $G$ is the canonical inclusion of $G$ into $\widehat G$, it follows by uniqueness of completions (Corollary~\ref{c:Uniqueness of completions}) that $\psi$ is an isomorphism of the underlying directed graphs. However, a bijective homomorphism of groupoids is an isomorphism of groupoids, and so $\overline G$ is isomorphic to the profinite groupoid $\widehat G$. 
\end{proof}

\begin{corollary}
If $G$ is a compact cofinite groupoid satisfying Condition~\ref{condition3}, then $G$ is a profinite groupoid. 
\end{corollary}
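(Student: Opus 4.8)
The plan is to show that a compact cofinite groupoid is already its own completion, and then quote Theorem~\ref{t:Profinite groupoid}. First I would note that Condition~\ref{condition3} implies Condition~\ref{condition2} by Lemma~\ref{l:condition3 implies condition2}, so Theorem~\ref{t:Completion of a cofinite groupoid} applies: the completion $\overline G$ is a compact Hausdorff topological groupoid containing $G$ as a dense uniform subgroupoid.

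Next I would observe that $G$, being compact, is closed in the Hausdorff space $\overline G$; since it is also dense, it equals $\overline G$. Hence the inclusion $G\hookrightarrow\overline G$ is an isomorphism of topological groupoids, i.e.\ $\overline G=G$. Because $G$ satisfies Condition~\ref{condition3}, Theorem~\ref{t:Profinite groupoid} gives that $\overline G$ is a profinite groupoid; transporting this statement back along the identification $\overline G=G$ shows that $G$ itself is a profinite groupoid.

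The corollary is thus just the compact case of Theorem~\ref{t:Profinite groupoid}, and there is no real obstacle: the only point requiring minor care is that the identification of $G$ with $\overline G$ respects the groupoid structure, which is immediate since $G$ sits inside $\overline G$ as a (dense) subgroupoid. Alternatively, one can bypass Theorem~\ref{t:Completion of a cofinite groupoid} entirely and argue as in the proof of Theorem~\ref{t:Profinite groupoid}: the canonical map $i\from G\to\widehat G=\varprojlim_{\rho\in I} G/\rho$, with $I$ the base of cofinite congruences supplied by Condition~\ref{condition3}, is a uniform embedding (Lemma~\ref{l:Existence}) whose image is dense, hence by compactness surjective; the quotients $G/\rho$ carry compatible finite discrete groupoid structures with the bonding maps $\phi_{\rho\sigma}$ homomorphisms, so $\widehat G$ is a profinite groupoid, and $G\cong\widehat G$ as groupoids since $i$ is a bijective homomorphism.
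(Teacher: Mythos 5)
Your proposal is correct and is essentially the argument the paper intends: the corollary is immediate from Theorem~\ref{t:Profinite groupoid} because a compact cofinite groupoid, being closed and dense in its Hausdorff completion, coincides with $\overline G$. Both your main route and your alternative via $\widehat G=\varprojlim G/\rho$ match the paper's reasoning.
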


The converse of this corollary is true in general (without assuming Condition~\ref{condition3}). Indeed suppose that $G=\varprojlim_{i\in I}G_i$ is the inverse limit of an inverse system of finite discrete topological groupoids $G_i$ indexed by a directed set $I$. Let $\phi_i\from G\to G_i$ be the canonical projection, for $i\in I$. 
Since each $G_i$ is finite and discrete, 
$\rho_i=\ker\phi_i=(\phi_i\times\phi_i)^{-1}(\delta_{G_i})$ is a cofinite congruence on $G$ and the set $\{\rho_i\mid i\in I\}$, is a base for the unique uniformity on $G$ (a compact Hausdorff space). Hence $G$ is a compact cofinite groupoid.

%%%%%%%%%%%%%  Cofinite groupoids with finite vertex sets

\subsection{Cofinite groupoids with finite vertex sets}

We will see that cofinite groupoids with only finitely many vertices behave very much like cofinite groups in many regards. To start we make a convenient definition. 

We say that a congruence $\rho$ on a groupoid $G$ is {\it rigid\/} if its restriction to $V(G)$ is trivial; i.e., $\rho\cap[V(G)\times V(G)]=\delta_{V(G)}$. In this case, if $(g,h)\in\rho$, then $s(g)=s(h)$ and $t(g)=t(h)$. 
In particular, when $\rho$ is rigid, the natural map $\nu\from G\to G/\rho$ has the property that $\nu[(G^{(3)}]=(G/\rho)^{(3)}$, 
and so (as in the proof of Theorem~\ref{t:Profinite groupoid}) $G/\rho$ has a unique groupoid structure such that $\nu\from G\to G/\rho$ is a homomorphism (and it is bijective on the vertex sets). 

There is an equivalent alternative approach to rigid congruences using normal subgroups. Notice that if $\rho$ is a rigid congruence on a groupoid $G$, then for each $x\in V(G)$, the congruence class $\rho[x]$ is a normal subgroup of the vertex group $G(x,x)$ and this family $(\rho[x])_{x\in V(G)}$ is {\it coherent\/} in the following sense: if $g\in G$, then 
$\rho[s(g)]^g=\rho[t(g)]$ (where $\rho[s(g)]^g=g^{-1}\rho[s(g)]g$). 
Conversely, if $(N_x)_{x\in V(G)}$ is any coherent family of normal subgroups of $G$, then there is a unique rigid congruence $\rho$ on $G$ such that $\rho[x]=N_x$ for all $x\in V(G)$; it is defined by $(g,h)\in\rho$ if and only if $s(g)=s(h)$, $t(g)=t(h)$, and 
$gh^{-1}\in N_{s(g)}$ (or equivalently, $g^{-1}h\in N_{t(g)}$). 

In order for a groupoid to have rigid congruences of finite index, its vertex set must be finite. And in this case, we make the following very useful observation. 

\begin{lemma} \label{l:Rigid base}
Let $G$ be a cofinite groupoid with a finite vertex set. Then the set of rigid cofinite congruences on $G$ is a base for its uniformity. 
\end{lemma}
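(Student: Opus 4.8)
The plan is to start from an arbitrary cofinite congruence $\rho$ on $G$ and manufacture from it a rigid cofinite congruence contained in it; since the cofinite congruences already form a base for the uniformity, producing a smaller rigid one for each $\rho$ shows the rigid ones also form a base. The key observation is that the obstruction to rigidity of $\rho$ is entirely located on the finite set $V(G)$: the restriction $\rho_V = \rho \cap [V(G)\times V(G)]$ is an equivalence relation on the finite set $V(G)$, and $\rho$ is rigid exactly when $\rho_V = \delta_{V(G)}$. So I want to ``shrink'' $\rho$ along the vertices.

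First I would fix a cofinite congruence $\rho$ on $G$ and consider, for each vertex $v\in V(G)$, the subgroup $\rho[v]\cap G(v,v)$, which by the argument in Proposition~\ref{p:Vertex groups are cofinite} is a normal subgroup of finite index in the vertex group $G(v,v)$. The idea is to build a coherent family $(N_x)_{x\in V(G)}$ of normal subgroups of the $G(x,x)$, each of finite index, with $N_x \subseteq \rho[x]\cap G(x,x)$, and then invoke the correspondence recalled just before the lemma: such a coherent family determines a unique rigid congruence $\rho'$ on $G$ with $\rho'[x]=N_x$. To get coherence I would pick one representative vertex in each $\rho_V$-class, say $v$ is chosen in its class $C$, set $N_v = \bigcap_{g} \big(\rho[t(g)]\cap G(t(g),t(g))\big)^{g^{-1}}$ intersected with $\rho[v]\cap G(v,v)$, where $g$ ranges over a finite set of arrows realizing the (finitely many) vertices of $C$ from $v$ — finiteness of $V(G)$ is what makes this a finite intersection, hence still finite index — and then transport $N_v$ to the other vertices of $C$ by $N_w = N_v^{\,h}$ for a chosen arrow $h$ from $v$ to $w$. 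One then checks coherence of $(N_x)$ directly. The resulting rigid congruence $\rho'$ has $\rho'[x]=N_x\subseteq\rho[x]$ for all vertices $x$, and because a rigid congruence forces $s,t$ to agree on related pairs, one gets $\rho'\subseteq\rho$ on all of $G$, not just on vertices (if $(g,h)\in\rho'$ then $s(g)=s(h)$, $t(g)=t(h)$, and $gh^{-1}\in N_{s(g)}\subseteq\rho[s(g)]$, whence $(g,h)\in\rho$). Finally $\rho'$ has finite index: it is rigid with finite vertex set and each local group $G(x,x)/N_x$ finite, and it is an entourage of $G$ since it contains… actually the cleanest route is: $\rho'$ has finite index and contains no cofinite entourage a priori, so instead I would verify directly that $\rho'$ is open, equivalently that each class $\rho'[g]$ is open; this follows because $\rho'[g]\supseteq$ a translate of $N_{s(g)}$-coset structure, but more simply, $\rho'$ is a congruence of finite index contained in the open set $\rho$, and one shows $\rho'$ is itself open by expressing it as a finite intersection/union of preimages under the uniformly continuous maps $s,t$ and the (partial) product of the open set $\rho$.

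Let me reorganize to make the entourage check cleaner. The cleanest argument that $\rho'$ is an entourage: consider the map $G \to (G/\rho) \times \prod_{x\in V(G)} G(x,x)/N_x$ suitably assembled; more directly, since $\rho'$ is a congruence of finite index and $\rho' \supseteq$ nothing obvious, I instead note $G/\rho'$ is finite, the natural map $\nu'\colon G\to G/\rho'$ is the composite of well-defined maps, and $\rho' = \ker\nu'$; to see it is open it suffices that $\nu'$ is continuous, which holds because $\nu'$ factors through finitely many continuous maps built from $s$, $t$, the involution, the partial product, and $\nu_\rho\colon G\to G/\rho$, all of which are (uniformly) continuous by Lemma~\ref{l:Uniformly continuous source and target} and Lemma~\ref{l:Uniformly continuous involution and partial product}. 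Once $\rho'$ is seen to be an open congruence of finite index, it is a cofinite congruence on $G$ with $\rho'\subseteq\rho$, and it is rigid by construction; since $\rho$ was an arbitrary member of a base, the rigid cofinite congruences form a base for the uniformity on $G$.

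The main obstacle I anticipate is the coherence bookkeeping in the second paragraph: defining the $N_x$ so that the family is genuinely coherent ($N_{s(g)}^{\,g} = N_{t(g)}$ for every $g\in G$, not just for the finitely many chosen arrows) requires choosing the $N_v$ at class-representatives to be invariant under the appropriate conjugations and then checking the transport formula is independent of the chosen connecting arrow — this is exactly the place where one uses that within a $\rho_V$-class all the vertex groups are ``the same up to conjugacy'' and that $V(G)$ is finite so only finitely many conjugations need to be accommodated. Everything else — finite index, openness, $\rho'\subseteq\rho$ — is routine given the earlier lemmas.
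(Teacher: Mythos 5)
Your overall strategy---shrink an arbitrary cofinite congruence $\rho$ to a rigid congruence $\rho'\subseteq\rho$ via a coherent family of normal subgroups---could in principle be pushed through, but as written it has two genuine gaps. First, the coherence bookkeeping is organized around the wrong equivalence relation. Coherence is the condition $N_{s(g)}^{\,g}=N_{t(g)}$ for \emph{every} arrow $g\in G$, so the family must be built and transported along the connected components of the underlying directed graph of $G$, not along the classes of $\rho_V=\rho\cap[V(G)\times V(G)]$. Two vertices can be $\rho_V$-equivalent with no arrow between them (so your ``chosen arrow $h$ from $v$ to $w$'' need not exist), and two vertices in the same component can lie in different $\rho_V$-classes (so your construction imposes no coherence between them at all). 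The repair is to fix a base vertex $v$ in each component, set $M=\bigcap_y h_y\bigl(\rho[y]\cap G(y,y)\bigr)h_y^{-1}$ over the finitely many vertices $y$ of that component (one arrow $h_y\in G(v,y)$ each suffices, by normality of $\rho[y]\cap G(y,y)$), and transport $M$; but that is not what you wrote.

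Second, and more seriously, you never actually prove that $\rho'$ is an entourage. Since $\rho'$ is \emph{smaller} than $\rho$, Lemma~\ref{l:Properties of cofinite entourages}(4) (which handles relations containing a cofinite entourage) is unavailable, so openness must be checked directly, and your final suggestion that $\nu'$ ``factors through finitely many continuous maps'' is a hope rather than an argument. A correct verification needs that each $G(y,z)$ is open in $G$ --- which follows from $V(G)$ being finite, hence discrete in the Hausdorff space $G$, together with continuity of $s$ and $t$ --- plus the fact that translations are homeomorphisms, so that each class $\rho'[g]=N_{s(g)}g$ is open; note you invoke finiteness of $V(G)$ only to keep intersections finite, never for this. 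All of this machinery is avoidable: the paper's proof simply uses discreteness of the finite set $V(G)$ to find, for each $x\in V(G)$, a cofinite congruence $\rho_x$ with $\rho_x[x]\cap V(G)=\{x\}$; the finite intersection $\sigma=\bigcap_{x\in V(G)}\rho_x$ is then a single rigid cofinite congruence, and $\rho\cap\sigma$ is a rigid cofinite congruence contained in any given cofinite congruence $\rho$. No coherent families, normal subgroups, or separate openness arguments are needed, since finite intersections of cofinite congruences are automatically cofinite congruences.
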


\begin{proof}
Since $G$ is Hausdorff, its finite vertex set $V(G)$ is a discrete subset of $G$. 
So for each $x\in V(G)$, there exists a cofinite congruence $\rho_x$ on $G$ such that 
$\rho_x[x]\cap V(G)=\{x\}$. 
Then the finite intersection $\sigma=\cap_{x\in V(\Gamma)}\rho_x$ is a cofinite congruence on $G$ and it is rigid. Consequently, the set of all $\rho\cap\sigma$, where $\rho$ runs through all cofinite congruences on $G$, is a base for the uniformity on $G$ consisting of rigid cofinite congruences.  
\end{proof}

Extending the way cofinite groups are defined, we next observe that every cofinite groupoid with only finitely many vertices is completely determined by its topological groupoid structure. 

\begin{proposition} \label{p:Finite vertex set}
Let $G$ be a topological groupoid with a finite vertex set. Then $G$ has a uniformity making it into a cofinite groupoid and inducing its topology if and only if each $G(x,y)$ is an open subset of $G$ and each vertex group $G(x,x)$ is a cofinite group, for $x,y\in V(G)$. Moreover, in this case, such a uniformity on $G$ is unique. 
\end{proposition}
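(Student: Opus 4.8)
The plan is to prove the two implications separately, with the ``if'' direction carrying essentially all of the work; the key tool throughout will be Lemma~\ref{l:Rigid base}, which (since $V(G)$ is finite) guarantees that any cofinite uniformity on $G$ has a base of rigid cofinite congruences, together with the bookkeeping device of coherent families of normal subgroups described just before that lemma.

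For the harder (``if'') direction, assume each $G(x,y)$ is open in $G$ and each vertex group $G(x,x)$ is a cofinite group in its subspace topology (which is a group topology, since multiplication and inversion of the topological groupoid $G$ restrict continuously to each $G(x,x)$). Since $V(G)$ is finite, $G$ has finitely many connected components, each a clopen subgroupoid. In each component $C$ I would fix a base vertex $x_0$ and, for every vertex $y$ of $C$, an arrow $p_y\in G(x_0,y)$ with $p_{x_0}=x_0$; then for each open normal subgroup $N$ of finite index in $G(x_0,x_0)$ I set $N_y=p_y^{-1}Np_y\trianglelefteq G(y,y)$. A one-line computation using only the normality of $N$ (writing $p_yg=ap_z$ with $a=p_ygp_z^{-1}\in G(x_0,x_0)$ for $g\in G(y,z)$) shows $(N_y)_y$ is coherent, hence determines a rigid congruence $\rho_N$ with $\rho_N[x]=N_x$; it has finite index because $[G(x,x):N_x]=[G(x_0,x_0):N]<\infty$ and $C$ has finitely many vertices. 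Assembling these over the components produces, for each such choice of $N$'s, a rigid congruence of finite index on $G$, still denoted $\rho_N$. Next I would check that the $\rho_N$ form a filter base (componentwise $\rho_N\cap\rho_M=\rho_{N\cap M}$, and a finite intersection of open normal finite-index subgroups is again one), so $\{\rho_N\}$ generates a uniformity $\mathcal U_0$ on $G$. It is Hausdorff because, in each component, $\bigcap_N N=\{x_0\}$ (a Hausdorff cofinite group), hence $\bigcap_N N_x=p_x^{-1}(\bigcap_N N)p_x=\{x\}$ and $\bigcap_N\rho_N=\delta_G$, so Proposition~\ref{p:Hausdorff directed graph} applies; as the $\rho_N$ are congruences, $(G,\mathcal U_0)$ is a cofinite groupoid. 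Finally, to see that $\mathcal U_0$ induces the given topology, one computes $\rho_N[g]=N_xg$ with $x=s(g)$; the conjugation $a\mapsto p_x^{-1}ap_x$ is a homeomorphism $G(x_0,x_0)\to G(x,x)$ sending the identity to $x$, so it carries the open normal finite-index subgroups of $G(x_0,x_0)$ (a neighbourhood base of the identity, as $G(x_0,x_0)$ is cofinite) onto a neighbourhood base of $x$ in $G(x,x)$; right translation $a\mapsto ag$ is a homeomorphism $G(x,x)\to G(x,t(g))$ sending $x$ to $g$; and $G(x,t(g))$ is open in $G$. Hence $\{\rho_N[g]\}=\{N_xg\}$ is a neighbourhood base at $g$ in $G$, which is exactly the required statement.

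The ``only if'' direction is short: if $\mathcal U$ makes $G$ a cofinite groupoid inducing its topology, fix a rigid cofinite congruence $\rho$ (Lemma~\ref{l:Rigid base}); each class $\rho[g]$ is clopen (Lemma~\ref{l:Properties of cofinite entourages}(1)) and, by rigidity, is contained in $G(s(g),t(g))$, so $G(x,y)=\bigcup_{g\in G(x,y)}\rho[g]$ is open; and each $G(x,x)$ is a cofinite group by Proposition~\ref{p:Vertex groups are cofinite}. For uniqueness, I would show that for \emph{any} uniformity $\mathcal U$ as in the statement, the set of its rigid cofinite congruences equals $\mathcal R=\{\,$rigid congruences of finite index on $G$ all of whose classes are open in the given topology$\,\}$, a set intrinsic to the topological groupoid $G$; since $\mathcal R$ is then a base for $\mathcal U$, this pins $\mathcal U$ down. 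One inclusion is immediate ($\mathcal U$-clopen $=$ topologically clopen). For the other, given $\rho\in\mathcal R$ with $\rho[x]=N_x$, each $N_x$ is an open neighbourhood of $x$, so choose a rigid cofinite $\mathcal U$-congruence $\sigma_x$ with $\sigma_x[x]\subseteq N_x$ for each of the finitely many $x\in V(G)$; then $\sigma=\bigcap_{x\in V(G)}\sigma_x$ is a rigid cofinite $\mathcal U$-congruence with $\sigma\subseteq\rho$ (for $(g,h)\in\sigma$, rigidity gives $s(g)=s(h)=:x$ and $gh^{-1}\in\sigma[x]\subseteq N_x=\rho[x]$), so $\rho$ contains a cofinite entourage of $\mathcal U$ and is therefore one by Lemma~\ref{l:Properties of cofinite entourages}(4), hence a rigid cofinite $\mathcal U$-congruence.

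The hard part will be the ``if'' direction: one must produce, for each small open normal finite-index subgroup of a base vertex group, a genuinely \emph{coherent} family of normal subgroups (so that it defines a congruence rather than a mere rigid equivalence relation) that is still arbitrarily small, and then identify the prescribed groupoid topology on each $G(x,y)$ with the cofinite-group topology on $G(x,x)$ transported by the right-translation and conjugation homeomorphisms available in a topological groupoid — everything else is routine filter-base and finite-intersection bookkeeping that leans on the already-established machinery.
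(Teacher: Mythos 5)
Your proposal is correct and follows essentially the same route as the paper: build rigid finite-index congruences $\rho_N$ from open normal subgroups $N$ of a base vertex group via a coherent family of conjugates, verify these form a Hausdorff filter base inducing the given topology, get the converse from Lemma~\ref{l:Rigid base} and Proposition~\ref{p:Vertex groups are cofinite}, and prove uniqueness by showing any cofinite uniformity has the same base of rigid congruences. The only differences are cosmetic (explicit choice of arrows $p_y$, and packaging the uniqueness step via an intrinsically defined family rather than mutual refinement of two bases).
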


\begin{proof}
Suppose that the $G(x,y)$ are open and that the $G(x,x)$ are cofinite groups. By taking one component at a time, we may assume that the underlying directed graph of $G$ is connected; i.e., that $G(x,y)$ is nonempty for all $x,y\in V(G)$. Fix $x\in V(G)$. Then for each open normal subgroup $N$ of $G(x,x)$, we claim that there is a unique rigid congruence of finite index $\rho_N$ on $G$ such that $\rho_N[x]=N$. We construct $\rho_N$ by forming a coherent family of normal subgroups $N_y\triangleleft G(y,y)$, $y\in V(G)$. 
For each $y\in V(G)$, choose $g\in G(x,y)$ and put $N_y=N^g$. Note that $N_y$ does not depend on the choice of $g$ since $N$ is a normal subgroup of $G(x,x)$. Let $\rho_N$ be the corresponding rigid congruence on $G$ with $\rho[y]=N_y$ for each vertex $y$. In particular, $\rho[x]=N_x=N$ and it is clear that $\rho_N$ is the unique such congruence on $G$. Furthermore, $\rho_N$ has finite index. 
Indeed, if we make a choice of $g_y\in G(x,y)$ for each $y\in V(G)$, then every congruence class is of the form $\rho_N[g_y^{-1}gg_z]=g_y^{-1}\rho_N[g]g_z=g_y^{-1}(gN)g_z$ for some $y,z\in V(G)$ and $g\in G(x,x)$. Thus, if $G$ has $n$ vertices, then the index of $\rho_N$ is $n^2$ times the index of $N$ in $G(x,x)$. For later use, also note that each congruence class of $G$ by $\rho_N$ is an open subset of $G$ since the cosets $gN$ are open in $G(x,x)$ and we are assuming that each $G(x,y)$ is open in $G$.   

Note that $B=\{\rho_N\mid\hbox{$N$ is an open normal subgroup of $G(x,x)$}\}$ is a base for a uniformity on $G$. Indeed, if $N_1$ and $N_2$ are open normal subgroups of $G(x,x)$, then so is $N_1\cap N_2$ and $\rho_{N_1\cap N_2}=\rho_{N_1}\cap\rho_{N_2}$. We claim that this uniformity induces the topology on $G$. Let $U$ be an open subset of $G$ and let $g\in U$. 
Put $y=s(g)$ and $z=t(g)$ so that $g\in G(y,z)$. Choose $g_y\in G(x,y)$ and $g_z\in G(x,z)$. Then $g_ygg_z^{-1}\in G(x,x)$. Since we are assuming that $G(y,z)$ is open in $G$, we may assume that $U\subseteq G(y,z)$ and so $g_yUg_z^{-1}$ is an open subset of $G(x,x)$ containing $g_ygg_z^{-1}$. However $G(x,x)$ is a cofinite group, and so there exists an open normal subgroup $N$ such that $g_ygg_z^{-1}N\subseteq g_yUg_z^{-1}$. 
Since $g_ygg_z^{-1}N=\rho_N[g_ygg_z^{-1}]=g_y\rho_N[g]g_z^{-1}$, it follows that 
$\rho_N[g]\subseteq U$ and hence $U$ is open in the uniform topology generated by our base~$B$. 
On the other hand, as previously noted, every congruence class $\rho_N[g]$, where $N$ is an open normal subgroup of $G(x,x)$ and $g\in G$, is open in $G$.  Hence the uniform topology generated  by $B$ is equal to the topology on $G$, as claimed.

We have seen that the topology on $G$ is generated by a uniformity with a base consisting of congruences of finite index on $G$. It remains to check that $G$ is Hausdorff. But this follows easily by noting that $G$ is the disjoint union of the open subsets $G(y,z)$, $y,z\in V(G)$, and each $G(y,z)$ is homeomorphic to $G(x,x)$ which is Hausdorff (being a cofinite group). 

Conversely assume that $G$ has a cofinite groupoid structure. 
By Proposition~\ref{p:Vertex groups are cofinite}, each vertex group $G(x,x)$ is a cofinite group. By Lemma~\ref{l:Rigid base}, there exists a rigid cofinite congruence $\sigma$ on $G$. Then for all $g\in G(x,y)$, we see that $\sigma[g]\subseteq G(x,y)$, and so each $G(x,y)$ is open in $G$.

For the last part, consider any cofinite groupoid structure on $G$ that induces its topology. 
By Lemma~\ref{l:Rigid base},  this uniformity on $G$ has a base, say $C$, consisting of rigid congruences of finite index on $G$. Given any $\rho\in C$ and $x\in V(G)$, the congruence class $N=\rho[x]$ is an open normal subgroup of $G(x,x)$. By the uniqueness of such a rigid congruence, $\rho_N=\rho\subseteq\rho$ and $\rho_N\in B$. On the other hand, given any open normal subgroup $N$ of $G(x,x)$, there exists $\rho\in C$ such that $\rho[x]\subseteq N$ (since $N$ is also open in $G$ and $x\in N$). Now $\rho$ and $\rho_N$ are rigid congruences with $\rho[x]\subseteq\rho_N[x]$, and thus $\rho\subseteq\rho_N$. 
It follows that $C$ and $B$ are bases for the same uniformity on $G$, and hence $G$ has a unique cofinite groupoid structure that induces it topology. 
\end{proof}

Viewing groups as groupoids with a single vertex, we obtain the following result as an immediate corollary. 

\begin{corollary}
A Hausdorff topological group $G$ is a cofinite group if and only if there is a uniformity on $G$ with a base consisting of congruences of finite index on $G$ that induces the topology on $G$. Moreover, in this case, such a uniformity on $G$ is unique. 
\end{corollary}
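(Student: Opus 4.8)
The plan is to deduce this corollary directly from Proposition~\ref{p:Finite vertex set}, by regarding the group $G$ as a groupoid with a single vertex, namely its identity element; then $V(G)$ is a one-point set, in particular finite, so the proposition applies.

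First I would translate the right-hand side of Proposition~\ref{p:Finite vertex set} into this special case. The only pair of vertices is $(1,1)$, and $G(1,1)=G$, so the requirement that every $G(x,y)$ be an open subset of $G$ holds trivially, while the requirement that every vertex group $G(x,x)$ be a cofinite group becomes simply the requirement that $G$ itself be a cofinite group. Thus, for a one-vertex groupoid, the right-hand condition of Proposition~\ref{p:Finite vertex set} is precisely that $G$ is a cofinite group.

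Next I would match up the left-hand side. As noted following Proposition~\ref{p:Vertex groups are cofinite}, cofinite groupoids with a single vertex are exactly cofinite groups; and on a group a congruence in the groupoid sense is exactly the coset equivalence relation of a normal subgroup, with finite index corresponding to finite index. Moreover, if a uniformity on $G$ has a base consisting of congruences of finite index, then each member of that base is by definition an entourage, hence a cofinite congruence, so having such a base is the same as having a base of cofinite congruences; and since $G$ is assumed Hausdorff and the uniformity induces its topology, that uniformity is Hausdorff. Hence ``there is a uniformity on $G$ with a base consisting of congruences of finite index that induces the topology on $G$'' is equivalent to ``there is a uniformity making $G$ into a cofinite groupoid and inducing its topology'', which is the left-hand condition of Proposition~\ref{p:Finite vertex set}.

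With these two translations in place, both the asserted equivalence and the uniqueness of the uniformity follow immediately from Proposition~\ref{p:Finite vertex set}. I do not anticipate any genuine obstacle here; the only care needed is the routine bookkeeping of the dictionary between groups and one-vertex groupoids (congruences versus normal subgroups, and the fact that the single ``hom-set'' condition is vacuous).
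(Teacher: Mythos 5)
Your proposal is correct and is exactly the paper's argument: the corollary is stated as an immediate consequence of Proposition~\ref{p:Finite vertex set} by viewing $G$ as a groupoid with a single vertex, which is precisely the specialization you carry out. The extra bookkeeping you supply (the vacuity of the openness condition on the single hom-set, the identification of finite-index congruences on a group with finite-index normal subgroups, and the observation that basis elements of the uniformity are automatically entourages) is exactly the routine dictionary the paper leaves implicit.
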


An immediate consequence of Lemma~\ref{l:Rigid base} is that a cofinite groupoid $G$ with a finite vertex set satisfies Condition~\ref{condition3}, and so also Conditions~\ref{condition1} and~\ref{condition2}. Hence Theorems~\ref{t:Completion of a cofinite groupoid} 
and~\ref{t:Profinite groupoid} can be applied in proving the following result.

\begin{theorem} \label{t:Finite vertex set}
Let $G$ be a cofinite groupoid with a finite vertex set. Then its completion $\overline G$ is a profinite groupoid with $V(\overline G)=V(G)$. 
Moreover, the rigid cofinite congruences on $\overline G$ consist precisely of the closures 
$\overline\rho$ of all rigid cofinite congruences $\rho$ on $G$. 
\end{theorem}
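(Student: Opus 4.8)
The plan is to assemble this statement from the machinery already developed, exploiting the fact that a cofinite groupoid with finite vertex set satisfies Condition~\ref{condition3}. First I would recall, via Lemma~\ref{l:Rigid base}, that the rigid cofinite congruences on $G$ form a base for its uniformity. For a rigid congruence $\rho$, the natural map $\nu\from G\to G/\rho$ is a bijection on vertex sets, so every composable triple of $G/\rho$ is the image of a composable triple of $G$; that is, $\nu[G^{(3)}]=(G/\rho)^{(3)}$. Hence $G$ satisfies Condition~\ref{condition3} (as already observed just above the statement), and Theorem~\ref{t:Profinite groupoid} gives that $\overline G$ is a profinite groupoid.

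For the vertex set, Lemma~\ref{l:Completion of a sub-directed graph} yields $V(\overline G)=\overline{V(G)}$; since $V(G)$ is finite and $\overline G$ is Hausdorff, $V(G)$ is already closed in $\overline G$, so $\overline{V(G)}=V(G)$ and thus $V(\overline G)=V(G)$.

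For the last assertion, Condition~\ref{condition3} implies Condition~\ref{condition2} by Lemma~\ref{l:condition3 implies condition2}, so Theorem~\ref{t:Completion of a cofinite groupoid} applies and tells us that the cofinite congruences on $\overline G$ are precisely the closures $\overline\rho$ of the cofinite congruences $\rho$ on $G$. It then suffices to show that $\overline\rho$ is rigid if and only if $\rho$ is rigid. Using $V(\overline G)=V(G)\subseteq G$ together with $\overline\rho\cap(G\times G)=\rho$ from Lemma~\ref{l:Closure of R}, one computes
$$
\overline\rho\cap\bigl(V(\overline G)\times V(\overline G)\bigr)=\overline\rho\cap\bigl(V(G)\times V(G)\bigr)=\rho\cap\bigl(V(G)\times V(G)\bigr),
$$
so the left-hand side equals $\delta_{V(\overline G)}$ exactly when $\rho\cap(V(G)\times V(G))=\delta_{V(G)}$, i.e.\ exactly when $\rho$ is rigid. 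Combined with the previous paragraph's description of all cofinite congruences on $\overline G$, this gives the claimed identification of the rigid ones.

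I do not anticipate a genuine obstacle here: Theorems~\ref{t:Profinite groupoid} and~\ref{t:Completion of a cofinite groupoid} carry essentially all the weight, and the role of the finite vertex set is only to supply a rigid base via Lemma~\ref{l:Rigid base}. The one point requiring care is checking that rigidity is both preserved and reflected under passage to closures, and this reduces cleanly to the identity $\overline\rho\cap(G\times G)=\rho$ together with the fact that completion does not enlarge the finite vertex set.
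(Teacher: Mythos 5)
Your proposal is correct and follows essentially the same route as the paper: Condition~\ref{condition3} via Lemma~\ref{l:Rigid base}, then Theorem~\ref{t:Profinite groupoid} for profiniteness, Lemma~\ref{l:Completion of a sub-directed graph} plus finiteness for $V(\overline G)=V(G)$, and Theorem~\ref{t:Completion of a cofinite groupoid} for the congruences. Your explicit computation $\overline\rho\cap\bigl(V(\overline G)\times V(\overline G)\bigr)=\rho\cap\bigl(V(G)\times V(G)\bigr)$ just spells out the step the paper dismisses as ``clear.''
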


\begin{proof}
By Theorem~\ref{t:Profinite groupoid}, $\overline G$ is a profinite groupoid. 
By Lemma~\ref{l:Completion of a sub-directed graph}, 
$V(\overline G)=\overline{V(G)}=V(G)$ (since a finite subset of a Hausdorff space is closed). 
The remaining part follows from Theorem~\ref{t:Completion of a cofinite groupoid} since when $V(\overline G)=V(G)$, it is clear that a cofinite congruence $\rho$ on $G$ is rigid if and only if its closure $\overline\rho$ is rigid.  
\end{proof}

\bibliographystyle{amsplain}

\end{document}